\numberwithin{equation}{section}
\newtheorem{theorem}{Theorem}[section]
\newtheorem{lemma}[theorem]{Lemma}
\newtheorem{proposition}[theorem]{Proposition}
\newtheorem{question}{Question}
\newtheorem{bigthm}{Theorem}
\newenvironment{customthm}[1]
  {\innercustomthm}
  {\endinnercustomthm}
\newcommand{\ignor}[1]{} 
\DeclareMathOperator{\Ker}{Ker}
\let\leq=\leqslant
\let\geq=\geqslant
\begin{document}


\title[Profinite groups in which  centralizers are abelian]
{Profinite groups  in which  centralizers are abelian}

\author{Pavel Shumyatsky}
\address{Department of Mathematics, University of Brasilia, Brazil}
\email{pavel@unb.br, pz@mat.unb.br, \textrm{and} zapata@mat.unb.br}
\author{Pavel Zalesskii}
\author{Theo Zapata}

\keywords{Profinite groups, CA-groups, CT-groups}

\subjclass[2010]{Primary 20E18}


\maketitle


\begin{abstract}
The article deals with profinite groups in which the centralizers are abelian 
(CA-groups), that is, with profinite commutativity-transitive groups. 
It is shown that such groups are virtually pronilpotent. 
More precisely, let $G$ be a profinite CA-group. 
It is shown that $G$ has a normal open subgroup $N$ which is either abelian or pro-$p$. 
Further, a rather detailed information about the finite quotient $G/N$ is obtained.
\end{abstract}



\baselineskip 16.0pt

\section{Introduction}
\label{s:intro}

The simple concept of the transitivity of commutation among nontrivial elements have always attracted a lot of interest in Group Theory, and even in other areas of  Algebra. 
The condition is equivalent to the fact that all centralizers in a group are abelian. 
Thus groups with this property are called CA-groups, or CT-groups.

Finite (and locally finite) CA-groups were classified by several mathematicians from 1925 to 1998.
First, finite CA-groups were shown to be simple or solvable by Weisner \cite{Weisner:25}.
Then in the Brauer--Suzuki--Wall theorem \cite{BSW:58} finite CA-groups of even order were shown to be Frobenius groups, abelian groups, or two dimensional projective special linear groups over a finite field of even order, $\textup{PSL}(2,2^m)$ for $m\geq2$.
We highlight the work of M.~Suzuki \cite{Suzuki:57} which established that simple CA-groups have even order.
It should be mentioned that Suzuki's work 
was an important precursor of 
the outstanding Feit--Thompson theorem \cite{FT:63} on the solubility of groups of odd order.
A detailed description of (locally) finite Frobenius groups appearing as CA-groups was obtained by Yu-Fen~W. \cite{Wu:98}.

The classification of finite CA-groups is the following
(\textit{cf}. \protect{\cite[Ch.~6,~\S 2, Examples 1 and 2]{Suzuki:86}}):

\begin{theorem} \label{t:finite_CA}
A finite CA-group is either abelian, or a Frobenius group with abelian kernel and cyclic complement,
or isomorphic to $\textup{PSL}(2,2^m)$ for some integer~$m\geq 2$.
\end{theorem}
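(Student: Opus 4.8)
The plan is to split the argument according to whether $G$ is solvable: the solvable case will be handled by elementary structure theory, while for the remaining case I will invoke the deep classical results recalled above.

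For the solvable case the starting observation is that every nilpotent subgroup of a CA-group is abelian: if $N$ is nilpotent and nontrivial, pick $1 \neq z \in Z(N)$; then $N \le C_G(z)$, which is abelian by hypothesis. Applied to the Fitting subgroup this gives that $F := F(G)$ is abelian, and since $G$ is solvable one has $C_G(F) = F$. Now fix $1 \neq x \in F$. As $F$ is abelian, $F \le C_G(x)$; conversely any $g \in C_G(x)$ centralizes the whole of $F$ (because $C_G(x)$ is abelian and contains $F$), so $g \in C_G(F) = F$. Hence $C_G(x) = F$ for every nontrivial $x \in F$, which is precisely the defining property of a Frobenius group with kernel $F$, provided $F \neq G$; and the degenerate case $F = G$ forces $G$ nilpotent, hence abelian, yielding the first alternative.

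It remains to prove that the complement $H$ is cyclic. As a subgroup of $G$, $H$ is again a CA-group, and as a Frobenius complement its Sylow subgroups are cyclic or generalized quaternion. The quaternion possibility is ruled out, since the centralizer of the central involution would then contain a nonabelian subgroup; thus every Sylow subgroup of $H$ is cyclic and, by the classical structure theorem for such groups, $H = \langle a\rangle \rtimes \langle b\rangle$ is metacyclic with $\langle a\rangle$ cyclic and normal. If $H$ were nonabelian then $b$ would act nontrivially on $\langle a\rangle$, and the cyclicity of $\langle a\rangle$ lets one extract a nonabelian---hence non-cyclic---subgroup of order $pq$ for suitable primes $p,q$; this contradicts the fact that every subgroup of order $pq$ in a Frobenius complement is cyclic. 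Therefore $H$ is abelian with cyclic Sylow subgroups, i.e.\ cyclic, giving the second alternative.

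For the non-solvable case I would quote the results recalled in the introduction. By Weisner \cite{Weisner:25} a finite CA-group is simple or solvable, so here $G$ is simple; by Suzuki \cite{Suzuki:57} a simple CA-group has even order; and by the Brauer--Suzuki--Wall theorem \cite{BSW:58} an even-order CA-group is a Frobenius group, abelian, or $\mathrm{PSL}(2,2^m)$. A nonabelian simple group is neither abelian nor Frobenius (a Frobenius group has a proper nontrivial normal kernel), so $G \cong \mathrm{PSL}(2,2^m)$ with $m \ge 2$. The \emph{main obstacle} is exactly this non-solvable case: identifying the simple CA-groups rests on Suzuki's even-order theorem and on the character-theoretic analysis underlying Brauer--Suzuki--Wall, both genuinely deep---Suzuki's theorem being a celebrated precursor of Feit--Thompson---whereas the solvable case reduces, as above, to a short computation with the Fitting subgroup.
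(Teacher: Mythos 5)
Your proposal is correct, but note that the paper itself does not prove \hyperref[t:finite_CA]{Theorem~\ref{t:finite_CA}} at all: it quotes the classification as known, with a pointer to \cite[Ch.~6, \S 2, Examples 1 and 2]{Suzuki:86}, and the underlying ingredients (Weisner \cite{Weisner:25}, Suzuki \cite{Suzuki:57}, Brauer--Suzuki--Wall \cite{BSW:58}) are only recalled as history in the introduction. What you do differently is to supply a genuine, self-contained argument for the solvable half: nilpotent subgroups of a CA-group are abelian, so $F=F(G)$ is abelian; solvability gives $C_G(F)\le F$, whence $C_G(x)=F$ for every $1\neq x\in F$, which exhibits $G$ as a Frobenius group with abelian kernel $F$ (or forces $G=F$ abelian); cyclicity of the complement then follows because quaternion Sylow subgroups are killed by the CA-condition, and a nonabelian group with all Sylow subgroups cyclic would contain a nonabelian subgroup of order $pq$, contradicting the classical fact that such subgroups of a Frobenius complement are cyclic. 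This confines the unavoidable deep input (Suzuki's odd-order theorem and Brauer--Suzuki--Wall) to the simple case, exactly the results the paper cites; your version buys a transparent elementary reduction that the paper delegates entirely to the literature. Two steps in your write-up should be flagged as theorems rather than definitions: the passage from ``$F\trianglelefteq G$ proper nontrivial with $C_G(x)\le F$ for all $1\ne x\in F$'' to ``$G$ is Frobenius with kernel $F$'' is not the definition but a standard consequence of Schur--Zassenhaus, where the needed coprimality of $|F|$ and $|G/F|$ does follow from your hypothesis (if $p$ divided both, a nontrivial element of $F\cap Z(P)$ for $P$ a Sylow $p$-subgroup of $G$ would have centralizer containing $P$, forcing $P\le F$); and the structure facts about Frobenius complements (Sylow subgroups cyclic or generalized quaternion; subgroups of order $pq$ cyclic) are theorems of Burnside--Zassenhaus and should be cited as such. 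With those references in place, your argument is complete.
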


The current importance of
infinite CA-groups
is in its deep relation with residually free groups. Namely, finitely generated residually  free CA-groups are limit groups that played the key role in the solutions of Tarski problems. In fact, each centralizer in these groups is also malnormal 
(\textit{i.e.}, a conjugately separated subgroup in the sense that
its conjugate by any element outside the subgroup intersects it trivially), 
such groups are called \textit{CSA-groups}.
Amongst examples of infinite CA-groups 
we mention:
free groups,
torsion-free hyperbolic groups,
certain subgroups of $\textup{PSL}(2,\mathbb{C})$
(notably, fuchsian and kleinian groups),
free Burnside groups for large odd exponent,
free soluble groups,
Tarski monsters,
one-relator torsion groups, 
and certain small cancellation groups
(see \cite[p.~10]{LS:77}, \cite{Wu:98}, and \cite{FR:08} for references).




The objective of this paper is the study of profinite CA-groups. The main result of the paper states that such profinite groups possess either pro-$p$ or abelian subgroup of finite index.

\begin{bigthm} \label{t:virt_pronilp_intro}
A profinite CA-group is virtually abelian or virtually pro-$p$. 
\end{bigthm}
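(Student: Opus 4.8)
The plan is to reduce the whole theorem to the single assertion that $G$ is \emph{virtually pronilpotent}, and then to exploit how sharply commutativity transitivity interacts with the Sylow decomposition of a pronilpotent profinite group. Two elementary consequences of the CA hypothesis drive everything. First, a nontrivial element whose centralizer contains a nonabelian subgroup is impossible, since every centralizer $C_G(x)$ with $x\neq 1$ must be abelian. In particular, if $Z(G)\neq 1$ then for $1\neq z\in Z(G)$ we have $C_G(z)=G$, forcing $G$ abelian; hence a \emph{nonabelian} CA-group is centerless, and I may assume $Z(G)=1$. Second, and decisively, a CA-group can contain no subgroup of the form $X\times Y$ with $X$ nonabelian and $Y\neq 1$: picking $1\neq b\in Y$ gives $X\leq C_G(b)$, which then fails to be abelian. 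This ``no commuting nonabelian direction'' principle is the engine used to forbid independent nonabelian sections.

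Next I would settle the pronilpotent case, which also explains the dichotomy in the conclusion. If $G$ is pronilpotent then $G=\prod_p G_p$ is the product of its pro-$p$ Sylow subgroups. Suppose $G_p\neq 1\neq G_q$ for distinct primes $p,q$. Choosing any $1\neq b\in G_q$, every element of $G_p$ centralizes $b$, so $G_p\leq C_G(b)$; as $C_G(b)$ is abelian, $G_p$ is abelian, and likewise every Sylow subgroup is abelian, whence $G$ is abelian. If instead only one prime occurs, $G$ is pro-$p$. Thus a pronilpotent CA-group is either abelian or pro-$p$. Consequently it suffices to prove that $G$ is virtually pronilpotent: an open normal pronilpotent subgroup $M$ is again a closed, hence CA, subgroup, so $M$ is abelian or pro-$p$ by the previous line, which is exactly the statement of the theorem.

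The heart of the matter is therefore the virtual pronilpotency of an arbitrary profinite CA-group $G$, and here I would pass to finite quotients and invoke Theorem~\ref{t:finite_CA}. The essential difficulty is that the CA property is \emph{not} inherited by quotients, so I cannot simply declare each $G/N$ to be a finite CA-group; instead I would work with the upper composition factors of $G$. By the ``no commuting nonabelian direction'' principle, two distinct nonabelian simple sections can never centralize one another, which, combined with Theorem~\ref{t:finite_CA} (whose only nonabelian simple instances are the groups $\mathrm{PSL}(2,2^m)$), should bound the nonabelian simple sections and let me pass to an open normal subgroup that is prosoluble. For a prosoluble CA-group I would use the partition of $G\setminus\{1\}$ by its maximal abelian closed subgroups — which, since $G$ is centerless and CA, are exactly the centralizers $C_G(x)$ and meet pairwise trivially — to recover a Frobenius-type structure with an abelian kernel, and then argue that this kernel may be taken open, giving virtual abelianity.

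I expect the main obstacle to be precisely this last passage: controlling, inside the inverse limit, how the finite pieces of Theorem~\ref{t:finite_CA} accumulate. The two delicate points are (i) ruling out that infinitely many distinct $\mathrm{PSL}(2,2^m)$ sections, or mixed-prime Frobenius sections, pile up in $G$ in spite of the non-quotient-closedness of CA, and (ii) upgrading the abelian Frobenius kernel from a merely closed normal subgroup to an \emph{open} one, so that $G/N$ is genuinely finite. Both are compactness and accumulation statements about the profinite topology rather than facts about a single finite group, and I would expect them to require exactly the careful bookkeeping of Sylow subgroups and their normalizers to which the remainder of the paper is presumably devoted.
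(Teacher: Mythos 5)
Your reduction is sound and matches the paper's own first move: a pronilpotent CA-group is abelian or pro-$p$ (the Sylow decomposition argument you give is exactly the one used inside the paper's Lemma~\ref{l:main}), so the theorem is equivalent to the assertion that a profinite CA-group is virtually pronilpotent. But that assertion is the entire content of the theorem, and your sketch of it contains a step that is not merely incomplete but false. You propose to pass to an open prosoluble subgroup and then, for a prosoluble CA-group, to ``recover a Frobenius-type structure with an abelian kernel'' from the partition of $G\setminus\{1\}$ by maximal abelian closed subgroups, concluding virtual abelianity. A nonabelian free pro-$p$ group refutes this: it is prosoluble (being pro-$p$), it is CA (centralizers of nontrivial elements are procyclic), its maximal abelian subgroups do partition the nontrivial elements, and yet it has no nontrivial abelian normal subgroup at all, is not a Frobenius group, and is not virtually abelian. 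A partition into abelian subgroups does not produce a Frobenius kernel; the correct conclusion in the prosoluble case must allow ``virtually pro-$p$'', and nothing in your outline separates the genuinely pro-$p$ phenomena from the Frobenius ones.

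The other half of your plan --- bounding the nonabelian simple sections by the observation that a CA-group contains no subgroup $X\times Y$ with $X$ nonabelian and $Y\neq 1$ --- is also insufficient: that principle forbids \emph{commuting} simple sections, but not infinitely many simple sections stacked in a subnormal chain, and by itself it does not yield an open prosoluble subgroup. The paper's actual route to virtual pronilpotency is considerably heavier: the universal Frattini cover lemma (Lemma~\ref{l:main}) producing, over any quotient, a subgroup with pronilpotent kernel; a technical centralizer lemma (Lemma~\ref{l:normaliz-centraliz}); a bound of $5$ on the Fitting height of prosoluble CA-groups via basis normalizers and Zelmanov's theorem (Lemmas~\ref{l:Fh3} and~\ref{fitting}); the Khukhro--Shumyatsky bound on nonsoluble length combined with Wilson's structure theorems, giving a characteristic series of length at most $35$ whose factors are pronilpotent or Cartesian products of nonabelian finite simple groups (Lemma~\ref{series}); and finally an induction along that series, using Lemma~\ref{l:metapronilp} and an argument with $p'$-elements to exclude infinitely many simple factors. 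None of this machinery is present in, or recoverable from, your proposal --- you acknowledge as much at the end --- so what you have is the correct (and easy) reduction, followed by a plan whose prosoluble half is wrong as stated and whose nonsoluble half is a hope rather than an argument.
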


The world of pro-$p$ CA-groups is very large and very far from being described.
Kochloukova and the second author introduced in \cite{KZ:11}  a pro-$p$ analogue of limit groups via the operation of extension of centralizers which are pro-$p$ CA-groups. 
The question of description of residually free pro-$p$ CA-groups is still open (it is proved in \cite{KZ:10} that the pro-$p$ completions of surface group are within this class). 
One can see also from results of Wilkes \cite{Wilkes:17} that the pro-$p$ completion of many $3$-manifold groups are CA.  In fact, all these pro-$p$ groups are also CSA-groups.
Thus the study of pro-$p$ CA-groups as well as CSA-groups is a separate quite challenging topic: 
we believe however that the 
description of CA-groups in the class of analytic pro-$p$ groups
is achievable and would be a valuable addition to this topic.  

We prove here that profinite non-abelian CSA-groups are pro-$p$-by-cyclic. Following the standard notation in finite group theory we denote by $O_p(G)$ the intersection of all Sylow pro-$p$ subgroups of the profinite group $G$, \textit{i.e.}, the maximal normal pro-$p$ subgroup of $G$ (open in our situation below).

\begin{bigthm}\label{t:CSA_intro} 
Let $G$ be a profinite CSA-group. Then  one of the following holds:

\begin{enumerate} 

\item[(i)] $G$ is abelian;

\item[(ii)] G is pro-$p$;

\item[(iii)] for some prime $p$ the quotient group  $G/O_p(G)$ is  cyclic of order coprime to~$p-1$.

\end{enumerate}
\end{bigthm}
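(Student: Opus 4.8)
The plan is to combine Theorem~A with the classification of finite CA-groups (Theorem~\ref{t:finite_CA}) and a few structural consequences of malnormality, and then to extract the cyclic quotient by a fixed-point/eigenvalue analysis of the coprime action on the pro-$p$ radical. I would first record three preliminary facts. \emph{(a) Every closed subgroup of a CSA-group is CSA:} if $M=C_G(x)$ is a maximal abelian subgroup and $H\le G$, then for $h\in H\setminus(M\cap H)$ one has $h\notin M$, whence $(M\cap H)^h\cap(M\cap H)\le M^h\cap M=1$; and every maximal abelian subgroup of $H$ has the form $M\cap H$. \emph{(b) A finite CSA-group is abelian:} by Theorem~\ref{t:finite_CA} it is abelian, Frobenius with abelian kernel and cyclic complement, or $\mathrm{PSL}(2,2^m)$; the kernel of a Frobenius group is a nontrivial normal centralizer, hence not malnormal, and in $\mathrm{PSL}(2,2^m)$ a Sylow $2$-subgroup is a self-centralizing maximal abelian subgroup that is not self-normalizing, so the last two are excluded. \emph{(c) A nonabelian CSA-group has no nontrivial normal abelian subgroup:} if $1\ne A\trianglelefteq G$ is abelian and $A\le M$ with $M$ maximal abelian, then $A=A^g\le M\cap M^g$ for all $g$, so malnormality forces $G=M$ abelian.

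Granting these, I dispose of the easy alternatives. Since CSA implies CA, Theorem~A yields an open normal subgroup that is abelian or pro-$p$. If $G$ is abelian we are in case~(i); if $G$ is finite it is abelian by fact~(b), again case~(i). Otherwise fact~(c) shows that an open normal abelian subgroup would force $G$ abelian, so $G$ is infinite and virtually pro-$p$, and I set $P:=O_p(G)$, an open normal pro-$p$ subgroup. By fact~(c) $P$ is nonabelian, and since a nonabelian CA-group has trivial centre (a central element has abelian centralizer equal to the whole group) we get $Z(P)=1$; moreover if $1\ne z\in C_G(P)$ then $P\le C_G(z)$ would be abelian, so $C_G(P)=1$ and $G$ embeds into $\mathrm{Aut}(P)$, with $Q:=G/P$ acting faithfully on $P$. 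It remains to prove that $Q$ is cyclic of order coprime to $p-1$; the case $Q=1$ is then case~(ii) and $Q\ne1$ is case~(iii).

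The engine is the following use of commutativity-transitivity. Let $A_0=\langle c_1,c_2\rangle\cong(\mathbb Z/\ell)^2$ with $\ell\ne p$; if such a subgroup occurs in $Q$ it lifts to $G$, since a Sylow pro-$\ell$ subgroup is finite and maps isomorphically into $Q$. As $A_0$ is abelian, all $C_G(c)$ with $1\ne c\in A_0$ equal a single maximal abelian $M\supseteq A_0$; hence if $a\in C_P(c_1)$ then $a$ commutes with $c_1$ and $c_1$ commutes with $c_2$, so by transitivity $a$ commutes with $c_2$. Thus $C_P(c)=C_P(A_0)$ for all the $\ell+1$ subgroups of order $\ell$, and coprime generation $P=\langle C_P(c):1\ne c\in A_0\rangle=C_P(A_0)$ gives $A_0\le C_G(P)=1$, a contradiction. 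Therefore no finite $p'$-subgroup of $G$ contains $(\mathbb Z/\ell)^2$; being also CSA, hence abelian by fact~(b), every finite $p'$-subgroup is cyclic, so each Sylow $\ell$-subgroup of $Q$ with $\ell\ne p$ is cyclic. A separate argument, again via fixed points of $p$-elements together with transitivity (a nontrivial $p$-element outside $P$ centralizes exactly a maximal abelian subgroup $C_P(a)$ of $P$), shows $p\nmid|Q|$; then Schur--Zassenhaus gives $G=P\rtimes C$ with $C\cong Q$ a finite $p'$-subgroup, so $C$, and hence $Q$, is cyclic.

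The hard part is the coprimality $\gcd(|Q|,p-1)=1$, and this is where I expect the real work. Suppose a prime $\ell$ divides $\gcd(|Q|,p-1)$ and let $c_0\in C$ have order $\ell$. Because $\ell\mid p-1$, the ring $\mathbb Z_p$ contains a primitive $\ell$-th root of unity $\mu$, and $c_0$ acts nontrivially on $P/\Phi(P)$ (otherwise it would act trivially on $P$, contradicting $C_G(P)=1$) with an eigenvalue $\equiv\mu\not\equiv1\pmod p$. The crucial technical step is to promote this eigenvector modulo $\Phi(P)$ to a genuine eigen-element: descending the lower central series of $P$ and invoking the vanishing of coprime cohomology at each abelian factor, one should obtain $1\ne x\in P$ with $x^{c_0}=x^{\mu}$. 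Then $D:=\overline{\langle x\rangle}$ is a nontrivial $c_0$-invariant abelian subgroup on which $c_0$ acts nontrivially (as $\mu\not\equiv1\pmod p$), so $D\rtimes\langle c_0\rangle$ is a nonabelian CSA-group with nontrivial normal abelian subgroup $D$, contradicting fact~(c). The main obstacle is precisely this eigen-lifting inside the nonabelian pro-$p$ group $P$: one must control the commutator corrections along the lower central series, and it is exactly here that the hypothesis $\ell\mid p-1$ (the existence of an $\mathbb F_p$-rational, hence scalar, eigenvalue) is indispensable, since otherwise there need be no invariant \emph{abelian} subgroup on which $c_0$ acts as a scalar.
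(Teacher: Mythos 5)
Your overall skeleton matches the paper's: reduce to the virtually pro-$p$ case via Theorem~\ref{t:virt_pronilp_intro} and the fact that a CSA-group with a nontrivial normal abelian subgroup is abelian, then prove $Q=G/O_p(G)$ is cyclic, then get coprimality to $p-1$ by exhibiting a nonabelian subgroup of $G$ with a nontrivial normal abelian subgroup. Your facts (a)--(c) are correct (the paper quotes \cite[Thm.~3.5]{FGRS:16} for (b)), and your ``engine'' showing Sylow $\ell$-subgroups of $Q$ are cyclic for $\ell\neq p$ is sound (it is essentially Lemma~\ref{v4}, via Lemma~\ref{cc}(iv)). However, there are two genuine gaps. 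The first is your assertion that $p\nmid|Q|$, which you need in order to invoke Schur--Zassenhaus and realize $Q$ as a finite subgroup $C\leq G$ --- this realization is indispensable for you, since facts (a) and (b) apply to subgroups, not quotients, and quotients of CSA-groups need not be CSA. You justify $p\nmid|Q|$ with a single vague sentence about fixed points of $p$-elements; this is not an argument, and none of this kind is readily available: a torsion $p$-element outside $P$ need not exist (the preimage of a Sylow $p$-subgroup of $Q$ may be torsion-free), and the action of the $p$-part of $Q$ on $P/\Phi(P)$ is not coprime, so Lemma~\ref{cc}(v) says nothing about it. In the paper, coprimality of $|Q|$ to $p$ is never proved directly; it falls out \emph{after} cyclicity (a cyclic $Q$ has characteristic Sylow $p$-subgroup, whose preimage would be a normal pro-$p$ subgroup properly containing $O_p(G)$), and cyclicity itself is obtained from Theorem~\ref{t:virt_pro-p_intro} --- whose hypothesis of prosolubility is secured by a minimal-counterexample argument --- together with the Frattini argument and fact (c). Your plan inverts this logical order and leaves the pivot unproved.

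The second gap you flag yourself, and it is the decisive one: the ``eigen-lifting'' of a nontrivial eigenvector of $c_0$ on $P/\Phi(P)$ to a nontrivial element $x\in P$ generating a $c_0$-invariant procyclic subgroup on which $c_0$ acts nontrivially. Writing that ``descending the lower central series and invoking the vanishing of coprime cohomology \emph{one should obtain}'' such an $x$ is a statement of intent, not a proof: the equation $x^{c_0}=x^{\mu}$ must be solved inside a nonabelian pro-$p$ group, and the naive induction along a central series requires correcting lifts by terms whose interaction with the eigenvalue condition you have not controlled. This is exactly the point where the paper invokes an external, nontrivial result: by Lemma~\ref{cc}(v) the action of $C$ on $P/\Phi(P)$ is nontrivial, one finds a $C$-invariant cyclic subgroup $D$ of $[P/\Phi(P),C]$ (using $\ell\mid p-1$), and then \cite[Thm.~5.1]{GH:11} (Guralnick--Haran) lifts the finite Frobenius group $DC$ to a $C$-invariant procyclic subgroup $H\leq P$ with $HC$ a profinite Frobenius group; fact (c) then gives the contradiction, exactly as in your final display. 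So your strategy for the coprimality step is the right one, but its crucial technical core needs either this citation or an honest proof, neither of which your proposal supplies.
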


The next  theorem  tells us that non-prosoluble profinite CA-groups  are  pro-$2$-by-(finite almost simple). We do not know however whether such an example exists.

 Recall also that a group is almost simple if it contains a non-abelian simple group and is contained within the automorphism group of that simple group.

\begin{bigthm} \label{t:simple_intro}
Let $G$ be an infinite profinite CA-group. If $G$ is not prosoluble, then $G/O_2(G)$ is a finite almost simple group.
\end{bigthm}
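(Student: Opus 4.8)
The plan is to combine Theorem~A with a transfer of the CA-condition across the pro-$p$ radical via coprime action, and then to run a generalized Fitting subgroup argument on the resulting finite quotient. First I would reduce: by Theorem~A the group $G$ is virtually abelian or virtually pro-$p$, and the virtually abelian case produces a prosoluble group (an abelian group or an abelian-by-cyclic Frobenius-type group, in analogy with Theorem~\ref{t:finite_CA}), which is excluded by hypothesis. Hence $G$ is virtually pro-$p$ for some prime $p$; set $P:=O_p(G)$. Since $G$ has an open normal pro-$p$ subgroup contained in $P$, the radical $P$ is open, so $\bar G:=G/P$ is finite with $O_p(\bar G)=1$, and $\bar G$ is non-soluble because $P$ is prosoluble. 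Finally $G$ is non-abelian, and in a non-abelian CA-group the centre is trivial (a nontrivial central element would have the whole group as its abelian centralizer), so $Z(G)=1$.

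The key device is a bridge lemma: for a $p'$-element $x\in G$ (lifting, via Schur--Zassenhaus, a given $p'$-element of $\bar G$), coprime action of $\langle x\rangle$ on the normal pro-$p$ subgroup $P$ gives $C_{\bar G}(xP)=C_G(x)P/P$, which is abelian since $C_G(x)$ is. Thus \emph{in $\bar G$ the centralizer of every $p'$-element is abelian}. From this I extract the structure of $\bar G$ through its generalized Fitting subgroup $F^*(\bar G)=F(\bar G)E(\bar G)$. As $O_p(\bar G)=1$, the Fitting subgroup $F(\bar G)$ is a $p'$-group every element of which has abelian centralizer, so $F(\bar G)$ is a nilpotent CA-group, hence abelian; a short argument using $[E(\bar G),F(\bar G)]=1$ and $C_{\bar G}(F^*(\bar G))\le F^*(\bar G)$ then forces $F(\bar G)=1$. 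Two distinct components would centralize each other, and each contains a nontrivial $p'$-element, so one component would lie in the abelian centralizer of a $p'$-element of the other --- impossible; hence there is a unique component $S$. Then $F^*(\bar G)=E(\bar G)=S$, the centre $Z(S)\le F(\bar G)=1$ makes $S$ simple and non-abelian, and $C_{\bar G}(S)\le F^*(\bar G)=S$ gives $C_{\bar G}(S)=1$, so $\bar G$ is almost simple. The same ``$p'$-part lands in $F(\bar G)=1$'' reasoning, applied to the abelian normal subgroup $C_G(P)$ (abelian because it lies in $C_G(z)$ for any $1\ne z\in Z(P)$), also yields $C_G(P)\le P$.

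It remains to prove $p=2$. Suppose $p$ is odd. Then every involution of $\bar G$ is a $p'$-element, so by the bridge lemma involution centralizers in the socle $S$ are abelian; since a Sylow $2$-subgroup of $S$ is noncyclic (otherwise $S$ would have a normal $2$-complement) it lies in the abelian centralizer of a central involution and is therefore abelian. By Walter's classification of the finite simple groups with abelian Sylow $2$-subgroups, together with the fact that the odd-characteristic candidates have non-abelian involution centralizers, $S\cong\textup{PSL}(2,2^f)$ for some $f\ge2$. Now I would exploit $P\ne1$: as $p$ is odd it divides exactly one of $2^f-1,\,2^f+1$, so $S$ contains a maximal torus $T_0$ of order prime to $p$ with dihedral normalizer $T_0\rtimes\langle\bar w\rangle$. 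Lifting a generator of $T_0$ to a $p'$-element $d$ and $\bar w$ to an involution $w$, I study the coprime action of $\langle d\rangle$ on $P$ together with the action of $w$ inverting $d$, and derive a contradiction from $C_G(P)\le P$ and the abelianity of $C_G(d)$ and $C_G(w)$. This forces $p=2$, whence $O_p(G)=O_2(G)$ and $G/O_2(G)=\bar G$ is finite almost simple.

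The main obstacle is precisely this last step. Almost-simplicity of $\bar G$ follows cleanly and almost formally from the bridge lemma and the generalized Fitting subgroup, but excluding odd $p$ requires genuinely analyzing the coprime action of $S\cong\textup{PSL}(2,2^f)$ on the (possibly non-abelian, not finitely generated) pro-$p$ group $P$ and extracting a CA-violation from the interplay of a $p'$-torus, its inverting involution, and their fixed points on $P$. Controlling that action --- rather than establishing almost-simplicity --- is where the real difficulty lies.
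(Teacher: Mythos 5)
Your reduction to the virtually pro-$p$ case, the bridge lemma $C_{\bar G}(\bar x)=C_G(x)P/P$ for $p'$-elements, and the analysis of components are sound, but the proposal has two genuine gaps. The first you concede yourself: the exclusion of odd $p$ is only a programme (Walter's theorem, a $p'$-torus of $\textup{PSL}(2,2^f)$ and its inverting involution acting on $P$), and you state explicitly that you do not know how to carry it out; as written, the theorem is therefore not proved. Moreover, the difficulty is self-inflicted, because odd $p$ can be excluded at the outset, before any structure theory of $\bar G$. The paper's Lemma~\ref{v4} shows that in an infinite non-abelian virtually pro-$p$ CA-group every $q$-Sylow subgroup with $q\neq p$ is cyclic: a noncyclic subgroup $B$ of order $q^2$ would act coprimely on the infinite open subgroup $O_p(G)$, so $O_p(G)=\prod_{b\in B^{\#}}C_{O_p(G)}(b)$ by Lemma~\ref{cc}(iv), some $C_{O_p(G)}(b)$ is nontrivial, and transitivity of commutation then forces $B$ to centralize $O_p(G)$; hence $O_p(G)$ lies in an abelian centralizer and $G$ is virtually abelian, so prosoluble by Theorem~\ref{t:virt_abel_intro}. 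Since non-abelian finite simple groups have noncyclic $2$-Sylow subgroups, cyclic $2$-Sylow subgroups make every finite quotient of $G$ soluble; thus odd $p$ already contradicts non-prosolubility (Proposition~\ref{p:prosoluble}), and $p=2$ comes for free. Note that this exploits the infinitude of $O_p(G)$ through Lemma~\ref{cc}(iv), which is strictly stronger information than your bridge lemma, which is only the coprime-quotient statement of Lemma~\ref{cc}(iii).

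Second, in the one case that survives, $p=2$, your ``short argument'' that $F(\bar G)=1$ does not close. If $F(\bar G)\neq1$, your hypotheses yield $E(\bar G)=1$ and $C_{\bar G}(z)=F(\bar G)$ for every nontrivial $z\in F(\bar G)$, i.e.\ $\bar G$ is a Frobenius group with abelian kernel $F(\bar G)$. That is not yet a contradiction with non-solubility: Frobenius complements need not be soluble (they may involve $\textup{SL}(2,5)$), and the bridge lemma cannot exclude this, because for $p=2$ it only controls centralizers of odd-order elements, and every odd-order element of $\textup{SL}(2,5)$ has abelian centralizer; the offending non-abelian centralizer is that of the central involution, a $2$-element, exactly where coprime lifting fails. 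Eliminating this configuration pushes you back into $G$ (for instance via a Frattini argument over a $2'$-Hall subgroup of the preimage of $F(\bar G)$, whose normalizer is metabelian because odd Sylow subgroups of $G$ are cyclic) --- that is, it needs the same cyclicity lemma you are missing. The paper's proof is organized precisely so that neither issue arises: it takes a minimal normal subgroup $M/O_2(G)$, applies the Frattini argument to a finite Sylow $p$-subgroup $P$ of $M$ for an odd $p\in\pi(M)$, observes that $C_G(P)$ is abelian by the CA-property so that $N_G(P)$ is virtually abelian and hence prosoluble (by Theorem~\ref{t:virt_abel_intro}, or by Theorem~\ref{t:finite_CA} if finite), rules out abelian $M/O_2(G)$ from $G=MN_G(P)$, and uses cyclicity of odd Sylow subgroups (Lemma~\ref{v4}) to conclude that $M/O_2(G)$ is simple and is the unique minimal normal subgroup.
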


Our next theorem treats the case of prosoluble virtually pro-$p$ CA-groups. 

\begin{bigthm} \label{t:virt_pro-p_intro}
Let $G$ be an infinite prosoluble CA-group.
If $G$ is virtually pro-$p$, 
then $G$ is a product $O_p(G)K$,
where the (finite) subgroup $K$ is either cyclic or a metacyclic Frobenius group.
\end{bigthm}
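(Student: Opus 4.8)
The plan is to combine the partition of a CA-group into its maximal abelian subgroups with the finite classification, Theorem~\ref{t:finite_CA}. Write $N=O_p(G)$; by hypothesis $N$ is open, so $\bar G:=G/N$ is a finite soluble group with $O_p(\bar G)=1$. I may assume that $G$ is non-abelian, and then $G$ is centreless, since a non-trivial central element would have non-abelian centralizer $G$, against the CA-property. In particular $G$ is not pro-$p$, so $\bar G\ne 1$.

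First I would show that $N$ is abelian and locate a large abelian normal subgroup. As a subgroup of a CA-group, $N$ is itself a CA-group; were it non-abelian it would be centreless, which is impossible for a non-trivial pro-$p$ group, so $N$ is abelian. Fixing $1\ne n\in N$ and using commutativity-transitivity, every non-trivial element of $N$ has the same centralizer $A:=C_G(N)$, which is a maximal abelian subgroup; hence $A$ is abelian, normal, open and self-centralizing, $C_G(A)=A$. Splitting $A=A_p\times A_{p'}$ into its pro-$p$ and (finite) pro-$p'$ parts, the characteristic subgroup $A_p$ is normal and pro-$p$, so $A_p\le O_p(G)=N\le A_p$ and $A=N\times A_{p'}$.

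Next I would extract a Frobenius complement. If $g\notin A$ and $1\ne a\in C_A(g)$, then $g\in C_G(a)=A$, a contradiction; thus $Q:=G/A$ acts faithfully and fixed-point-freely on $A$, that is, $Q$ is a Frobenius complement. A non-trivial $p$-element of $Q$ would centralize a non-trivial element of the pro-$p$ group $N$, which a fixed-point-free action forbids; hence $Q$ is a $p'$-group, and $\bar G\cong A_{p'}\rtimes Q$ is finite of order prime to $p$. As a subgroup of $G$ the finite group $Q$ is CA and prosoluble, so by Theorem~\ref{t:finite_CA} it is abelian or a Frobenius group; being also a Frobenius complement it cannot be a non-abelian Frobenius group (such a group contains a non-cyclic subgroup whose order is a product of two primes, which no Frobenius complement admits), so $Q$ is abelian, and an abelian Frobenius complement is cyclic. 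Therefore $\bar G$ is cyclic when $A_{p'}=1$, and otherwise a Frobenius group with abelian kernel $A_{p'}$ and cyclic complement $Q$. Finally, since $\gcd(|\bar G|,p)=1$, the profinite Schur--Zassenhaus theorem splits $1\to N\to G\to\bar G\to 1$ and produces a finite subgroup $K\cong\bar G$ with $G=O_p(G)K$ of the required type.

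The main obstacle is the passage showing that the complement $Q$ is \emph{cyclic}. The fixed-point-free action only certifies that $Q$ is an abstract Frobenius complement, whose Sylow subgroups might a priori be cyclic or generalized quaternion and which need not be abelian; it is the CA-property of $G$---inherited by $Q$---together with Theorem~\ref{t:finite_CA} that rules out the quaternion and the genuinely Frobenius possibilities and forces $Q$ to be cyclic. By contrast, the earlier reductions are formal consequences of the maximal-abelian partition, their real payoff being that $\bar G$ turns out to be a $p'$-group, which is exactly what allows $K$ to be realised as an honest finite subgroup by Schur--Zassenhaus rather than only as a quotient of $G$.
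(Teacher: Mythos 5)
Your argument breaks at its first substantive step, and the error propagates through everything that follows. You claim $N=O_p(G)$ must be abelian because ``were it non-abelian it would be centreless, which is impossible for a non-trivial pro-$p$ group.'' The fact that a non-trivial $p$-group has non-trivial centre is a theorem about \emph{finite} $p$-groups; it fails for infinite pro-$p$ groups. For instance, a non-abelian free pro-$p$ group has trivial centre, and it is even a CA-group (centralizers of non-trivial elements are procyclic), so the CA hypothesis gives no obstruction here. The same false principle appears twice more: in ``In particular $G$ is not pro-$p$'' (harmless, since the pro-$p$ case of the theorem is trivial with $K=1$), and in the claim that a non-trivial $p$-element of $Q$ must centralize a non-trivial element of the pro-$p$ group $N$ --- also false without coprimality: inversion is a fixed-point-free action of $C_2$ on $\mathbb{Z}_2$.

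Worse, the structure you derive is not merely unproved but actually false, as the paper's own example shows. The group $G=\textup{PSL}(2,\mathbb{Z}_2)$ is an infinite prosoluble CA-group which is virtually pro-$2$; the kernel of the reduction $G\to\textup{PSL}(2,2)$ is exactly $O_2(G)$, it is a \emph{non-abelian} pro-$2$ group, and $G/O_2(G)\cong S_3$ has order divisible by $p=2$. This refutes your claims that $O_p(G)$ is abelian, that $\bar G=G/O_p(G)$ is a $p'$-group, and consequently the Schur--Zassenhaus splitting and your final description of $K$ as a $p'$-complement isomorphic to $\bar G$: in this example any $K$ with $G=O_2(G)K$ has order divisible by $2$. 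The theorem cannot be reduced to the case where $O_p(G)$ is abelian, and the paper's proof makes no structural claim about $O_p(G)$ at all; instead it takes a Hall $p'$-subgroup $H$ of $O_{p,p'}(G)$, writes $G=O_p(G)N_G(H)$ by the Frattini argument, observes that $N_G(H)\cap O_p(G)=C_{O_p(G)}(H)$ so that $N_G(H)$ is finite or virtually abelian, and then applies Theorem~\ref{t:virt_abel_intro} (infinite case) or Theorem~\ref{t:finite_CA} (finite case) to $N_G(H)$ to extract $K$. Your maximal-abelian-partition machinery would work if $O_p(G)$ happened to be abelian, but that is precisely the special case the theorem is not confined to.
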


Theorems \ref{t:simple_intro} and \ref{t:virt_pro-p_intro} are  proved in \hyperref[s:virt_pro-p]{Section~\ref{s:virt_pro-p}}.

In \hyperref[s:virt_abel]{Section~\ref{s:virt_abel}} we concentrate  on virtually abelian case where we can say much more.

\begin{bigthm} \label{t:virt_abel_intro}
Let $G$ be an infinite profinite CA-group which is virtually abelian.
If~$N$ is the maximal open normal abelian subgroup of $G$,
then $G/N$ is isomorphic to a cyclic group, to a generalized quaternion group, 
or to  $\textup{SL}(2,3)$.
In particular, $G$ is prosoluble.
\end{bigthm}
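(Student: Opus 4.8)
The plan is to first fix the abstract position of $N$ inside $G$, then play the classification of the finite quotient $Q:=G/N$ against commutativity-transitivity. First I would show that $N$ is self-centralizing. Any two nontrivial elements of the abelian group $N$ commute, so by commutativity-transitivity they have a common centralizer; hence $C_G(n)=C_G(N)$ for every nontrivial $n\in N$. This common subgroup is abelian (it is the centralizer of a nontrivial element), normal (it is the centralizer of the normal subgroup $N$), and open (it contains $N$), so maximality of $N$ forces $C_G(N)=N$. Consequently $Q$ embeds in $\mathrm{Aut}(N)$, and the same idea shows that every $g\notin N$ acts on $N$ without nontrivial fixed points: if some nontrivial $n\in N$ commuted with $g$ then $g\in C_G(n)=N$. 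Thus $Q$ acts faithfully and fixed-point-freely on $N$, i.e. $Q$ is a finite Frobenius complement.

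A crucial point, and the very source of the quaternion and $\mathrm{SL}(2,3)$ possibilities, is that $|Q|$ and $|N|$ need \emph{not} be coprime: a cyclic group of order $p$ acts fixed-point-freely on the torsion-free pro-$p$ module $\mathbb{Z}_p[\zeta_p]$ (a primitive $p$-th root of unity acting by multiplication), so the extension $1\to N\to G\to Q\to 1$ need not split. In particular one cannot simply realize $Q$ as a subgroup of $G$ and invoke Theorem~\ref{t:finite_CA}; the generalized quaternion groups and $\mathrm{SL}(2,3)$ are themselves \emph{not} CA, and they survive precisely because the offending nonabelian centralizers are dissolved by the nonsplit extension. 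The CA hypothesis must therefore be used locally. Two facts I would extract are: (a) every finite subgroup of $G$ is a finite CA-group, so by Theorem~\ref{t:finite_CA} every finite $p$-subgroup of $G$ is abelian; and (b) for $x\in Q$ of order prime to the primes dividing $|N|$, coprime action yields $C_G(h)N/N=C_Q(x)$ and $C_G(h)\cap N=1$ for a suitable lift $h$, whence $C_G(h)\cong C_Q(x)$ is abelian. Combined with the structure theory of Frobenius complements (periodic cohomology: every Sylow subgroup is cyclic or generalized quaternion, and the unique involution, when $|Q|$ is even, is central), this already restricts $Q$ severely.

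The classification should then split into two regimes. If every Sylow subgroup of $Q$ is cyclic, $Q$ is a $Z$-group; since a Frobenius complement has no noncyclic subgroup of order $pq$, any nontrivial interaction between Sylow subgroups would produce one, so the metacyclic $Z$-group $Q$ is forced to be abelian, hence cyclic. If instead the Sylow $2$-subgroup of $Q$ is generalized quaternion, I would study the Sylow pro-$2$ subgroup $S$ of $G$: by (a) it is a CA pro-$2$ group that is a nonsplit fixed-point-free extension of $Q_8$ (or $Q_{2^n}$) by the abelian pro-$2$ group $N_2$. Here one uses that an involution of $G$ acts fixed-point-freely on $N$, hence (the torsion-free case forcing $N$ to be $2$-torsion-free) acts by inversion, so all involutions lie in a single coset of $N$ and distinct ones fail to commute. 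Feeding this, together with (b) applied to the odd part, into the structure of $S$ and of $N_G(S)$ should force the odd part of $Q$ to be trivial, giving $Q\cong Q_{2^n}$, or to be a cyclic group of order $3$ acting on $Q_8$ in the binary tetrahedral fashion, giving $Q\cong\mathrm{SL}(2,3)$; solvability of $Q$, and hence prosolubility of $G$, would drop out of the same analysis.

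The main obstacle is exactly this elimination. After (a), (b), and the Sylow constraints, the dangerous survivors are the other Frobenius complements with generalized quaternion Sylow $2$-subgroup, namely the binary octahedral group and $\mathrm{SL}(2,5)$, together with forbidden products such as $Q_8\times C_3$. These are invisible to the coprime-centralizer test (b): in $\mathrm{SL}(2,5)$ the only nonabelian centralizer is that of the central involution, exactly as in the permitted group $\mathrm{SL}(2,3)$, and $2$-locally the two are indistinguishable, so no local computation separates them. Ruling them out demands a genuinely global argument. I expect to exploit that $\mathrm{PSL}(2,5)=A_5$ is generated by elements of odd order, each of which (being coprime to $|N|$) centralizes a unique involution of $G$, in order to force some centralizer $C_G(g)$ to surject onto a nonabelian subgroup of $Q$ and thereby contradict its abelianness. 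Making this contradiction precise and uniform across the possibly nonsplit extension and the possible torsion in $N$ is the delicate heart of the proof.
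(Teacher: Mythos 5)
Your opening is sound and coincides with the paper's: by commutativity-transitivity $C_G(n)=C_G(N)$ for every nontrivial $n\in N$, maximality forces $C_G(N)=N$, and $Q=G/N$ acts faithfully and fixed-point-freely on $N$; your warning that $|Q|$ and $|N|$ need not be coprime is also exactly right. But that warning undercuts your next step. The structure theory of Frobenius complements you invoke (periodic cohomology, Sylows cyclic or generalized quaternion, every subgroup of order $pq$ cyclic) is a theory of fixed-point-free actions on \emph{finite} groups, where coprimality of complement and kernel is automatic; it does not transfer formally to a fixed-point-free action on an infinite profinite abelian kernel. Indeed, in the non-coprime setting fixed-point-freeness on $N$ does not even pass to finite quotients of $N$ (inversion acts freely on $\mathbb{Z}_2$ but has fixed points in every $\mathbb{Z}/2^k$), so one cannot reduce to the finite theory. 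These facts about $Q$ are true here, but they must be proved: the paper proves the Sylow statement via Lemma~\ref{rankexpo} (a noncyclic subgroup of order $p^2$ forces $N$ to have exponent $p$, and then the semidirect product of a finite orbit subgroup with it is a finite $p$-group, which has central fixed points), and proves the ``$pq$-subgroups are cyclic'' statement as Lemma~\ref{l:pq}, whose proof is genuinely profinite and uses Lemma~\ref{fro} on Frobenius groups of automorphisms, the Thompson--Higman Theorem~\ref{higm}, the CA hypothesis, and the maximality of $N$. Only the unique central involution is cheap in this generality (a fixed-point-free involution on an abelian group must invert it).

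The second, decisive gap is the endgame, and you say so yourself: excluding $\mathrm{SL}(2,5)$, the binary octahedral group and $\mathbf{Q}_8\times C_3$, and deducing solubility of $Q$ (hence prosolubility of $G$), is deferred to an argument you ``expect'' to carry out. That is precisely the heart of the theorem, and the paper closes it by a route quite different from the global $A_5$-generation idea you sketch; it never needs to name $\mathrm{SL}(2,5)$ at all. Lemma~\ref{l:pq} kills every nonabelian metacyclic subgroup of $Q$ whose order is divisible by two primes; since odd Sylows are cyclic, this makes $N_{Q}(X)/C_{Q}(X)$ a $p$-group for every $p$-subgroup $X$ and every odd $p$, so Frobenius' normal $p$-complement theorem gives normal $p$-complements for all odd $p$ and hence solubility of $Q$; then $F(Q)$ is cyclic or generalized quaternion, and the trichotomy follows (cyclic; $\mathbf{Q}_{2^n}$ for $n\geq 4$ via Lemma~\ref{ccc}(iii); $\mathbf{Q}_8$ plus an order-$3$ action giving $\mathrm{SL}(2,3)$ via a normal $3$-complement). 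In particular your ``dangerous survivors'' die for a purely local reason: $\mathrm{SL}(2,5)$ and the binary octahedral group both contain dicyclic subgroups $C_3\rtimes C_4$ of order $12$, and $\mathbf{Q}_8\times C_3$ itself contains the normal cyclic $C_{12}$ with quotient $C_2$ --- all nonabelian metacyclic of mixed order, hence impossible by Lemma~\ref{l:pq}. Without an argument playing the role of that lemma, neither your cyclic-Sylow case (``Z-group, hence cyclic'') nor your quaternion case is actually closed.
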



Throughout the article we require specific results on groups of automorphisms which, for the reader's convenience, are collected in the preliminary \hyperref[s:prelim]{Section~\ref{s:prelim}}. \hyperref[t:virt_pronilp_intro]{Theorem~\ref{t:virt_pronilp_intro}} is proved in \hyperref[s:virt_pronilp]{Section~\ref{s:virt_pronilp}}, where the concept of universal Frattini
cover of profinite groups plays an important role. Then in \hyperref[s:csa]{Section~\ref{s:csa}} we prove \hyperref[t:CSA_intro]{Theorem~\ref{t:CSA_intro}} and conclude the paper with comments and open questions.

\phantomsection
\section{Automorphisms of finite and profinite groups}
\label{s:prelim}
Throughout this paper, every homomorphism of profinite groups is continuous, and every subgroup is closed. We use the standard notation $\mathbb Q$, $\mathbb Q_p$, $\mathbb Z$, $\mathbb Z_p$ for rational,  $p$-adic rational, integer and $p$-adic integer numbers, and $\Phi(G)$ for the Frattini subgroup of a profinite group $G$.  
A pro-$p'$ group is an inverse limit of finite discrete groups whose orders are divisible by primes different from $p$ only.
A cyclic group is always finite.

Many results of the theory of finite groups admit natural interpretation for profinite groups. 
This can be exemplified by the Sylow theorems, the Frattini argument, and so on. 
Throughout the article we use certain profinite versions of facts on finite groups 
without explaining in detail how the results on profinite groups can be deduced 
from the corresponding ones on finite groups. 
On all such occasions the deduction can be performed via the routine inverse limit argument. 

In the present section we concentrate on automorphisms of finite and profinite groups. If $A$ is a group of automorphisms of a group $G$, the subgroup generated by elements of the form $g^{-1}g^\alpha$ with $g\in G$ and $\alpha\in A$ is denoted by $[G,A]$. It is well-known that the subgroup $[G,A]$ is an $A$-invariant normal subgroup in $G$. We also write $C_G(A)$ for the centralizer  of $A$ in $G$ and $A^\#$ for the set of nontrivial elements of $A$.

Our first  lemma is a profinite version of a list of well-known facts on coprime  actions (see for example \cite[Ch.~5 and 6]{Gorenstein:80}).  Here $|G|$ means the order of the profinite group $G$ (see for example \cite[Sec.~2.3]{RZ:00}). A detailed proof of the profinite version of item (iii) can be found in \cite{Shumyatsky:98}.

\begin{lemma}\label{cc}
Let  $A$ be a finite group of automorphisms of a profinite group $G$ such that $(|G|,|A|)=1$. Then
\begin{enumerate}
\item[(i)] $G=C_{G}(A)[G,A]$.
\item[(ii)] $[G,A,A]=[G,A]$. 
\item[(iii)] $C_{G/N}(A)=C_G(A)N/N$ for any $A$-invariant normal subgroup $N$ of $G$.
\item[(iv)] If $G$ is pronilpotent and $A$ is a noncyclic abelian group, then $G=\prod_{a\in A^{\#}}C_{G}(a)$.
\item[(v)]  If  $[G,A]\leq\Phi(G)$, then $A=1$.
\end{enumerate}
\end{lemma}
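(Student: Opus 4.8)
The plan is to reduce every assertion to the case of a finite group $G$ and then invoke (or reprove) the corresponding classical facts on coprime actions. Since $A$ is finite and $(|G|,|A|)=1$, the $A$-invariant open normal subgroups of $G$ form a cofinal system, and each finite quotient $G/M$ by such a subgroup again carries a coprime action of $A$ (coprimality being inherited from $|G/M|\mid|G|$). Passing to the inverse limit over these quotients then yields the profinite statements from their finite counterparts; this is exactly the routine inverse limit argument announced in the text. Item (iii) is the technical core, and its profinite proof is already available in \cite{Shumyatsky:98} (the finite version being standard, see \cite[Ch.~5--6]{Gorenstein:80}); I would take it as the starting point and deduce (i), (ii) and (v) from it, treating (iv) separately.

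For (i) I would argue directly from (iii). The subgroup $[G,A]$ is $A$-invariant and normal, and by the very definition of $[G,A]$ the group $A$ acts trivially on the quotient $G/[G,A]$, so $C_{G/[G,A]}(A)=G/[G,A]$. Applying (iii) with $N=[G,A]$ gives $C_{G/[G,A]}(A)=C_G(A)[G,A]/[G,A]$, whence $G=C_G(A)[G,A]$. For (ii) the inclusion $[G,A,A]\le[G,A]$ is clear; for the reverse I would work in $\bar G=G/[G,A,A]$. There $[\bar G,A]=[G,A]/[G,A,A]$ is centralized by $A$, so $[\bar G,A]\le C_{\bar G}(A)$, and combining this with (i) applied to $\bar G$, namely $\bar G=C_{\bar G}(A)[\bar G,A]$, forces $\bar G=C_{\bar G}(A)$; thus $A$ acts trivially on $\bar G$, i.e.\ $[G,A]\le[G,A,A]$. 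Item (v) is then immediate: by (i) we have $G=C_G(A)[G,A]\le C_G(A)\Phi(G)$, and since the elements of $\Phi(G)$ are non-generators this gives $G=C_G(A)$; as $A$ is a \emph{faithful} group of automorphisms, centralizing all of $G$ forces $A=1$.

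The substantial point is (iv). First I would reduce to the case where $G$ is a pro-$p$ group: a pronilpotent group is the direct product of its (unique, hence $A$-invariant) Sylow pro-$q$ subgroups, and since the family of centralizers $C_G(a)$ respects this decomposition it suffices to treat one Sylow subgroup at a time. Next I would reduce to $G$ abelian by passing to the Frattini quotient $\bar G=G/\Phi(G)$: once $\bar G=\prod_{a\in A^\#}C_{\bar G}(a)$ is known, (iii) identifies $C_{\bar G}(a)$ with $C_G(a)\Phi(G)/\Phi(G)$, so the $C_G(a)$ together with $\Phi(G)$ generate $G$, and the non-generator property of $\Phi(G)$ removes $\Phi(G)$. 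For abelian $G$ the mechanism is representation-theoretic: regarding $G$ as a $\mathbb{Z}_p[A]$-module with $(|G|,|A|)=1$, Maschke's theorem splits $G$ (after extending scalars to a splitting field for the abelian group $A$) into one-dimensional $A$-characters. Here noncyclicity enters decisively: a faithful character of $A$ would force $A$ cyclic, so every nontrivial character $\chi$ has $\Ker\chi\neq1$; choosing $a\in(\Ker\chi)^\#$ places the corresponding isotypic component inside $C_G(a)$, while the trivial component is $C_G(A)$. Reassembling the components yields $G=\prod_{a\in A^\#}C_G(a)$.

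The main obstacle I anticipate is entirely within (iv), namely making the representation-theoretic decomposition rigorous over $\mathbb{Z}_p$ rather than over a splitting field, and descending the character decomposition back from the extended coefficients to an honest product of centralizers in $G$; one must verify that the isotypic pieces are defined over the base ring and recombine to genuine closed subgroups whose product exhausts $G$. A secondary technical point is ensuring that the reduction to abelian $G$ preserves the full product structure and not merely generation, for which the classical coprime-action bookkeeping of \cite[Ch.~5--6]{Gorenstein:80} is the appropriate tool.
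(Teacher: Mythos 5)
Your general framework---the cofinal system of $A$-invariant open normal subgroups, coprimality passing to finite quotients, and the inverse limit---is exactly the ``routine inverse limit argument'' the paper invokes (the paper gives no proof of this lemma beyond citing \cite{Gorenstein:80} for the finite facts and \cite{Shumyatsky:98} for the profinite version of (iii)). Your derivations of (i), (ii) and (v) from (iii) are correct: (i) by applying (iii) with $N=[G,A]$, (ii) by passing to $G/[G,A,A]$ and invoking (i), and (v) from (i) together with the non-generator property of $\Phi(G)$ and faithfulness of $A$.

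The genuine gap is in (iv), and it is the point you dismiss as a ``secondary technical point'': reducing to $\bar G=G/\Phi(G)$ can only ever yield \emph{generation}, $G=\langle C_G(a)\mid a\in A^\#\rangle$, not the stated set-product $G=\prod_{a\in A^\#}C_G(a)$. The non-generator property of $\Phi(G)$ applies to the closed subgroup generated by the $C_G(a)$; it cannot restore a product decomposition, and for non-abelian pro-$p$ groups generation by subgroups is strictly weaker than being their product (the dihedral group of order $8$ is generated by two reflection subgroups of order $2$ whose set-product has only four elements). So the route you propose cannot prove (iv) as stated. The standard repair, which is how the finite fact behind the paper's citation to \cite[Ch.~5]{Gorenstein:80} is actually proved, is to induct along the \emph{center} rather than the Frattini quotient: with $Z=Z(G)$ nontrivial, item (iii) gives $C_{G/Z}(a)=C_G(a)Z/Z$, so by induction $G=\bigl(\prod_{a}C_G(a)\bigr)Z$; the abelian case gives $Z=\prod_a C_Z(a)$, and since each $C_Z(a)$ is central its factors can be slid across the product and absorbed into the corresponding $C_G(a)$, yielding $G=\prod_a C_G(a)$. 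Central factors commute with everything, Frattini factors do not---that is precisely why $Z(G)$ works where $\Phi(G)$ fails. Finally, the representation theory you plan for the abelian case (and the $\mathbb{Z}_p$-descent obstacle you flag as the main difficulty) is unnecessary: if $B\leq A$ is noncyclic of order $q^2$ with $q\neq p$ and $G$ is abelian pro-$p$, the averaging identity of the paper's own Lemma~\ref{rankexpo} gives $qG\subseteq\prod_{b\in B^\#}C_G(b)$, and multiplication by $q$ is an automorphism of $G$ by coprimality, so $G=qG$ and the abelian case is finished in two lines.
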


\begin{lemma}\label{ccc}
Let  $\alpha$ be an automorphism of a finite group $G$ such that $(|G|,|\alpha|)=1$. 
\begin{enumerate}
\item[(i)] If $G$ is cyclic of $2$-power order, then $\alpha=1$.
\item[(ii)] If $G$ is cyclic of prime-power order, then either $\alpha=1$ or $C_G(\alpha)=1$.
\item[(iii)] If $G$ is a generalized quaternion group, then either $\alpha=1$ or $|\alpha|=3$ and $|G|=8$.
\end{enumerate}
\end{lemma}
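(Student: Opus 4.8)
The three parts are essentially independent, and each reduces to understanding the odd-order automorphisms of the relevant $2$-group (or $p$-group): the coprimality hypothesis $(|G|,|\alpha|)=1$ together with $|G|$ a prime power forces $|\alpha|$ to be coprime to that prime, and in parts (i) and (iii) in particular it forces $|\alpha|$ to be odd. For part (i), the plan is simply to note that when $G$ is cyclic of order $2^n$ one has $\mathrm{Aut}(G)\cong(\mathbb{Z}/2^n\mathbb{Z})^\times$, a group of order $\varphi(2^n)=2^{n-1}$, hence a $2$-group. An automorphism of odd order inside a $2$-group is trivial, so $\alpha=1$.

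For part (ii), write $G=\langle g\rangle$ of order $p^n$ and assume $\alpha\neq1$; I want to deduce $C_G(\alpha)=1$. Since $C_G(\alpha)$ is a subgroup of the cyclic group $G$ it is cyclic, so if it were nontrivial it would contain the unique subgroup $\langle g^{p^{n-1}}\rangle$ of order $p$; thus it suffices to show $\alpha$ does not fix $g^{p^{n-1}}$. Writing $g^\alpha=g^k$ with $\gcd(k,p)=1$, fixing $g^{p^{n-1}}$ is equivalent to $k\equiv1\pmod p$, which in turn says that $\alpha$ acts trivially on $G/\Phi(G)=G/\langle g^p\rangle$, i.e. $[G,\alpha]\le\Phi(G)$. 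By Lemma~\ref{cc}(v) this forces $\alpha=1$, contrary to assumption, so $C_G(\alpha)=1$.

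For part (iii) let $G=Q_{2^n}$ be generalized quaternion, $n\ge 3$, with cyclic maximal subgroup $A=\langle a\rangle$ of order $2^{n-1}$ and an element $b\notin A$ satisfying $b^2=a^{2^{n-2}}$ and $a^b=a^{-1}$, and recall $|\alpha|$ is odd. I would split into two cases. If $n\ge 4$, then every element outside $A$ has order $4$ (since $(a^ib)^2=a^{2^{n-2}}$ is the central involution) while $a$ has order $2^{n-1}>4$, so $A$ is the unique cyclic subgroup of order $2^{n-1}$ and is therefore characteristic; hence $\alpha$ preserves $A$, and $\alpha|_A$ is an odd-order automorphism of a cyclic $2$-group, trivial by part (i). Consequently $\alpha$ fixes $a$ and sends $b$ to some $a^tb$, whence $\alpha^m(b)=a^{mt}b$ and the order of $\alpha$ is a power of $2$; being odd it is $1$. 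If $n=3$, so $G=Q_8$, then $\alpha$ permutes the three cyclic subgroups of order $4$, and the kernel of this action consists of odd-order automorphisms fixing each such subgroup, which are trivial by part (i); thus $\langle\alpha\rangle$ embeds into $S_3$ and the odd-order element $\alpha$ has order $1$ or $3$, giving the stated dichotomy.

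The main obstacle is the case split in part (iii), where one must pin down exactly why $Q_8$ is exceptional. The decisive structural contrast is that in $Q_8$ the three cyclic subgroups of order $4$ are permuted transitively and so admit an order-$3$ symmetry, whereas in the larger generalized quaternion groups the cyclic maximal subgroup is characteristic and hence rigidly preserved, which together with part (i) annihilates every odd-order automorphism.
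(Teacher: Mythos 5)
Your proof is correct; it follows the paper's outline most closely in part (iii) but replaces the paper's key computational tool in parts (i) and (ii). The paper proves (i) by induction on $|G|$: if $H$ is the maximal subgroup, induction gives $[H,\alpha]=1$, triviality of the induced action on $G/H$ gives $[G,\alpha,\alpha]=1$, and the coprime identity $[G,\alpha,\alpha]=[G,\alpha]$ (Lemma~\ref{cc}(ii)) forces $\alpha=1$; you instead observe that $\mathrm{Aut}(G)\cong(\mathbb{Z}/2^n\mathbb{Z})^\times$ is a $2$-group, so an odd-order automorphism is trivial --- shorter and self-contained. For (ii) the paper uses the coprime decomposition $G=C_G(\alpha)[G,\alpha]$ (Lemma~\ref{cc}(i)) plus the fact that a cyclic $p$-group is not a product of two proper subgroups, so that $G=C_G(\alpha)$ or $G=[G,\alpha]$; you instead pass to the socle, translate nontriviality of $C_G(\alpha)$ into triviality of the action on $G/\Phi(G)$, and quote Lemma~\ref{cc}(v) --- an equally valid coprime-action argument routed through a different item of the same lemma. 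In (iii) both proofs split at $|G|=16$ and rest on the cyclic index-$2$ subgroup being characteristic; the paper then reruns its commutator argument from (i), whereas you note that $\alpha$ fixes $a$ by part (i) and sends $b$ to $a^tb$, so $|\alpha|$ is a $2$-power and hence $1$; for $\mathbf{Q}_8$ you obtain $|\alpha|\in\{1,3\}$ by embedding $\langle\alpha\rangle$ into $S_3$ via its action on the three cyclic subgroups of order $4$ (faithful on odd-order elements, again by part (i)), where the paper simply asserts that a nontrivial coprime automorphism of $\mathbf{Q}_8$ has order $3$. Your version buys explicitness, and in the $\mathbf{Q}_8$ case actually supplies the justification the paper leaves implicit; the paper's buys uniformity, since the single coprime identity $[G,\alpha,\alpha]=[G,\alpha]$ disposes of (i) and the large quaternion case of (iii) at once.
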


\begin{proof} 
Let $G$ be a nontrivial cyclic 2-group and let $H$ be the (unique) maximal proper subgroup in $G$. 
Arguing by induction on $|G|$ we can assume that $[H,\alpha]=1$. 
Obviously $\alpha$ induces the trivial automorphism of $G/H$ and so $[G,\alpha,\alpha]=1$ whence $\alpha=1$. 
Thus, (i) is proved.

Assume now that $G$ is cyclic of prime-power order. 
Then $G$ is not a product of proper subgroups. 
Since $G=C_{G}(\alpha)[G,\alpha]$, we either have $G=C_{G}(\alpha)$ or $G=[G,\alpha]$. 
This proves (ii).

Finally, suppose that $G$ is a generalized quaternion group.
If $|G|\geq16$, then $G$ has a (unique) characteristic cyclic subgroup $H$ of index 2. 
As in the proof of item~(i), $H\leq C_G(\alpha)$ and $[G,\alpha,\alpha]=1$ whence $\alpha=1$. 
Of course, if $|G|=8$ and $\alpha\not=1$, then $|\alpha|=3$. 
The proof is now complete.
\end{proof}

Certainly, the next lemma is not new. We include the proof just for the reader's convenience.
\begin{lemma}\label{rankexpo}
If $A$ is a noncyclic group of order $p^2$ acting 
on an additive abelian group $G$,
then $pG\subseteq\sum_{a\in A^\#} C_G(a)$.
\end{lemma}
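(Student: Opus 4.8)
The plan is to work inside the integral group ring $\mathbb{Z}[A]$, regarding $G$ as a $\mathbb{Z}[A]$-module, and to exploit the partition of the nontrivial elements of $A$ into its subgroups of order $p$. First I would note that since $A$ is noncyclic of order $p^2$ it is elementary abelian, so it has exactly $p+1$ subgroups of order $p$, say $H_1,\dots,H_{p+1}$, and any two of them meet only in the identity. Consequently the sets $H_j^\#$ partition $A^\#$. For each $j$ I fix a generator $a_j$ of $H_j$ and form the subgroup norm $\sigma_j=1+a_j+\cdots+a_j^{p-1}\in\mathbb{Z}[A]$, together with the full norm $\nu=\sum_{a\in A}a$.

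The heart of the argument is the group-ring identity $\sum_{j=1}^{p+1}\sigma_j=p+\nu$. Indeed $\sum_j\sigma_j$ counts the identity $p+1$ times and, by the partition above, counts each element of $A^\#$ exactly once; hence it equals $(p+1)+\sum_{a\in A^\#}a=p+\nu$. Applying this operator identity to an arbitrary $g\in G$ yields $pg=\sum_j\sigma_j g-\nu g$, which reduces the problem to understanding the two types of terms on the right.

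Finally I would observe that each $\sigma_j g$ is fixed by $a_j$, since $a_j\sigma_j=\sigma_j$ as operators on $G$ (using $a_j^p=1$), so $\sigma_j g\in C_G(a_j)\subseteq\sum_{a\in A^\#}C_G(a)$; and $\nu g$ is fixed by every element of $A$, hence lies in $C_G(A)\subseteq\sum_{a\in A^\#}C_G(a)$. Because $\sum_{a\in A^\#}C_G(a)$ is a subgroup of the abelian group $G$ and thus closed under subtraction, the element $pg=\sum_j\sigma_j g-\nu g$ lies in it, giving exactly $pG\subseteq\sum_{a\in A^\#}C_G(a)$. There is no genuine obstacle here beyond getting the bookkeeping of the identity $\sum_j\sigma_j=p+\nu$ right, which rests solely on the clean partition of $A^\#$; note in particular that, unlike \hyperref[cc]{Lemma~\ref{cc}}, no coprimality hypothesis is needed.
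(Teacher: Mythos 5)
Your proof is correct and is essentially the paper's own argument: the paper's element-wise identity $px=\bigl(\sum_i t_i(x)\bigr)-t(x)$, with $t_i(x)=\sum_{\alpha\in A_i}x^\alpha$ and $t(x)=\sum_{\alpha\in A}x^\alpha$, is exactly your group-ring identity $\sum_j\sigma_j=p+\nu$ applied to $g$, and both proofs rest on the same partition of $A^\#$ by the $p+1$ subgroups of order $p$. The $\mathbb{Z}[A]$-module phrasing is only a cosmetic repackaging, so there is nothing substantive to add or correct.
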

\begin{proof}  
Each nontrivial element of $A$ belongs to exactly one of the subgroups $A_1,\dots,A_{p+1}$ of order $p$ of $A$. 
For every $i\leq p+1$ and $x\in G$ write $t_i(x)=\sum_{\alpha\in A_i}x^\alpha$ and $t(x)=\sum_{\alpha\in A}x^\alpha$.
It is straightforward that $px=(\sum_it_i(x))-t(x)$.
Since $t(x)\in C_G(A)$ and $t_i(x)\in C_G(A_i)$, we obtain the containment $pG\subseteq\sum_i C_G(A_i)$.
\end{proof}

The following two lemmas were established in \cite{KhMSh:14}.

\begin{lemma}\label{for1}
Suppose that a finite group $G$ admits a Frobenius group of automorphisms
$FH$ with kernel $F$ and complement $H$. If $N$ is an $FH$-invariant normal
subgroup of $G$ such that $C_N(F)=1$, then $C_{G/N}(H)=C_G(H)N/N$.
\end{lemma}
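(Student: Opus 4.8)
The inclusion $C_G(H)N/N \subseteq C_{G/N}(H)$ is immediate, so the entire content is the reverse inclusion: I must show that every $H$-fixed coset of $N$ contains an element of $C_G(H)$. The plan is to phrase this as a lifting-of-fixed-points problem inside the semidirect product $\Gamma = G \rtimes FH$, in which $G \cap FH = 1$. Fix $g\in G$ with $gN \in C_{G/N}(H)$ and write $x^h = h^{-1}xh$; then $a_h := g^{-1}g^h \in N$ for every $h \in H$. Setting $M = N \rtimes H \leq \Gamma$, a short computation gives $g^{-1}hg = a_{h^{-1}}h \in M$, so $g$ normalizes $M$ and $H^g = g^{-1}Hg$ is a second complement to $N$ in $M$: it is isomorphic to $H$ and maps onto $M/N \cong H$, hence meets $N$ trivially.

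The key structural observation is that for $x \in G$, \emph{normalizing} $H$ already forces \emph{centralizing} $H$. Indeed, if $x^{-1}hx \in H$ then $c := h^{-1}x^{-1}hx$ lies in $H$ and also in $G$ (as $G \trianglelefteq \Gamma$), so $c \in G \cap FH = 1$ and $x^{-1}hx = h$. Consequently it suffices to produce $n_0 \in N$ with $H^{g} = H^{n_0}$: then $gn_0^{-1} \in gN$ normalizes, hence centralizes, $H$, giving $gN = (gn_0^{-1})N \in C_G(H)N/N$. Thus the whole lemma reduces to the assertion that any two complements of $N$ in $M = N \rtimes H$ are conjugate by an element of $N$, equivalently that the (nonabelian) cohomology $H^1(H,N)$ is trivial.

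Since no coprimality between $|N|$ and $|H|$ is assumed, this conjugacy of complements is exactly where the Frobenius hypothesis must enter, and it is the heart of the matter. I would first perform a standard dévissage: inducting on $|N|$ and lifting complements through an $FH$-invariant chief-type series, using that the fixed-point condition $C_{(-)}(F)=1$ is inherited by the relevant factors, one reduces to the case where $N$ is an elementary abelian $p$-group $V$ on which $FH$ acts with $C_V(F)=0$. The decisive representation-theoretic fact is then that such a $V$ is a \emph{free} $\mathbb{F}_p[H]$-module: the trace $\sum_{f\in F} v^f$ is $F$-invariant and so lands in $C_V(F)=0$, whence $V$ has no trivial $F$-constituent, and the free action of the Frobenius complement $H$ on the nontrivial $F$-data (the $F$-homogeneous components are permuted freely by $H$) forces $V\!\mid_H$ to be free. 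Freeness yields $H^1(H,V)=0$, so the complements are $N$-conjugate, completing the reduction and hence the proof.

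I expect this last step to be the genuine obstacle: establishing the cohomological triviality (freeness) of $N$ as an $H$-module from $C_N(F)=1$ together with the Frobenius structure is precisely what must replace the absent coprimality hypothesis, and it is the only place where the full strength of ``$FH$ is Frobenius'' is used. The semidirect-product reduction of the first two paragraphs is elementary and works for arbitrary $N$; all the difficulty is concentrated in converting the f.p.f. action of the kernel into freeness over the complement.
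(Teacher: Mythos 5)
First, a point of reference: the paper does not prove this lemma at all --- it is quoted from \cite{KhMSh:14} (``The following two lemmas were established in [KhMSh14]''), so your attempt has to be measured against the argument in that reference. Its main ingredients are exactly the two you identify: the reduction to $N$-conjugacy of complements of $N$ in $N\rtimes H$, and the lemma that a $kFH$-module $V$ with $C_V(F)=0$ is free as a $kH$-module. Your first two paragraphs (the semidirect-product reduction, ``normalizing $H$ forces centralizing $H$'', and the translation into conjugacy of complements) are correct and complete, and your sketch of the freeness lemma is the right argument, modulo two points that should be made explicit: the free permutation of the nontrivial $F$-homogeneous components by $H$ comes from Brauer's permutation lemma together with $(|F|,|H|)=1$ and $C_F(h)=1$ for $h\in H^{\#}$; and when $\mathrm{char}\,k$ divides $|F|$ one must first split $F=O_p(F)\times O_{p'}(F)$ and note that $C_V(O_{p'}(F))=0$ is forced (otherwise the $p$-group $O_p(F)$ would have nonzero fixed points there), since Clifford theory needs semisimplicity of $V|_F$.

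The genuine gap is in the d\'evissage, at the clause ``using that the fixed-point condition $C_{(-)}(F)=1$ is inherited by the relevant factors.'' Without coprimality such inheritance is simply false for general actions: $\mathrm{Inn}(S_3)$ acts on $S_3$ with trivial fixed points, yet acts trivially on $S_3/A_3$. What saves it here is that $F$ is a Frobenius kernel, hence nilpotent (Thompson), and the statement ``$C_N(F)=1$ implies $C_{N/N_1}(F)=1$ for $FH$-invariant normal $N_1$'' is precisely the $F$-analogue of the lemma you are proving: it needs its own induction, via the same complement-conjugacy reduction, whose elementary abelian case rests on $H^1(F,V)=0$ for every $\mathbb{F}_pF$-module $V$ with $V^F=0$ --- proved by writing $F=O_p(F)\times O_{p'}(F)$, observing $V^{O_{p'}(F)}=0$ as above, and applying inflation-restriction. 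This is a separate, nontrivial companion lemma in \cite{KhMSh:14}, not a routine step. A second, related gap: your ``chief-type series'' has elementary abelian factors only if $N$ is soluble; under these hypotheses solubility does hold, but it is itself a CFSG-based theorem (Belyaev--Hartley: a finite group admitting a fixed-point-free nilpotent group of automorphisms is soluble), and your argument says nothing about a possible non-abelian chief factor, where the cohomological machinery has no traction. Consequently your closing assessment --- that all the difficulty is concentrated in the freeness lemma --- is not accurate: a comparable amount of work, and a second essential use of the Frobenius structure through the nilpotence of $F$, is hidden in the inheritance step you asserted in passing.
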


\begin{lemma}\label{for2} Suppose that a finite group $G$ admits a Frobenius group of automorphisms
$FH$ with kernel $F$ and complement $H$ such that 
$C_G(F)=1$. Then $G=\langle C_G(H)^f\ \vert\ {f\in F}\rangle$.
\end{lemma}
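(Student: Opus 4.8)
The plan is to induct on $|G|$, with Lemma~\ref{for1} as the main engine. Set $W=\langle C_G(H)^f\mid f\in F\rangle$; the goal is to prove $W=G$. Assuming $G\ne1$, I would choose a minimal $FH$-invariant normal subgroup $M$ of $G$, so that $M\ne1$ and $C_M(F)\le C_G(F)=1$. Since $M$ is an $FH$-invariant normal subgroup with $C_M(F)=1$, Lemma~\ref{for1} applies and gives
\[
C_{G/M}(H)=C_G(H)M/M .
\]
Generating by the $F$-conjugates of each side then turns this into $\langle C_{G/M}(H)^{\bar f}\mid\bar f\in F\rangle=WM/M$, since the $F$-conjugates of $C_G(H)M/M$ generate exactly $WM/M$.

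Now I would feed this into the induction. Granting for the moment that $C_{G/M}(F)=1$, the quotient $G/M$ again satisfies the hypotheses of the lemma, and because $M\ne1$ the inductive hypothesis yields $G/M=\langle C_{G/M}(H)^{\bar f}\rangle=WM/M$, i.e. $G=WM$. If moreover $M\ne G$, then applying the inductive hypothesis to $M$ itself (on which $FH$ acts with $C_M(F)=1$) gives $M=\langle C_M(H)^f\mid f\in F\rangle$, and as $C_M(H)\le C_G(H)$ this forces $M\le W$; combined with $G=WM$ this yields $G=W$, as required. Everything therefore reduces to the base case $M=G$, in which $G$ has no proper nontrivial $FH$-invariant normal subgroup and is consequently a direct product of isomorphic simple groups transitively permuted by $FH$.

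Two points carry the real weight, and I expect them to be the main obstacle. The first is the propagation statement $C_{G/M}(F)=1$ for an $FH$-invariant normal $M$. When $(|F|,|G|)=1$ this is immediate from Lemma~\ref{cc}(iii), which gives $C_{G/M}(F)=C_G(F)M/M=1$; but coprimality cannot be assumed here, since such Frobenius actions genuinely occur in the non-coprime case, and this is where the difficulty concentrates. I would attack it using the nilpotency of $F$ to handle one prime of $F$ at a time and, via the Fitting series, reduce to the case where the relevant section is an irreducible module over a field, exploiting there that $FH$ is Frobenius. The second point is the base case. In the abelian situation it is clean: such a $G$ is an irreducible $\mathbb F_p[FH]$-module, $W$ is an $FH$-submodule containing $C_G(H)$, so $W=G$ as soon as $C_G(H)\ne0$, the latter being the standard fact that a Frobenius complement has nonzero fixed points once its kernel acts fixed-point-freely (an averaging over $F^\#$ in the spirit of Lemma~\ref{rankexpo}); the product-of-simple-groups case requires the matching analysis of transitive $FH$-actions on the set of simple factors. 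These two ingredients, rather than the bookkeeping built on Lemma~\ref{for1}, are the crux of the argument.
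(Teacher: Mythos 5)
There is no proof in the paper to compare against: Lemma~\ref{for2}, like Lemma~\ref{for1}, is quoted from \cite{KhMSh:14} with the single sentence ``The following two lemmas were established in [KhMSh14]'', so your proposal has to stand as a self-contained argument. Its skeleton is sound: induction on $|G|$, a minimal $FH$-invariant normal subgroup $M$, Lemma~\ref{for1} to identify $\langle C_{G/M}(H)^{\bar f}\ \vert\ \bar f\in F\rangle$ with $WM/M$, and the inductive hypothesis applied to $G/M$ and (when $M\neq G$) to $M$ itself. That bookkeeping is correct, up to the harmless point that the action of $FH$ on $M$ or on $G/M$ may be unfaithful; since its kernel is normal in $FH$ and cannot contain $F$, it lies in $F$ and the induced group is again Frobenius.

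However, the two items you defer are not refinements of the argument --- they \emph{are} the argument, and your sketches would not close them. The decisive gap is the non-abelian case: nothing in your plan handles, or rules out, the possibility that $M$ (or $G$ itself) is a product of non-abelian simple groups. In the literature this case is never analyzed ``by hand''; it is excluded at the outset by the Belyaev--Hartley theorem that a finite group admitting a nilpotent group of automorphisms with trivial fixed-point subgroup is soluble (recall $F$ is nilpotent by Thompson's theorem), a result resting on the classification of finite simple groups. Solubility is also what your propagation step secretly needs: the workable proof that $C_{G/M}(F)=1$ is cohomological --- an $F$-fixed coset $gM$ yields a $1$-cocycle $f\mapsto g^{-1}g^{f}$ with values in $M$, so it suffices to know $H^1(F,M)=0$, which for an elementary abelian $p$-group $M$ with $C_M(F)=1$ follows from the nilpotency of $F$ (write $F=F_p\times F_{p'}$, use inflation--restriction, and note $C_M(F_{p'})$ must vanish) --- and this mechanism requires $M$ abelian, i.e.\ again solubility. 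Finally, the fixed-point fact in your abelian base case is true but is not obtained by ``averaging over $F^{\#}$ in the spirit of Lemma~\ref{rankexpo}'': if $C_V(F)=C_V(H)=0$, every trace you can form vanishes and no contradiction appears. What does work is the Frobenius partition $FH=F\sqcup\bigsqcup_{f\in F}\bigl(H^f\setminus\{1\}\bigr)$, which forces $|F|\,v=0$ for all $v\in V$ and settles the case $p\nmid|F|$, followed by an induction through $C_V(F_p)\neq 0$ when $p$ divides $|F|$; alternatively one cites the theorem of \cite{KhMSh:14} that such a $V$ is a free $kH$-module. In short, you reconstructed the framework correctly, but the load-bearing inputs --- solubility via Belyaev--Hartley, the non-coprime propagation of $C_G(F)=1$ to quotients, and $C_V(H)\neq 0$ --- are missing, and the first of these cannot be supplied by elementary means.
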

Here $\langle X\rangle$ denotes the subgroup generated by $X$. 
From \hyperref[for1]{Lemma~\ref{for1}} and \hyperref[for2]{Lemma~\ref{for2}} we deduce:

\begin{lemma}\label{fro}
Suppose that a profinite group $G$ admits a finite Frobenius group of automorphisms $FH$ 
with kernel $F$ and complement $H$ such that $C_G(F)=1$ and $(|G|,|F|)=1$.
Then $G=\langle C_G(H)^f\ \vert\ f\in F\rangle$.
\end{lemma}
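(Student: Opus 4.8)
The plan is to pass to finite quotients, apply the finite-group statements in \hyperref[for1]{Lemma~\ref{for1}} and \hyperref[for2]{Lemma~\ref{for2}}, and then take an inverse limit. Since $FH$ is finite and acts continuously, for every open normal subgroup $M$ of $G$ the intersection $\bigcap_{\alpha\in FH}\alpha(M)$ of the finitely many images $\alpha(M)$ is again open and normal and now $FH$-invariant; hence the $FH$-invariant open normal subgroups $N$ form a base of neighbourhoods of $1$, and $G=\varprojlim_N G/N$. Each $G/N$ is a finite group on which $FH$ acts as a Frobenius group of automorphisms. Moreover $C_N(F)\leq C_G(F)=1$, so by the coprimeness $(|G|,|F|)=1$ and item~(iii) of \hyperref[cc]{Lemma~\ref{cc}} we get $C_{G/N}(F)=C_G(F)N/N=1$, while $(|G/N|,|F|)=1$ as well. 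Thus each $G/N$ satisfies the hypotheses of Lemmas~\ref{for1} and~\ref{for2}.

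First I would establish the profinite analogue of \hyperref[for1]{Lemma~\ref{for1}}, namely that $C_{G/N}(H)=C_G(H)N/N$ for every $FH$-invariant open normal $N$. Given two such subgroups $M\leq N$, the normal subgroup $N/M$ of the finite group $G/M$ satisfies $C_{N/M}(F)=C_N(F)M/M=1$ by \hyperref[cc]{Lemma~\ref{cc}}(iii), so \hyperref[for1]{Lemma~\ref{for1}} applies to $G/M$ and yields $C_{G/N}(H)=\pi_{M,N}\bigl(C_{G/M}(H)\bigr)$, where $\pi_{M,N}\colon G/M\to G/N$ is the projection. In other words the finite sets $C_{G/M}(H)$ form an inverse system with surjective transition maps. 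Since an element $g\in G$ centralizes $H$ precisely when $[g,H]\subseteq M$ for every $M$, the inverse limit of this system is exactly $C_G(H)$; by compactness the projection $C_G(H)\to C_{G/N}(H)$ is therefore surjective, which is the desired equality.

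It then remains to combine the two ingredients. Put $D=\langle C_G(H)^f\ \vert\ f\in F\rangle$, a closed, $F$-invariant subgroup of $G$, whose image in $G/N$ is $\langle (C_G(H)N/N)^f\ \vert\ f\in F\rangle=DN/N$. For each $FH$-invariant open normal $N$, \hyperref[for2]{Lemma~\ref{for2}} applied to $G/N$ together with the previous paragraph gives
$$G/N=\langle C_{G/N}(H)^f\ \vert\ f\in F\rangle=\langle (C_G(H)N/N)^f\ \vert\ f\in F\rangle=DN/N,$$
so that $DN=G$. As $D$ is closed and the subgroups $N$ form a base of neighbourhoods of $1$, we conclude $D=\bigcap_N DN=G$, as required.

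The one genuinely non-formal point is the surjectivity established in the second paragraph: transporting the identity $C_{G/N}(H)=C_G(H)N/N$ to the profinite setting is not pure bookkeeping but rests on the finite Lemma~\ref{for1} (to produce surjective transition maps) and on the compactness of inverse limits of nonempty finite sets. Everything else—the reduction to $FH$-invariant finite quotients and the closed-subgroup argument $D=\bigcap_N DN$—is the routine inverse limit argument alluded to in the preliminary remarks.
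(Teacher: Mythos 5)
Your proposal is correct and follows essentially the same route as the paper: reduce to $FH$-invariant open normal subgroups, apply Lemma~\ref{for2} to each finite quotient (using Lemma~\ref{cc}(iii) to verify $C_{G/N}(F)=1$), and use Lemma~\ref{for1} to show $C_G(H)$ surjects onto each $C_{G/N}(H)$ before passing to the limit. Your write-up merely makes explicit the details (the inverse system with surjective transition maps, and the closed-subgroup argument $D=\bigcap_N DN$) that the paper compresses into its final sentence.
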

\begin{proof} 
Let $N$ be an $FH$-invariant normal open subgroup of $G$ and $Q=G/N$. 
The group $FH$ naturally acts on $Q$ and \hyperref[cc]{Lemma~\ref{cc}}(iii) states that $C_Q(F)=1$. 
Therefore, by \hyperref[for2]{Lemma~\ref{for2}}, $Q=\langle C_Q(H)^f\ \vert\ {f\in F}\rangle$. 
 \hyperref[for1]{Lemma~\ref{for1}} guarantees that $C_G(H)$ is the inverse limit of centralizers $C_{G/N}(H)$, where $N$ ranges through \mbox{$FH$-invariant} normal open subgroups of $G$ 
and so  $G=\langle C_G(H)^f\ \vert\ f\in F\rangle$, as required.
\end{proof}

We conclude this section by stating explicitly the  profinite version of the result which   is a combination of famous results of  Thompson \cite{Thompson:59} (saying that the group is nilpotent) and Higman  \cite{Higman:57} (bounding the nilpotency class). Its proof can be found in \cite{Shumyatsky:98} for example.

\begin{theorem}\label{higm} Let $p$ be a prime and  $G$ a pro-$p'$ group admitting a fixed-point-free automorphism  of order $p$. Then $G$ is nilpotent of class bounded by some number depending only on $p$.
\end{theorem}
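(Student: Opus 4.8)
The plan is to reduce to the classical finite theorems of Thompson and Higman by the standard inverse limit passage, with coprimality doing the essential work. Write $\alpha$ for the fixed-point-free automorphism of order $p$, so that $C_G(\alpha)=1$; since $G$ is a pro-$p'$ group we have $(|G|,p)=1$, and hence $(|G|,|\alpha|)=1$, which is precisely the hypothesis needed to invoke the coprime-action results of \hyperref[cc]{Lemma~\ref{cc}}.

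First I would exhibit a cofinal system of $\alpha$-invariant open normal subgroups. Given any open normal subgroup $M$ of $G$, the finite intersection $N=\bigcap_{i=0}^{p-1}M^{\alpha^i}$ is again open and normal, is contained in $M$, and is $\alpha$-invariant. As $M$ ranges over all open normal subgroups, the subgroups $N$ obtained in this way are cofinal, so $G=\varprojlim G/N$ with $N$ running over the $\alpha$-invariant open normal subgroups of $G$. Next I would verify that $\alpha$ acts fixed-point-freely on each finite quotient $G/N$: each such $G/N$ is a finite $p'$-group admitting $\alpha$ as an automorphism of order coprime to $|G/N|$, and \hyperref[cc]{Lemma~\ref{cc}}(iii) gives $C_{G/N}(\alpha)=C_G(\alpha)N/N$. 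Since $C_G(\alpha)=1$, this centralizer is trivial, so $\alpha$ is a fixed-point-free automorphism of prime order $p$ of the finite group $G/N$. By Thompson's theorem \cite{Thompson:59} each $G/N$ is therefore nilpotent, and by Higman's theorem \cite{Higman:57} its nilpotency class is at most $h(p)$, where $h$ is a function depending only on $p$.

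Finally, the uniform bound lifts to the inverse limit. The identity asserting the vanishing of all $(h(p)+1)$-fold commutators holds in every finite quotient $G/N$; since iterated commutation is continuous and $G$ embeds continuously into $\prod_N G/N$, the same identity holds in $G$, so $G$ is nilpotent of class at most $h(p)$. The only step requiring genuine care is the descent of the fixed-point-free condition from $G$ to the finite quotients $G/N$, and this is exactly what the coprimality assumption secures through \hyperref[cc]{Lemma~\ref{cc}}(iii); the remaining passage from finite quotients to $G$ is the routine inverse limit argument already flagged in \hyperref[s:prelim]{Section~\ref{s:prelim}}.
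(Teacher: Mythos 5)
Your proof is correct and follows exactly the route the paper itself takes: the paper states this result as the profinite version of Thompson's and Higman's finite theorems, obtained by the ``routine inverse limit argument'' flagged in Section~\ref{s:prelim} (with details deferred to \cite{Shumyatsky:98}), and your argument --- passing to the cofinal system of $\alpha$-invariant open normal subgroups, descending the fixed-point-free condition via Lemma~\ref{cc}(iii), applying Thompson--Higman to each finite quotient, and lifting the commutator identity to the inverse limit --- is precisely that argument, carried out carefully.
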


\section{Profinite CA-groups are virtually pronilpotent}
\label{s:virt_pronilp}

The goal of this section is 
to prove \hyperref[t:virt_pronilp_intro]{Theorem~\ref{t:virt_pronilp_intro}},
\textit{i.e.}, that each profinite CA-group is virtually abelian or virtually pro-$p$.


We begin with a fundamental lemma that was inspired by \cite[Thm.~4.10]{ZZ:17}.
Recall first
that an epimorphism of profinite groups
$\widetilde{\varphi}\colon \widetilde{G}\to G$
is a {\it universal Frattini cover} if $\widetilde{G}$ is projective
and $\Ker(\widetilde{\varphi})$ is contained in the Frattini subgroup $\Phi(\widetilde{G})$ of $\widetilde{G}$.
By \cite[Prop. 22.6.1]{FJ}
or  \cite[Lemma 2.8.15]{RZ:00},
{for each profinite group $G$ there exists such an
epimorphism.}

\begin{lemma}\label{l:main}
Let $G$ be a profinite CA-group 
and $\alpha\colon G\longrightarrow Q$ be 
an epimorphism of profinite groups. 
There exists a subgroup $H$ of $G$ such that 
the restriction $\beta=\alpha_{|H}$ is an epimorphism 
and $\Ker(\beta)$ is either 
pro-$p$ or abelian.
\end{lemma}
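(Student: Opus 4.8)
### Proof strategy for Lemma~\ref{l:main}

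The plan is to construct the subgroup $H$ by passing to a universal Frattini cover and exploiting the projectivity to lift the situation into a setting where the kernel is controlled by a single centralizer. Let me think carefully about what structure a CA-group forces on the kernel of an epimorphism.

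First, I would form the universal Frattini cover $\widetilde{\varphi}\colon\widetilde{G}\to G$, so that $\widetilde{G}$ is projective and $\Ker(\widetilde{\varphi})\leq\Phi(\widetilde{G})$. The composite $\alpha\circ\widetilde{\varphi}\colon\widetilde{G}\to Q$ is still an epimorphism. Because $\widetilde{G}$ is projective, I expect to be able to split off a complement or at least lift a suitable subgroup of $Q$ back up; the role of projectivity here is exactly to produce, for a closed subgroup realizing $Q$, a section behaving well with respect to the cover. The payoff of working over $\widetilde{G}$ is that the Frattini condition $\Ker(\widetilde{\varphi})\leq\Phi(\widetilde{G})$ lets me invoke Lemma~\ref{cc}(v): any coprime automorphism action that lands inside the Frattini subgroup must be trivial, which will be the mechanism forcing rigidity.

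The heart of the argument should be the CA-condition translated into a statement about commuting elements in the kernel. Set $K=\Ker(\alpha)$. The key dichotomy is between two cases according to whether $K$ has elements of more than one "commuting type". Concretely, I would fix a nontrivial element $x\in K$ and look at its centralizer $C_G(x)$, which by hypothesis is abelian. If $K$ itself is abelian we are done with $H=G$ and the "abelian" alternative. Otherwise $K$ is nonabelian, so there exist $a,b\in K$ with $[a,b]\neq 1$; then $a\notin C_G(b)$ and the transitivity of commutation means that the various centralizers $C_G(z)$ for $z\in K^{\#}$ partition $K^{\#}$ into commuting classes. The aim is to show that in the nonabelian case all of $K$ is forced into a single Sylow pro-$p$ subgroup, yielding the "pro-$p$" alternative. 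I would prove this by arguing that if $K$ contained elements of two distinct prime orders $p\neq q$ that fail to commute, one obtains a coprime action (an element of one order acting by conjugation on a $q$-subgroup) whose fixed points are constrained by Lemma~\ref{cc}; combined with commutativity-transitivity this produces a contradiction unless $K$ is a pro-$p$ group.

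To assemble $H$, I would then take $H$ to be the preimage under $\widetilde{\varphi}$ (or an appropriate projective section) of a minimal closed subgroup still mapping onto $Q$, and transport the pro-$p$/abelian dichotomy for $K$ down to $\Ker(\beta)=H\cap K$. The main obstacle, as I see it, is the passage between $\widetilde{G}$ and $G$: I must ensure that the subgroup $H$ produced by projectivity genuinely satisfies $\beta=\alpha_{|H}$ is surjective while simultaneously having its kernel inherit the pro-$p$ or abelian structure, and that the Frattini-triviality from Lemma~\ref{cc}(v) is applied to a legitimately coprime action. I expect the delicate point to be verifying that the commuting-class analysis survives the inverse-limit and cover constructions, i.e.\ that minimality of $H$ (so that no proper closed subgroup surjects onto $Q$) forces $\Ker(\beta)$ to lie inside a single centralizer, which is precisely where the CA-hypothesis does the decisive work.
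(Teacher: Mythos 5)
Your approach has two fatal problems and misses the one mechanism that makes the lemma work. First, you build the universal Frattini cover of $G$, but the construction needs the cover of $Q$: one takes $\widetilde{\varphi}\colon\widetilde{Q}\to Q$ and uses projectivity of $\widetilde{Q}$ to lift $\widetilde{\varphi}$ through the given epimorphism $\alpha$, obtaining $\omega\colon\widetilde{Q}\to G$ with $\alpha\omega=\widetilde{\varphi}$; then $H=\omega(\widetilde{Q})$ maps onto $Q$ and $\Ker(\alpha_{|H})=\omega(\Ker(\widetilde{\varphi}))$ is a continuous image of a closed subgroup of $\Phi(\widetilde{Q})$. Projectivity of a cover $\widetilde{G}\to G$ points the wrong way: it lifts maps \emph{out of} $\widetilde{G}$ through epimorphisms, which here only reproduces data you already have and yields no subgroup of $G$. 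Second, and more seriously, the dichotomy you try to establish for $K=\Ker(\alpha)$ --- that $K$ nonabelian forces $K$ pro-$p$ --- is false. Let $G=\mathbb{Z}_5\rtimes C_4$, with $C_4$ acting by multiplication by an element of order $4$ in $\mathbb{Z}_5^{*}$; this is a Frobenius profinite group with abelian kernel and cyclic complement, hence a CA-group. Taking $Q=C_2$ and $\alpha$ the obvious projection, $K=\mathbb{Z}_5\rtimes C_2$ is nonabelian and not pro-$p$. The conclusion of the lemma concerns $\Ker(\beta)$ for a suitably chosen proper subgroup $H$ (here $H=C_4$ works, with kernel $C_2$), so any plan that first proves the dichotomy for $\Ker(\alpha)$ and then ``transports'' it down to $H\cap K$ starts from a false premise.

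The missing idea is pronilpotency, not coprime actions: Lemma~\ref{cc}(v) plays no role in the argument, and your appeal to ``elements of two distinct prime orders'' is unavailable anyway, since elements of a profinite group may have infinite order. The kernel one constructs lies inside a Frattini subgroup; Frattini subgroups of profinite groups are pronilpotent; and a pronilpotent CA-group is pro-$p$ or abelian, because in a direct product of Sylow subgroups elements of coprime order commute, so if two Sylow subgroups were nontrivial then transitivity of commutation would force every Sylow subgroup, hence the whole group, to be abelian. Your parenthetical suggestion of a minimal closed subgroup $H$ with $\alpha(H)=Q$ is in fact the germ of a correct alternative proof that avoids covers altogether: such an $H$ exists by Zorn's lemma and compactness, and minimality forces $\Ker(\beta)\leq\Phi(H)$, since otherwise some maximal open subgroup $M$ of $H$ would satisfy $\Ker(\beta)M=H$ and hence $\alpha(M)=Q$; from there the pronilpotency argument finishes. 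But your stated goal for minimality --- that $\Ker(\beta)$ lie ``inside a single centralizer'' --- is the wrong target: that would make $\Ker(\beta)$ abelian, which is strictly stronger than what the lemma claims and cannot be achieved in general, since the pro-$p$ alternative is genuinely needed.
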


\begin{proof} 
Let $\widetilde{\varphi} \colon \widetilde{Q}\longrightarrow Q$ be a universal Frattini cover of $Q$.
Since $\widetilde{Q}$ is projective, $\widetilde{\varphi}$ lifts to a 
homomorphism $\omega\colon{\widetilde{Q}\longrightarrow G}$, 
so $\alpha\omega=\widetilde{\varphi}$. 
Put $H=\omega(\widetilde{Q})$. 
Then $\Ker(\beta)$ coincides with $\omega(\Ker(\widetilde{\varphi}))$; 
since the Frattini subgroup $\Phi(\widetilde Q)$ is pronilpotent,
so is $\Ker(\beta)$.
Thus $\Ker(\beta)$ is a direct product of its Sylow subgroups 
hence, being it a CA-group, it is either pro-$p$ or abelian.
\end{proof}

\begin{theorem} \label{c:torsion-free}
A torsion-free profinite CA-group is either pro-$p$ or abelian. 
\end{theorem}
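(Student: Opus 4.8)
The plan is to reduce the statement to the pronilpotent case and then invoke the commutativity-transitivity argument already used at the close of \hyperref[l:main]{Lemma~\ref{l:main}}. First I would record that a \emph{pronilpotent} profinite CA-group is automatically pro-$p$ or abelian: such a group is the direct product of its Sylow subgroups, and if two distinct Sylow subgroups were nontrivial then any noncommuting pair inside a single nonabelian Sylow subgroup would be forced to commute through a nontrivial element of another Sylow subgroup, contradicting the CA-property; hence either there is a single Sylow subgroup (and $G$ is pro-$p$) or every Sylow subgroup is abelian (and $G$ is abelian). Thus it suffices to prove that a torsion-free profinite CA-group $G$ is pronilpotent, i.e. that every finite continuous quotient $Q=G/N$ is nilpotent.

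Fix such a finite quotient $Q$ and apply \hyperref[l:main]{Lemma~\ref{l:main}} to the projection $\alpha\colon G\to Q$. This yields a subgroup $H\leq G$ with $\beta=\alpha_{|H}\colon H\twoheadrightarrow Q$ and $K:=\Ker(\beta)$ either pro-$p$ or abelian, so that $H/K\cong Q$; note that $H$, being a subgroup of $G$, is again a torsion-free CA-group. I would then treat the two possibilities for $K$ separately. If $K$ is pro-$p$, I claim $Q$ is a $p$-group: for any $\bar x\in Q$ whose order $m$ is coprime to $p$, the preimage in $H$ of $\langle\bar x\rangle$ is an extension of a cyclic $p'$-group by the pro-$p$ group $K$, which splits by the (profinite) Schur--Zassenhaus theorem and produces a finite subgroup of order $m$ inside the torsion-free group $H$, forcing $m=1$. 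Hence every element of $Q$ has $p$-power order, so $Q$ is a finite $p$-group, in particular nilpotent.

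If $K$ is abelian, the main work is to expose a Frobenius-type structure on $H$. Writing $C=C_H(K)$, the CA-property forces $C=C_H(k)$ for every $1\neq k\in K$, since any element centralizing one nontrivial $k$ lies in the abelian group $C_H(k)\supseteq K$ and hence centralizes all of $K$. Consequently $C$ is a maximal abelian, self-centralizing, normal subgroup of $H$, and the same CA-argument shows that each $h\in H\setminus C$ centralizes no nontrivial element of $C$; thus $\bar H=H/C$ acts faithfully and fixed-point-freely on $C$. A fixed-point-free automorphism of order $\ell$ of a nontrivial pro-$\ell$ group is impossible (it would fix a nontrivial element of the centre of the associated pro-$\ell$ semidirect product), so no prime dividing $|\bar H|$ can divide $|C|$, and the action is coprime. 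Schur--Zassenhaus then splits the extension $1\to C\to H\to \bar H\to 1$, producing a finite complement inside the torsion-free group $H$; this forces $\bar H=1$, so $H=C$ is abelian and $Q\cong H/K$ is abelian, again nilpotent.

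I expect the abelian-kernel case to be the main obstacle: the pro-$p$ case collapses quickly through Schur--Zassenhaus, whereas the abelian case requires first extracting the self-centralizing normal abelian subgroup $C$ from the CA-hypothesis, and then proving the coprimeness of the fixed-point-free action (via the elementary but crucial fact that a pro-$\ell$ group admits no fixed-point-free automorphism of order $\ell$) before torsion-freeness can be applied. Once both cases yield a nilpotent $Q$, every finite quotient of $G$ is nilpotent, so $G$ is pronilpotent, and the opening reduction finishes the proof.
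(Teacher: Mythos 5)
Your overall route is the same as the paper's: apply Lemma~\ref{l:main} to a finite quotient $Q$, note that the resulting subgroup $H$ is a torsion-free CA-group which is (virtually) pro-$p$ or (virtually) abelian, and use torsion-freeness to kill the finite part. Your opening reduction (pronilpotent CA-groups are pro-$p$ or abelian) and your pro-$p$-kernel case (Schur--Zassenhaus plus torsion-freeness) are correct. The problem is in the abelian-kernel case: the ``elementary but crucial fact'' that a nontrivial pro-$\ell$ group admits no fixed-point-free automorphism of order $\ell$ is \emph{false}. Inversion $x\mapsto x^{-1}$ is a fixed-point-free automorphism of order $2$ of $\mathbb{Z}_2$, and for any prime $\ell$, multiplication by a primitive $\ell$-th root of unity $\zeta_\ell$ is a fixed-point-free automorphism of order $\ell$ of the additive group of $\mathbb{Z}_\ell[\zeta_\ell]\cong\mathbb{Z}_\ell^{\,\ell-1}$. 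Your parenthetical justification tacitly assumes that a nontrivial pro-$\ell$ group has nontrivial centre; that is a theorem about \emph{finite} $\ell$-groups only, and the semidirect products built from the examples above have trivial centre. The failure is not hypothetical in the CA setting either: the pro-$2$ dihedral group $\mathbb{Z}_2\rtimes C_2$ is a CA-group in which a group of order $2$ acts fixed-point-freely on a pro-$2$ group, so CA plus fixed-point-freeness genuinely cannot yield the coprimality you need, and the Schur--Zassenhaus splitting resting on it is not established.

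The gap is local and easily bridged, and the repair makes Schur--Zassenhaus unnecessary in this case: apply torsion-freeness \emph{before} any coprimality considerations. Since $K\subseteq C$, the group $\bar H=H/C$ is a quotient of $H/K\cong Q$, hence finite. If $h\in H\setminus C$ and $m$ is the order of $hC$ in $\bar H$, then $h^m\in C$ and $h$ centralizes $h^m$; by your own (correct) observation that elements of $H\setminus C$ centralize no nontrivial element of $C$, we get $h^m=1$, and torsion-freeness forces $h=1\in C$, a contradiction. Hence $\bar H=1$ and $H=C$ is abelian, so $Q$ is abelian. This is exactly the mechanism the paper uses in Section~\ref{s:virt_abel}, where it is observed that every element outside the maximal abelian normal subgroup of a virtually abelian CA-group has finite order. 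You should also dispose of the trivial case $K=1$ (where there is no nontrivial $k$ from which to build $C$): there $H\cong Q$ is finite and torsion-free, hence trivial. With these changes your argument is complete and is, in substance, the paper's proof written out in detail.
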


\begin{proof} 
Let $G$ be a torsion-free profinite CA-group. 
By \textit{reductio ad absurdum}, 
let $Q$ be a finite quotient of $G$ that is neither a $p$-group nor abelian. 
Then the subgroup 
$H$ in the statement of \hyperref[l:main]{Lemma~\ref{l:main}}, is either virtually pro-$p$ or virtually abelian. 
Since $H$ is torsion-free and CA, it is either pro-$p$ or abelian;
hence so is $Q$.
\end{proof}

The next lemma provides an important technical tool for dealing with profinite CA-groups.

\begin{lemma}\label{l:normaliz-centraliz} 
Let $p$ be a prime and $A$ an infinite abelian pro-$p'$ subgroup of a profinite CA-group $G$. 
If $A$ normalizes a pro-$p$ subgroup $P$ of $G$, then $PA$ is an abelian subgroup of $G$.
\end{lemma}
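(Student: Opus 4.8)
The plan is to show that $A$ centralizes $P$; this suffices, for once $[P,A]=1$ I may fix any $a\in A^{\#}$ (possible since $A$ is infinite) and note that both $P$ and $A$ lie in $C_G(a)$, which is abelian by the CA-hypothesis, so that $PA=AP$ is contained in an abelian group and is therefore abelian. The engine of the argument is the uniformity $C_P(a)=C_P(A)$ for every $a\in A^{\#}$: if $1\neq x\in C_P(a)$ then $x\in C_G(a)$, an abelian subgroup containing the abelian group $A$, whence $x$ commutes with all of $A$ and so $x\in C_P(A)$ (the reverse inclusion being obvious). Thus the fixed-point subgroup $C_P(a)$ is independent of the nontrivial $a$.

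To prove $[P,A]=1$ I would argue quotient by quotient. The $A$-invariant open normal subgroups of $P$ form a base of neighbourhoods of $1$: since $A$ permutes continuously the finitely many open normal subgroups of a given index, each has open stabiliser in $A$, and the intersection of its (finite) $A$-orbit is an $A$-invariant open normal subgroup contained in it. Hence it is enough to show $[P,A]\subseteq M$ for every such $M$, that is, that $A$ acts trivially on each finite quotient $\bar P=P/M$. Here infiniteness enters decisively: the induced map $A\to\textup{Aut}(\bar P)$ has finite image, so its kernel $K$ is open in $A$ and therefore nontrivial, and I may fix $1\neq b\in K$.

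I would then invoke the coprime decomposition $P=C_P(b)[P,b]$, the profinite form of \ref{cc}(i) applied to the procyclic pro-$p'$ subgroup $\overline{\langle b\rangle}$, which is legitimate by the routine inverse-limit argument of Section~\ref{s:prelim}. By the uniformity above $C_P(b)=C_P(A)$, while $[P,b]\subseteq M$ because $b$ acts trivially on $\bar P$; hence $P=C_P(A)\,M$. Thus $\bar P$ is the image of $C_P(A)$, which $A$ centralizes, so $A$ acts trivially on $\bar P$, i.e. $[P,A]\subseteq M$. Letting $M$ range over the base gives $[P,A]=1$, and the reduction in the first paragraph finishes the proof.

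The chief obstacle, and the point requiring care, is that the coprime-action results are stated for \emph{finite} groups of automorphisms, whereas $A$ is infinite; the remedy is to descend to the finite quotient $\bar P$ and to use infiniteness at exactly one essential place, namely the existence of a nontrivial $b$ lying in the kernel of the action on $\bar P$. That infiniteness is genuinely indispensable is clear from finite Frobenius CA-groups with abelian kernel $P$ and abelian complement $A$: there $A$ acts fixed-point-freely on $P$ and $PA$ is non-abelian, so the conclusion fails without the hypothesis that $A$ be infinite.
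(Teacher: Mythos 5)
Your proof is correct in substance, but it takes a genuinely different and in fact shorter route than the paper's. The paper also begins by reducing to $G=PA$ and observing the same uniformity $C_G(a)=C_G(A)$ for $a\in A^{\#}$, but it then splits into cases according to the torsion of $A$: unbounded torsion is ruled out by applying Lemma~\ref{cc}(iii) to \emph{finite-order} elements acting trivially on finite quotients, noncyclic subgroups of order $q^2$ are ruled out via Lemma~\ref{cc}(iv), and the remaining case (where one may assume $A$ torsion-free) is handled globally: a finite non-nilpotent quotient $G/N$ is lifted through Lemma~\ref{l:main} (the universal Frattini cover) to a subgroup $H$ with $N\cap H$ pronilpotent, and a Hall-subgroup argument inside $H$ yields the contradiction. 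You bypass the torsion case analysis and Lemma~\ref{l:main} entirely by arguing quotient-by-quotient in $P$, using infiniteness of $A$ only to produce a nontrivial $b$ acting trivially on $P/M$, and then letting the uniformity $C_P(b)=C_P(A)$ upgrade triviality of the $b$-action to triviality of the $A$-action modulo $M$. The price is that you must apply the coprime decomposition to $\overline{\langle b\rangle}$, which may be an \emph{infinite} procyclic pro-$p'$ group (e.g. when $A$ is torsion-free), whereas Lemma~\ref{cc} is stated for finite actors; this is precisely the point at which the paper retreats to its case distinction. The extension you need is true and genuinely routine (the action of $\overline{\langle b\rangle}$ on each finite quotient of $P$ factors through a finite quotient of coprime order, and one passes to the inverse limit, exactly as for Lemma~\ref{cc}(iii)), and the paper's stated conventions license such extensions, but it deserves to be spelled out since it goes beyond the lemma as literally stated.

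Two of your justifications need repair, though neither affects the architecture. First, your claim that $P$ has only finitely many open normal subgroups of a given index is false when $P$ is not topologically finitely generated: an infinite Cartesian power of $\mathbb{Z}/p\mathbb{Z}$ has infinitely many open subgroups of index $p$. The conclusion you want --- a base at $1$ of $A$-invariant open normal subgroups of $P$ --- is nevertheless true and is obtained more directly: $PA$ is a profinite group in which $P$ is a closed normal subgroup, so the subgroups $W\cap P$, with $W$ ranging over the open normal subgroups of $PA$, are open and normal in $P$, are $A$-invariant, and form a base at $1$. (Alternatively, a compactness/tube-lemma argument shows that $N_A(U)$ is open in $A$ for any open $U\le P$, which rescues your orbit-intersection argument.) Second, ``finite image implies open kernel'' is not valid for abstract homomorphisms from profinite groups; the kernel of $A\to\mathrm{Aut}(P/M)$ is open because the induced action of $A$ on the finite discrete group $P/M$ is continuous, so each point stabilizer is open and the kernel is a finite intersection of them. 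With these two repairs your argument is complete and constitutes a cleaner proof than the one in the paper.
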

\begin{proof} 
Without loss of generality we assume that $G=PA$. 
Note that since $A$ is abelian, $C_G(a)=C_G(A)$ for any nontrivial $a\in A$. 
If $G$ is abelian there is nothing to prove, so we assume that $G$ is not abelian. 
Since $P$ and $A$ are of coprime order, it follows that $G$ is not nilpotent.

Let us show that there exists a bound on the order of torsion elements in $A$. 
Suppose that this is false. 
Let $U$ be an open normal subgroup of $G$; say $U\cap P$ has index $n$ in $P$.
Pick a torsion element $a\in A$ whose order is at least $n!$. 
It is clear that a nontrivial power of $a$, say $a^i$, acts trivially on $P/(U\cap P)$. 
\hyperref[cc]{Lemma~\ref{cc}}(iii) shows that $P=(U\cap P)C_P(a^i)$. 
Hence, $P=(U\cap P)C_P(A)$.
This happens for every open normal subgroup $U$ of $G$.
Therefore  $P=C_P(A)$ and so $G$ is abelian. 
Thus, indeed there exists a bound on the order of torsion elements in $A$.

Suppose now that $A$ contains a noncyclic subgroup $B$ of order $q^2$ for some prime~$q$. 
By \hyperref[cc]{Lemma~\ref{cc}}(iv), $P=\prod_{b\in B^{\#}}C_{P}(b)$ 
and so again $P=C_P(A)$ and $G$ is abelian. 
Therefore without loss of generality we may assume that $A$ has no noncyclic finite subgroups. 
Hence, torsion subgroups in $A$ are cyclic of bounded order. 
It follows that $A$ has an open torsion-free subgroup, say $A'$. 
Since $G$ is CA, it is sufficient to show that $PA'$ is abelian and, 
replacing  $A$ by $A'$ if necessary,  we assume without loss of generality that $A$ is torsion-free.

Since $G$ is non-abelian, it has a finite non-nilpotent quotient $G/N$. 
\hyperref[l:main]{Lemma~\ref{l:main}} shows that $G$ has a subgroup $H$ such that $G=NH$ and $N\cap H$ is pronilpotent. 
Let $A_1$ be a $p'$-Hall subgroup of $H$ and set $P_1=P\cap H$.
Since the image of $H$ in $G/N$ is not nilpotent, we observe that both subgroups $A_1$ and $P_1$ are nontrivial. 
Further, since $A_1$ is torsion-free, it follows that $N\cap A_1\neq1$. 
Set $A_0=A_1\cap N$. 
Since $A_0$ is characteristic subgroup of the pronilpotent group $N\cap H$, 
it is normal in $H$  and moreover $A_0$ normalizes $P_1$. 
We conclude that  $A_0$ centralizes $P_1$ and therefore $P_1\leq C_G(A)$. 
This implies that $H$ is abelian. 
This is a contradiction since the image of $H$ in $G/N$ is not nilpotent.
\end{proof}

Throughout the article we use the celebrated results of Zelmanov \protect{\cite{Zelmanov:92}} 
that every compact torsion group is locally finite and 
every infinite compact group contains an infinite abelian subgroup.
This in particular allows to extend \hyperref[t:finite_CA]{Theorem~\ref{t:finite_CA}} to profinite torsion groups: 
A non-abelian infinite profinite torsion CA-group is a Frobenius group with an abelian kernel of finite exponent and a cyclic complement (see \cite[Sec.~4.6]{RZ:00} for the definition of a Frobenius profinite group).

The \emph{Fitting height} of a prosoluble group $G$ is the length $h(G)$ of a shortest series of normal subgroups all of whose quotients are pronilpotent. The parameter $h(G)$ is finite if, and only if, $G$ is an inverse limit of finite soluble groups of bounded Fitting height.

Also, recall that a Sylow basis in a group $G$ is a pairwise permutable family $\{P_1, P_2, \dots\}$ of $p_i$-Sylow subgroups $P_i$ of $G$, exactly one for each prime. Moreover, the \emph{basis normalizer} of such Sylow basis in $G$ is $\bigcap_i N_G(P_i)$. This subgroup is also known under the name of system normalizer. If $G$ is a profinite group and $T$ is a basis normalizer in $G$, then $T$ is pronilpotent and $G=\gamma_\infty(G)T$, where $\gamma_\infty(G)$ denotes the intersection of the terms of the lower central series of $G$. Furthermore, every prosoluble group $G$ posseses a Sylow basis and any two basis normalizers in $G$ are conjugate (see \cite{Bolker:63} or \cite[Prop.~2.3.9]{RZ:00}). Moreover,  from \cite[Thm.~9.2.7]{Robinson:96} one deduces by the inverse limit argument that if $\gamma_\infty(G)$ is abelian, then $\gamma_\infty(G)$ intersects $T$  trivially.

\begin{lemma}\label{l:Fh3} 
Let $G$ be a prosoluble CA-group. 
If a basis normalizer in $G$ is infinite, then the Fitting height of $G$ is at most 3. 
\end{lemma}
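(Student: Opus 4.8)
The plan is to exploit the infinite basis normalizer $T$ as a large abelian group of ``coprime'' automorphisms, via Lemma~\ref{l:normaliz-centraliz}, to force almost all Sylow subgroups to be centralized by $T$. First I would record two consequences of the CA-hypothesis. Since $T$ is pronilpotent and nontrivial, its centre is nontrivial; choosing $1\neq z\in Z(T)$ we have $T\le C_G(z)$, and $C_G(z)$ is abelian, so $T$ is abelian. The identical argument applied to the centre of a nontrivial Sylow pro-$q$ subgroup $P_q$ shows that every Sylow subgroup of $G$ is abelian. Because $G=\gamma_\infty(G)\,T$, the quotient $G/\gamma_\infty(G)$ is a quotient of the abelian group $T$, hence abelian; thus it will suffice to produce a normal series of length at most $3$.

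Next I would analyse $T=\prod_q T_q$, where $T_q$ is its Sylow pro-$q$ subgroup. For a prime $p$ put $T_{p'}=\prod_{q\neq p}T_q$, an abelian pro-$p'$ group normalising $P_p$ (as $T$ normalises every member of the Sylow basis). Whenever $T_{p'}$ is infinite, Lemma~\ref{l:normaliz-centraliz} makes $P_pT_{p'}$ abelian, so $T_{p'}$ centralises $P_p$; since $T_p\le P_p$, the whole of $T$ then centralises $P_p$, and in particular $P_p\le C_G(T)$. The point is that, $T$ being infinite and pronilpotent, $T_{p'}$ is infinite for every prime $p$ with at most one exception $p_0$ (the only way this can fail is that a single $T_{p_0}$ is infinite while all remaining $T_q$ are finite and almost all trivial). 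This yields a clean dichotomy: if there is no exceptional prime, then $T$ centralises every $P_q$, all $P_q$ lie in the abelian group $C_G(T)$, and therefore $G=\overline{\langle P_q\rangle}$ is abelian; otherwise, for the exceptional prime $p_0=p$ the Hall $p'$-subgroup $H=\prod_{q\neq p}P_q$ is a product of pairwise commuting abelian subgroups inside $C_G(T)$, hence abelian.

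In the remaining case we have $G=PH$ with $P=P_p$ and $H$ both abelian, and the task is to convert this factorisation into a Fitting-height bound. Here I would invoke Itô's theorem: a group that is the product of two abelian subgroups has abelian derived subgroup, and by the routine inverse-limit argument the same holds for the profinite $G$. Then $\gamma_\infty(G)\le G'$ is abelian, so $1\le G'\le G$ is a normal series with pronilpotent (indeed abelian) quotients, giving Fitting height at most $2\le 3$. If one prefers not to use Itô, the fact that every Sylow of $G$ is abelian makes $G$ a prosoluble $A$-group, for which the classical Taunt bound gives Fitting height at most $3$ directly. The main obstacle---and the only place the hypothesis is genuinely used---is the dichotomy step: everything hinges on Lemma~\ref{l:normaliz-centraliz}, which requires an infinite abelian pro-$p'$ group acting, and it is precisely the infinitude of the basis normalizer that supplies such a group for all but one prime.
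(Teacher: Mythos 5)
Your proof breaks at the very first step, and the damage propagates through everything that follows. The claim that a nontrivial pronilpotent profinite group has nontrivial centre is true for finite nilpotent groups but false for pronilpotent profinite groups: a nonabelian free pro-$p$ group is pro-$p$ (hence pronilpotent) and has trivial centre. This is not a peripheral pathology but exactly a configuration the lemma must handle: if $G$ is a nonabelian free pro-$p$ group, then $G$ is a prosoluble CA-group (centralizers of nontrivial elements are procyclic), its Sylow basis is essentially $\{G\}$, and its basis normalizer is $T=G$ itself, which is infinite. For this $G$ both of your preliminary conclusions fail: $T$ is not abelian, and the Sylow $p$-subgroup of $G$ (namely $G$ itself) is not abelian. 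With that, the rest of the argument collapses: Lemma~\ref{l:normaliz-centraliz} requires the acting group to be \emph{abelian}, so you may not apply it to $T_{p'}$; the factorisation $G=PH$ with \emph{both} factors abelian is unavailable, so It\^o's theorem does not apply; and $G$ need not be an $A$-group at all. Incidentally, the fallback you cite is also not a theorem: finite soluble $A$-groups have unbounded Fitting height (every Fitting factor of an $A$-group is abelian, so Fitting height is at least derived length, and there are $A$-groups of arbitrarily large derived length), so no ``Taunt bound of $3$'' exists.

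The paper's proof is organised precisely so as never to claim that $T$ or the Sylow subgroups of $G$ are abelian. It splits on whether $T$ has an infinite Sylow subgroup $T_p$. If so, Zelmanov's theorem \cite{Zelmanov:92} supplies an infinite \emph{abelian} pro-$p$ subgroup $A\le T_p$, and Lemma~\ref{l:normaliz-centraliz} applied to $A$ (not to $T$) shows that a $p'$-Hall subgroup of $G$ lies in the abelian group $C_G(A)$; then the profinite version of \cite[Thm.~6.3.2]{Gorenstein:80} gives $G=O_{p,p',p}(G)$, hence $h(G)\le 3$, with no assumption whatsoever on the Sylow $p$-subgroup of $G$. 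If instead all Sylow subgroups of $T$ are finite, the paper takes an infinite procyclic $B\le T$, whose pro-$p'$ part is of finite index in $B$ for every $p$, and deduces from Lemma~\ref{l:normaliz-centraliz} and the CA-property that $B\le Z(G)$, so $G$ is abelian. Parts of your skeleton can be repaired --- for instance, if $\pi(T)$ contains two distinct primes then the CA-property really does force $T$ to be abelian, since $T_p$ and $T_{p'}$ centralise each other and each lies in the centralizer of a nontrivial element of the other --- but the irreducible case is exactly $T$ pro-$p$ and nonabelian, and handling it requires Zelmanov's theorem plus the $O_{p,p',p}$ argument in place of It\^o, which is essentially the paper's proof.
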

\begin{proof} 
Let $T$ be a basis normalizer in $G$. 
Suppose that $T$ has an infinite $p$-Sylow subgroup. 
Then, by Zelmanov's result, $T$ contains an infinite abelian pro-$p$ subgroup $A$.
If $Q$ is a $q$-Sylow subgroup of $G$ normalized by $A$, 
\hyperref[l:normaliz-centraliz]{Lemma~\ref{l:normaliz-centraliz}} 
shows that $A\subseteq C_G(Q)$ whenever $q\neq p$. 
Choose a $p'$-Hall subgroup $H$ of $G$ normalized by $A$. 
It follows that $H\subseteq C_G(A)$ and so $H$ is abelian. 
Thus, $G$ has an abelian $p'$-Hall subgroup. 
We deduce that $G=O_{p,p',p}(G)$ 
(by the profinite version of 
\cite[Thm.~6.3.2]{Gorenstein:80})
and $h(G)\leq3$. 

Finally, suppose all Sylow subgroups of $T$ are finite. 
Since $T$ is infinite and pronilpotent, we can choose an infinite procyclic subgroup $B$ of $T$. Because the Sylow subgroups of $T$ are finite, $B$ contains a $p'$-subgroup of finite index for each prime $p$.
\hyperref[l:normaliz-centraliz]{Lemma~\ref{l:normaliz-centraliz}} shows that 
if $P$ is any Sylow subgroup of $G$ normalized by $B$,
then $P$ centralizes a subgroup of finite index in $B$ 
and we deduce that $P$ centralizes $B$. Recall that $B$ lies in a basis normalizer in $G$ and hence $B$ normalizes a $p$-Sylow subgroup of $G$ for each prime divisor $p$ of the order of $G$. Therefore $B\subseteq Z(G)$ and $G$ must be abelian.
\end{proof}

\begin{lemma}\label{fitting} 
The Fitting height of a prosoluble CA-group is at most~5.
\end{lemma}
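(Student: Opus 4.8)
The plan is to split the argument according to whether a basis normalizer of $G$ is infinite or finite, the infinite case being already available. If $G$ is finite, then \hyperref[t:finite_CA]{Theorem~\ref{t:finite_CA}} shows it is abelian or a Frobenius group with abelian kernel and cyclic complement, so $h(G)\le 2$; hence I may assume $G$ infinite. Fix a basis normalizer $T$ in $G$. If $T$ is infinite, then \hyperref[l:Fh3]{Lemma~\ref{l:Fh3}} gives $h(G)\le 3$ and there is nothing left to do. Thus the whole difficulty is concentrated in the case where every basis normalizer of $G$ is finite.

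In that case I would first record the reduction it permits. Since $G=\gamma_\infty(G)T$ with $T$ finite, the quotient $G/\gamma_\infty(G)\cong T/(T\cap\gamma_\infty(G))$ is finite; being also pronilpotent, it is finite nilpotent. Hence $M_1:=\gamma_\infty(G)$ is open in $G$ and $h(G)\le h(M_1)+1$, so it suffices to bound $h(M_1)$ by $4$. Now $M_1$ is again an infinite prosoluble CA-group, and I apply the same dichotomy to it: if a basis normalizer of $M_1$ is infinite, \hyperref[l:Fh3]{Lemma~\ref{l:Fh3}} yields $h(M_1)\le 3$, whence $h(G)\le 4$. Otherwise the basis normalizers of $M_1$ are finite and, exactly as before, $M_2:=\gamma_\infty(M_1)$ is open in $M_1$ with $h(M_1)\le h(M_2)+1$. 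The desired bound then follows as soon as $M_2$ has an infinite basis normalizer (or is pronilpotent), since then $h(M_2)\le 3$, giving $h(M_1)\le 4$ and $h(G)\le 5$.

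The structural input I would use to force the descent to stop is that the Fitting subgroup of any prosoluble CA-group is abelian and self-centralizing. Writing $F=F(G)=\prod_p F_p$ as the product of its Sylow subgroups, each $F_p$ is normal and pro-$p$, so $Z(F_p)\neq 1$; choosing $1\neq z\in Z(F_p)$ gives $F_p\le C_G(z)$, and $C_G(z)$ is abelian because $G$ is CA, so every $F_p$, and hence $F$, is abelian. Combining this with the standard fact $C_G(F)\le F$ (valid in prosoluble groups by the inverse limit argument) yields $C_G(x)=F$ for every $1\neq x\in F$, so that $C_F(g)=1$ for every $g\notin F$; a short coprimality check (using that $F_p\neq 1$ would force every $p$-element into $F$) then shows that $G$ is a Frobenius group with abelian kernel $F$ and complement $G/F$. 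The complement is itself a prosoluble CA-group whose Sylow subgroups are cyclic or generalized quaternion, and it is this progressively more restricted structure, together with the allowed complement types (cyclic, generalized quaternion, and $\textup{SL}(2,3)$-like), that I would exploit to show the chain $G\ge M_1\ge M_2$ leaves the finite-basis-normalizer regime after at most two steps.

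The main obstacle is precisely this last point: proving that the nilpotent-residual descent cannot remain in the finite-basis-normalizer case for more than two iterations, i.e.\ that $M_2=\gamma_\infty(\gamma_\infty(G))$ necessarily has an infinite basis normalizer or is pronilpotent. I expect to settle it by analysing the Frobenius decomposition above layer by layer, using \hyperref[cc]{Lemma~\ref{cc}} and \hyperref[l:normaliz-centraliz]{Lemma~\ref{l:normaliz-centraliz}} to control the coprime actions and Zelmanov's theorem to locate the infinite abelian subgroups that feed \hyperref[l:Fh3]{Lemma~\ref{l:Fh3}}; once the termination after two steps is in place, the remaining estimate $h(G)\le 1+1+3=5$ is routine bookkeeping of Fitting heights.
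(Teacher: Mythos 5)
Your reduction skeleton (finite case via Theorem~\ref{t:finite_CA}; infinite basis normalizer via Lemma~\ref{l:Fh3}; otherwise pass to $\gamma_\infty$ and iterate, with the bookkeeping $h(G)\le h\bigl(\gamma_\infty(\gamma_\infty(G))\bigr)+2\le 5$) is exactly the paper's, but your proof is incomplete precisely at what you yourself call ``the main obstacle'': you never prove that the descent leaves the finite-basis-normalizer regime after two steps. The paper settles this not by analysing a Frobenius decomposition of the infinite group, but by a compatibility trick plus the finite classification. It fixes one Sylow basis $\{P_i\}$ of $G$ normalized by $T_1$ and uses the induced bases $\{P_i\cap K_1\}$ and $\{P_i\cap K_2\}$, where $K_1=\gamma_\infty(G)$ and $K_2=\gamma_\infty(K_1)$; this specific choice guarantees that $T_1$ normalizes $T_2$ and $T_2T_1$ normalizes $T_3$. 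Hence, if all three basis normalizers are finite, $T_3T_2T_1$ is a finite soluble subgroup with $G=K_3T_3T_2T_1$ for $K_3=\gamma_\infty(K_2)$; since $h(G/K_3)=3$, this finite subgroup has Fitting height at least $3$, contradicting Theorem~\ref{t:finite_CA}, which bounds the Fitting height of a finite soluble CA-group by $2$ (abelian, or Frobenius with abelian kernel and cyclic complement). Your proposal contains no substitute for this step; note also that with independently chosen basis normalizers of $G$, $M_1$, $M_2$ the products $T_2T_1$ and $T_3T_2T_1$ need not even be subgroups, so the compatible choice of Sylow bases is not a cosmetic detail.

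Moreover, the structural input you do offer is incorrect. The claim that ``the Fitting subgroup of any prosoluble CA-group is abelian'' is false: a non-abelian free pro-$p$ group is a prosoluble CA-group which, being pro-$p$ and hence pronilpotent, equals its own Fitting subgroup. The error in your argument is the assertion $Z(F_p)\neq 1$: an infinite pro-$p$ group can have trivial center (again, free pro-$p$ groups of rank at least $2$), so the finite-group fact you are importing does not survive the inverse limit. Consequently the Frobenius decomposition you intend to ``analyse layer by layer'' is not available (and even where $C_F(g)=1$ holds for $g\notin F$, the splitting of $G$ over $F$ that a Frobenius structure requires is not automatic). The final paragraph of your proposal is therefore a plan, not a proof, and the lemma remains unestablished by your argument.
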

\begin{proof} 
Let $G$ be a prosoluble CA-group.
In view of \hyperref[l:Fh3]{Lemma~\ref{l:Fh3}}
we can assume that $G$ has a finite basis normalizer $T_1$. 
Let $\{P_1, P_2, \dots\}$ be a Sylow basis of $G$ such that $T_1$ normalizes each Sylow subgroup $P_i$. 
Let $K_1=\gamma_\infty(G)$. 
We have $G=K_1T_1$. 
Let $T_2$ be the basis normalizer of
the Sylow basis $\{P_1\cap K_1,P_2\cap K_1, \dots\}$ in $K_1$. 
If $T_2$ is infinite, then, by  \hyperref[l:Fh3]{Lemma~\ref{l:Fh3}}, $h(K_1)\leq3$ and $h(G)\leq4$. 
We therefore can assume that $T_2$ is finite. 
The specific choice of the Sylow basis $\{P_1\cap K_1, P_2\cap K_1, \dots\}$ 
guarantees that $T_1$ normalizes $T_2$. 
Note that $G=K_2T_2T_1$, where $K_2=\gamma_\infty(K_1)$. 
It follows that the subgroup $T_2T_1$ is not nilpotent since $G/K_2$ is not.
Let $T_3$ be the basis normalizer of
the Sylow basis $\{{P_1\cap K_2}, {P_2\cap K_2}, \dots\}$ in $K_2$. 
If $T_3$ is infinite, then $h(K_2)\leq3$ and $h(G)\leq5$. 

We therefore can assume that $T_3$ is finite. 
The specific choice of the Sylow basis $\{{P_1\cap K_2}, {P_2\cap K_2}, \dots\}$ 
guarantees that $T_2T_1$ normalizes $T_3$. 
Note that $G=K_3T_3T_2T_1$, where $K_3=\gamma_\infty(K_2)$. 
It follows that $h(T_3T_2T_1)\geq3$ since $h(G/K_3)=3$.
But $T_3T_2T_1$ is a  finite soluble CA-group and so by \hyperref[t:finite_CA]{Theorem~\ref{t:finite_CA}} it has Fitting height at most 2, a contradition.
Thus the case where $T_3$ is finite  does not occur and the lemma is proved.
\end{proof}

We will require the following lemma.
\begin{lemma}\label{tata} 
Let $N$ be a normal subgroup of a finite group $G$ such that
$G/N$ is soluble. 
The group $G$ has a soluble subgroup $H$ such that $G=NH$.
\end{lemma}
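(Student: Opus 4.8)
The plan is to argue by induction on $|G|$, producing the soluble supplement $H$ by peeling off a minimal normal subgroup of $G$ that sits inside $N$. The base case is immediate: if $N=1$ then $G\cong G/N$ is soluble and one takes $H=G$. So I would assume $N\neq 1$ and fix a minimal normal subgroup $M$ of $G$ with $M\le N$. Such an $M$ is characteristically simple, hence a direct product of isomorphic simple groups, and I would split the argument according to whether these simple factors are abelian or not.

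If $M$ is abelian, then $M$ is soluble and I would pass to $\bar G=G/M$. Here $\bar N=N/M$ is normal with $\bar G/\bar N\cong G/N$ soluble, so by induction (as $|\bar G|<|G|$) there is a soluble $\bar H\le\bar G$ with $\bar G=\bar N\,\bar H$. Pulling $\bar H$ back to its full preimage $H\le G$, one gets $G=NH$ because $M\le N$, while $H$ is soluble as an extension of the soluble group $M$ by the soluble group $H/M\cong\bar H$. This disposes of the abelian case.

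The non-abelian case is the heart of the matter, and the key device is the Frattini argument. Since $M$ is a product of non-abelian simple groups, $|M|$ is divisible by at least two primes, so for any prime $s\mid |M|$ a Sylow $s$-subgroup $S$ of $M$ satisfies $1<S<M$. The Frattini argument gives $G=M\,N_G(S)$, and minimality of $M$ forces $S\not\trianglelefteq G$ (otherwise $S$ would be a proper nontrivial normal subgroup of $G$ contained in $M$), so $L:=N_G(S)$ is a \emph{proper} subgroup of $G$. Because $M\le N$ we then have $G=NL$, whence $L/(L\cap N)\cong G/N$ is soluble. Applying the induction hypothesis to $L$ with its normal subgroup $L\cap N$ yields a soluble $H\le L$ with $L=(L\cap N)H$, and consequently $G=NL=N(L\cap N)H=NH$, as required.

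The step I expect to be the main obstacle is precisely this reduction in the non-abelian case: one must exhibit a \emph{proper} subgroup of $G$ that simultaneously supplements $N$ and retains a soluble quotient by its intersection with $N$, and it is the Frattini argument applied to a (necessarily proper) Sylow subgroup of the non-abelian $M$ that delivers both properties at once. I would emphasize that this route sidesteps any appeal to the classification of finite simple groups or the Schreier conjecture; the only structural fact used about $M$ is the elementary observation that a non-abelian simple group never has prime-power order.
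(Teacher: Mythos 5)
Your proof is correct, but it takes a genuinely different route from the paper's. The paper's argument is a three-line affair: choose a Sylow $2$-subgroup $S$ of $N$ and set $H=N_G(S)$; the Frattini argument gives $G=NH$, and $H\cap N=N_N(S)$ is soluble because it contains $S$ as a normal Sylow $2$-subgroup with quotient of \emph{odd order} --- this is where the Feit--Thompson theorem is invoked --- so $H$ is soluble as an extension of $H\cap N$ by $H/(H\cap N)\cong G/N$. Your induction on $|G|$, which peels off a minimal normal subgroup $M\le N$ and applies the Frattini argument to a Sylow subgroup of $M$ only in the non-abelian case (where minimality of $M$ forces $S\not\trianglelefteq G$, hence $N_G(S)$ proper, so the induction hypothesis applies), is entirely elementary: it avoids the odd order theorem altogether, at the cost of being longer. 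What the paper's proof buys is brevity, and since the paper already relies on Feit--Thompson elsewhere, the dependence is costless in that context; what yours buys is self-containedness, the only structural inputs being the Frattini argument and the fact that a non-abelian simple group cannot have prime-power order. One small correction of emphasis: your closing remark about sidestepping the classification of finite simple groups and the Schreier conjecture is slightly off-target, since the heavy tool that the natural short proof uses --- and that your argument genuinely avoids --- is the Feit--Thompson odd order theorem.
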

\begin{proof} 
Choose a $2$-Sylow subgroup $S$ of $N$ and write $H=N_G(S)$. 
By the Frattini argument $G=NH$ 
and by the theorem of Feit and Thompson $H\cap N$ is soluble. 
It follows that $H$ is soluble, as required.
\end{proof}

Denote by $\mathfrak C$ the class of all finite groups 
whose soluble subgroups are of Fitting height at most 5.
Obviously, $\mathfrak C$ is closed under taking subgroups.

\begin{lemma}\label{l:quotient_closed} 
The class $\mathfrak C$ is closed under taking quotients.
\end{lemma}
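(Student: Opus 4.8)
The plan is to verify directly that every soluble subgroup of a quotient $G/N$, with $G\in\mathfrak C$ and $N$ normal in $G$, has Fitting height at most $5$. Any soluble subgroup of $G/N$ has the form $H/N$ with $N\leq H\leq G$ and $H/N$ soluble. The difficulty is that the full preimage $H$ itself need not be soluble, so the defining property of $\mathfrak C$ cannot be applied to $H$ directly; this is precisely the gap that \hyperref[tata]{Lemma~\ref{tata}} is designed to close.

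First I would fix such an $H$ and apply \hyperref[tata]{Lemma~\ref{tata}} to the pair $(H,N)$: since $N$ is normal in $H$ and $H/N$ is soluble, there is a soluble subgroup $K\leq H$ with $H=NK$. The gain in replacing $H$ by $K$ is that $K$ is a genuine soluble subgroup of $G$, so the hypothesis $G\in\mathfrak C$ yields at once $h(K)\leq 5$.

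Next I would identify the quotient. From $H=NK$ one has $H/N=NK/N\cong K/(K\cap N)$, a homomorphic image of the soluble group $K$. Since the Fitting height does not increase under passage to quotients (images of pronilpotent groups are pronilpotent, so any shortest normal series of $K$ with pronilpotent quotients projects to such a series of $K/(K\cap N)$), we obtain $h(H/N)\leq h(K)\leq 5$.

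As $H/N$ was an arbitrary soluble subgroup of $G/N$, this shows $G/N\in\mathfrak C$, giving the desired closure under quotients. I do not expect any serious obstacle beyond the one already named: the sole subtlety is the possible non-solubility of the full preimage, and \hyperref[tata]{Lemma~\ref{tata}} together with the monotonicity of Fitting height under quotients disposes of it cleanly.
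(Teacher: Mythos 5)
Your proof is correct and follows essentially the same route as the paper: both rest on Lemma~\ref{tata} to replace the (possibly non-soluble) preimage by a soluble subgroup $K$ with the same image, then use $h(K)\leq 5$ and monotonicity of Fitting height under quotients. The only cosmetic difference is that the paper first reduces to the case where the full quotient $G/N$ is soluble (via subgroup-closedness of $\mathfrak C$) and applies Lemma~\ref{tata} to $G$ itself, whereas you apply it directly to the preimage $H$; the two organizations are equivalent.
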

\begin{proof} 
Let $G\in\mathfrak C$. 
It is sufficient to show that if $N$ is a normal subgroup of $G$ such that $G/N$ is soluble, 
then $h(G/N)\leq5$. 
By \hyperref[l:quotient_closed]{Lemma~\ref{tata}} we have $G=NH$ where $H$ is a soluble subgroup and 
so $G/N$ is isomorphic to a quotient of $H$. 
Hence, $h(G/N)\leq5$, as required.
\end{proof}

\begin{lemma} 
Any profinite CA-group is pro-$\mathfrak C$.
\end{lemma}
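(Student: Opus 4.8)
The plan is to establish that $G$ is pro-$\mathfrak{C}$ by verifying the defining membership condition on every finite continuous quotient. Since $\mathfrak{C}$ is closed under subgroups (by the remark following its definition) and under quotients (\hyperref[l:quotient_closed]{Lemma~\ref{l:quotient_closed}}), a profinite group is pro-$\mathfrak{C}$ precisely when each of its finite continuous quotients lies in $\mathfrak{C}$. So I would fix an epimorphism $\alpha\colon G\to Q$ onto a finite group $Q$ and aim to prove $Q\in\mathfrak{C}$, that is, that every soluble subgroup of $Q$ has Fitting height at most $5$.

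The key move is to reduce the problem to \hyperref[fitting]{Lemma~\ref{fitting}} by means of \hyperref[l:main]{Lemma~\ref{l:main}}. Applying the latter to $\alpha$ produces a subgroup $H\le G$ for which the restriction $\beta=\alpha_{|H}\colon H\to Q$ is still an epimorphism and whose kernel $K=\Ker(\beta)$ is pro-$p$ or abelian; in either case $K$ is prosoluble. I would then pass to an arbitrary soluble subgroup $S\le Q$ and consider its full preimage $L=\beta^{-1}(S)$, a closed subgroup of $G$. Two observations make $L$ tractable: first, being a subgroup of the CA-group $G$, the group $L$ is itself a CA-group (a centralizer in $L$ is the intersection of $L$ with a centralizer in $G$, hence abelian); second, $L$ is an extension of the prosoluble group $K$ by the finite soluble group $S$, so $L$ is prosoluble.

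With these two points in hand the conclusion is immediate: \hyperref[fitting]{Lemma~\ref{fitting}} gives $h(L)\le 5$, and since $S\cong L/K$ is a quotient of the prosoluble group $L$, its Fitting height cannot exceed that of $L$, whence $h(S)\le 5$. As $S$ was an arbitrary soluble subgroup of $Q$, this yields $Q\in\mathfrak{C}$, and letting $Q$ range over all finite continuous quotients of $G$ finishes the argument.

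The only genuinely delicate point, and the step I would take most care over, is the clean splitting supplied by \hyperref[l:main]{Lemma~\ref{l:main}}: everything hinges on realizing $Q$ as a quotient of a subgroup $H$ of $G$ with a \emph{prosoluble} (indeed pro-$p$ or abelian) kernel, for this is exactly what converts each soluble subgroup $S$ of $Q$ into a prosoluble CA-group $L$ to which the Fitting-height bound of \hyperref[fitting]{Lemma~\ref{fitting}} applies. The remaining facts I use—that subgroups of CA-groups are again CA, that an extension of a prosoluble group by a soluble group is prosoluble, and that Fitting height does not grow under quotients—are routine, and I would only record them briefly.
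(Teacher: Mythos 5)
Your proposal is correct and takes essentially the same route as the paper: reduce to showing that every soluble subgroup of a finite quotient of $G$ has Fitting height at most $5$, use Lemma~\ref{l:main} to realize such a subgroup as the image of a prosoluble CA-subgroup of $G$ (your $L=\beta^{-1}(S)$), and apply Lemma~\ref{fitting}. The paper's proof is just a compressed version of yours, leaving implicit the details you spell out (prosolubility of the preimage, CA-inheritance by subgroups, and monotonicity of Fitting height under quotients).
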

\begin{proof} 
Let $G$ be a profinite CA-group. 
It is sufficient to show that 
if $K$ is a soluble subgroup of a finite homomorphic image of $G$,
then $h(K)\leq5$. 
By \hyperref[l:main]{Lemma~\ref{l:main}} (or by a profinite version of  \hyperref[l:quotient_closed]{Lemma~\ref{tata}}) there exists a prosoluble subgroup $H$ in $G$ that maps onto $K$. 
 \hyperref[fitting]{Lemma~\ref{fitting}} states that $h(H)\leq 5$ whence $h(K)\leq5$.
\end{proof}

Recall that the nonsoluble length $\lambda(G)$ of a finite group $G$ is defined 
as the minimum number of nonsoluble factors in a normal series each of whose factors is
either soluble or a nonempty direct product of non-abelian simple groups. Khukhro and the first author \cite[Cor.~1.2]{KhSh:15} proved that 
the nonsoluble length of a finite group $G$ does not exceed the maximum Fitting height of soluble subgroups of $G$. It follows that 
for any group $G$ in $\mathfrak C$ we have $\lambda(G)\leq5$. 
Combining this with \hyperref[l:quotient_closed]{Lemma~\ref{l:quotient_closed}} we conclude that each group $G$ in $\mathfrak C$ 
has a characteristic series of length at most 35 each of whose factors is either nilpotent 
or a direct product of non-abelian simple groups. 
More precisely, each group $G$ in $\mathfrak C$ has a characteristic series
\[1=G_0\leq G_1\leq\dots\leq G_{35}=G\]
such that 
the factors $G_6/G_5$, $G_{12}/G_{11}$, $G_{18}/G_{17}$, $G_{24}/G_{23}$, $G_{30}/G_{29}$ 
are direct product of non-abelian simple groups while the other factors are nilpotent. 
Important results of Wilson \cite[Lemma~2 and Lemma~3]{Wilson:83} now guarantee that any {pro-$\mathfrak C$} group has a characteristic series of length at most 35 each of whose factors is either pronilpotent or a Cartesian product of non-abelian simple groups. 
Thus, we have proved the following result.

\begin{lemma}\label{series} 
Any profinite CA-group has a characteristic series of length at most 35 
each of whose factors is either pronilpotent 
or a Cartesian product of non-abelian finite simple groups.
\hfill \qedsymbol
\end{lemma}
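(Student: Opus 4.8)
The plan is to trace through the chain of reductions that the excerpt has just set up and assemble them in the correct order. The statement to be proved is \hyperref[series]{Lemma~\ref{series}}: every profinite CA-group $G$ has a characteristic series of length at most $35$ whose factors are alternately pronilpotent or Cartesian products of non-abelian finite simple groups. The essential point is that the paragraph immediately preceding the lemma already contains the entire argument; my task is to organize it cleanly. I would begin by recalling that $G$ is pro-$\mathfrak{C}$ (the lemma proved just above), so it suffices to understand the finite groups in $\mathfrak{C}$ and then pass to the profinite limit via Wilson's results.

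\emph{First} I would control the structure of an arbitrary $G\in\mathfrak C$. Since $\mathfrak C$ consists of finite groups whose soluble subgroups have Fitting height at most $5$, the result of Khukhro and the first author \cite[Cor.~1.2]{KhSh:15} gives $\lambda(G)\leq 5$, i.e.\ $G$ admits a normal series with at most $5$ nonsoluble factors, each factor being soluble or a direct product of non-abelian simple groups. \emph{Next}, using that $\mathfrak C$ is quotient-closed (\hyperref[l:quotient_closed]{Lemma~\ref{l:quotient_closed}}), every soluble section appearing here again has Fitting height at most $5$, so each soluble factor can be refined into at most $5$ nilpotent factors. Combining, each of the $5$ ``nonsoluble blocks'' contributes one simple-product factor sandwiched between at most $5$ nilpotent layers, yielding a characteristic series of length at most $6\cdot 5+5=35$ whose simple-product factors occupy the prescribed positions $G_6/G_5,\,G_{12}/G_{11},\,G_{18}/G_{17},\,G_{24}/G_{23},\,G_{30}/G_{29}$ and whose remaining factors are nilpotent. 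Care is needed to arrange that this series is genuinely characteristic rather than merely normal, which is why one works with canonical objects (Fitting-type and socle-type layers) at each stage.

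\emph{Finally}, I would lift this uniform bound on finite quotients to $G$ itself. Because $G$ is pro-$\mathfrak C$, it is an inverse limit of finite groups each carrying such a length-$35$ characteristic series with the fixed pattern of factor types. The two lemmas of Wilson \cite[Lemma~2 and Lemma~3]{Wilson:83} are precisely designed to transfer such uniformly bounded characteristic series through inverse limits: they guarantee that $G$ inherits a characteristic series of length at most $35$ whose factors are pronilpotent or Cartesian products of non-abelian finite simple groups, in the same positions. This completes the proof.

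I expect the main obstacle to be the bookkeeping that makes the series \emph{characteristic} and of \emph{uniform} length across all finite quotients simultaneously. The numerical bound $5$ on nonsoluble length and on Fitting height of soluble sections is the crux; everything downstream is combinatorial arrangement plus an appeal to Wilson's inverse-limit machinery. The delicate step is ensuring that the canonical refinements chosen in each finite quotient are compatible under the bonding maps of the inverse system, so that Wilson's lemmas apply with the fixed factor-type pattern; once that compatibility is in place, the conclusion follows directly.
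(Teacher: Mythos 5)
Your proposal is correct and follows essentially the same route as the paper: it likewise combines the pro-$\mathfrak C$ lemma, the Khukhro--Shumyatsky bound $\lambda(G)\le 5$, quotient-closedness of $\mathfrak C$ to refine each soluble factor into at most five nilpotent layers (giving the $6\cdot 5+5=35$ count with simple-product factors at positions $6,12,18,24,30$), and Wilson's Lemmas~2 and~3 to pass from the finite quotients to the profinite group. Your closing concern about compatibility of the series across the inverse system is precisely what Wilson's results are invoked to handle, so no additional argument is required there.
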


We remark that our final results show that the characteristic series in the above lemma 
can be chosen of length at most 3 with the first term being an open subgroup.

In what follows $\pi(G)$ denotes the set of prime divisors of the order of a profinite group $G$.

\begin{lemma}\label{l:metapronilp} 
Let $G$ be a profinite CA-group having a normal subgroup $N$ such that
both $N$ and $G/N$ are pronilpotent. Then $G$ is virtually pronilpotent.
\end{lemma}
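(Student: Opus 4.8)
The plan is to separate the analysis according to the primes occurring in $N$, reducing everything to two tools already available: the centralizer dichotomy coming from the CA-condition, and Lemma~\ref{l:normaliz-centraliz} fed with infinite abelian subgroups produced by Zelmanov's theorem. If $N=1$ there is nothing to do, so assume $N\neq1$ and write $N=\prod_p N_p$ for its pairwise commuting Sylow subgroups. Note that $G$ is prosoluble, being pronilpotent-by-pronilpotent.

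First I would dispose of the case in which $N$ fails to be abelian. If $\pi(N)$ contains two distinct primes $p\neq q$, then for any $1\neq x\in N_q$ the centralizer $C_G(x)$ is abelian and contains $N_p$ (since $[N_p,N_q]=1$), so every $N_p$ is abelian and hence so is $N$. Thus a non-abelian $N$ must be pro-$p$ for a single prime $p$. In that situation I take a Hall pro-$p'$ subgroup $H$ of $G$; were $H$ infinite, Zelmanov's theorem would give an infinite abelian pro-$p'$ subgroup $A\le H$ normalizing $N$, and Lemma~\ref{l:normaliz-centraliz} would force $NA$, hence $N$, to be abelian, a contradiction. Therefore $H$ is finite, the Sylow pro-$p$ subgroup of $G$ is open, and $G$ is virtually pro-$p$.

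It remains to treat the case where $N$ is abelian. Here I would replace $N$ by $C_G(N)$, which is an abelian, normal, self-centralizing subgroup (the centralizer of a nontrivial element is abelian) with $G/C_G(N)$ still pronilpotent. If $G=N$ then $G$ is abelian; otherwise the CA-condition yields the Frobenius-type dichotomy $C_N(g)=1$ for every $g\notin N$, since a nontrivial fixed point $n$ would place both $g$ and $N$ inside the abelian group $C_G(n)$, forcing $g\in C_G(N)=N$. A short fixed-point argument in $p$-groups then shows $\pi(N)\cap\pi(G/N)=\emptyset$, so by the profinite Schur--Zassenhaus theorem $G=N\rtimes C$ with $C\cong G/N$ pronilpotent and acting fixed-point-freely on $N$.

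The decisive step is to show that $C$ is finite. If $C$ were infinite, Zelmanov's theorem would supply an infinite abelian subgroup $A\le C$; choosing $p\in\pi(N)$, the group $A$ is an infinite abelian pro-$p'$ subgroup (by coprimality) normalizing the pro-$p$ group $N_p$, so Lemma~\ref{l:normaliz-centraliz} gives $[N_p,A]=1$, contradicting the fixed-point-free action. Hence $C$ is finite, $N$ is open, and $G$ is virtually abelian, in particular virtually pronilpotent. I expect the main obstacle to be the bookkeeping around the non-abelian pro-$p$ possibility: a nontrivial pronilpotent normal subgroup need not have nontrivial centre, so one cannot simply declare $N$ abelian, and it is precisely the combination of Zelmanov's theorem with Lemma~\ref{l:normaliz-centraliz} that rescues both the single-prime case and the finiteness of the complement $C$.
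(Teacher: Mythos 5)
Your first case ($N$ non-abelian) is correct and is essentially the paper's own argument in different packaging: reduce to $N$ pro-$p$ via the CA-condition, produce an infinite abelian pro-$p'$ subgroup by Zelmanov's theorem, and let Lemma~\ref{l:normaliz-centraliz} force $N$ abelian; whether one runs this through a Hall pro-$p'$ subgroup (as you do) or through the quotient $G/N$ (as the paper does) is immaterial.

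The abelian case, however, has a fatal gap: the claim $\pi(N)\cap\pi(G/N)=\emptyset$ is false, and everything after it collapses. The ``short fixed-point argument in $p$-groups'' is the fact that a nontrivial normal subgroup of a finite $p$-group meets the centre; this does not pass to inverse limits, because infinite pro-$p$ groups do admit fixed-point-free automorphisms of $p$-power order --- inversion on $\mathbb{Z}_2$, or multiplication by any $u\in 1+4\mathbb{Z}_2$ with $u\neq 1$ (in the finite quotients $\mathbb{Z}/2^n\mathbb{Z}$ the fixed points are nontrivial, but they converge to $0$ in the limit). Concretely, take $G=\mathbb{Z}_2\rtimes\mathbb{Z}_2^{\times}$, the affine group of $\mathbb{Z}_2$. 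One checks directly that $G$ is CA: two elements $(y,v)$ and $(x,u)$ commute if and only if $(1-u)y=(1-v)x$, and since $\mathbb{Z}_2$ is a domain each centralizer is abelian. Here $N=\mathbb{Z}_2\times\{1\}$ is abelian, normal and self-centralizing, $G/N\cong\mathbb{Z}_2^{\times}\cong C_2\times\mathbb{Z}_2$ is pronilpotent, and every element outside $N$ acts fixed-point-freely on $N$ --- exactly your situation after replacing $N$ by $C_G(N)$. Yet $\pi(N)\cap\pi(G/N)=\{2\}$, there is no coprime complement, the complement that does exist is infinite, $N$ is not open, and $G$ is not virtually abelian (any open subgroup contains some $(x,1)\neq 1$ and some $(y,v)$ with $v\neq 1$, and these do not commute). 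The lemma's conclusion survives only because this $G$ happens to be pro-$2$.

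This example shows the true content of the abelian case is a dichotomy --- virtually abelian \emph{or} virtually pro-$p$ --- so no argument can prove ``virtually abelian'' outright, and coprimality cannot be manufactured from fixed-point-freeness. The paper's proof is built around exactly this point: it assumes, toward a contradiction, that $G$ is not virtually pro-$p$ for any $p$; writing $K=\gamma_\infty(G)\leq N$ and $T$ for a basis normalizer, with $G=KT$ and $K\cap T=1$, that assumption is what licenses choosing a prime $p\in\pi(K)$ for which $T$ contains an \emph{infinite pro-$p'$} subgroup, and only then does Lemma~\ref{l:normaliz-centraliz} apply (followed by a finer analysis of $T/C_T(P)$ using Lemma~\ref{cc}(iv)). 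That case split --- ``$G$ not virtually pro-$p$'' --- is precisely what your argument needs and lacks.
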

\begin{proof} 
Suppose first that $N$ is abelian 
and arguing by contradiction assume that $G$ is not virtually nilpotent. 
Let $K=\gamma_\infty(G)$ and let $T$ be a basis normalizer in $G$. 
So $K$ is abelian and thus, by a profinite version of the corresponding result on finite groups 
(see \cite[Thm.~9.2.7]{Robinson:96}), we have $G=KT$ and $K\cap T=1$. 
Without loss of generality we assume that $T$ is infinite. 
Further, assume that $G$ is not virtually pro-$p$ 
and so we choose $p\in\pi(K)$ such that $T$ contains an infinite pro-$p'$ subgroup. 
Since $T$ is pronilpotent, we remark that either $T$ is abelian or pro-$q$ for a prime $q\neq p$.
By \hyperref[l:normaliz-centraliz]{Lemma~\ref{l:normaliz-centraliz}},
any infinite abelian pro-$p'$ subgroup $A$ of $T$ centralizes a $p$-Sylow subgroup $P$ of $K$.
If $T$ is abelian, we deduce that $A\subseteq Z(G)$; whence the whole group $G$ is abelian.
We therefore assume that $T$ is not abelian and so $T$ is a pro-$q$ group.
Since infinite abelian pro-$q$ subgroups of $T$ centralize $P$, 
it follows that all infinite procyclic subgroups of $T$ centralize $P$ and so $T/C_T(P)$ is torsion. 
In view of Zelmanov's theorem we conclude that $T/C_T(P)$ is locally finite.
Let $C=C_G(P)$. Of course, $C$ is abelian. 
If $T/C_T(P)$ is cyclic or generalized quaternion, then $C$ is open in $G$; 
thus $G$ is virtually abelian. 
So suppose that $T/C_T(P)$ is neither cyclic nor generalized quaternion.
Then, by \cite[Thm.~5.4.10(ii)]{Gorenstein:80}, $T/C_T(P)$ contains a noncyclic subgroup $B$ of order $q^2$.
In view of \hyperref[cc]{Lemma~\ref{cc}}(iv), we have $P=\prod_{b\in B^\#} C_P(b)$.
Thus, for at least one nontrivial element $b$ in $B$ we obtain $C_P(b)\neq1$, 
and so $b\in C$; a contradiction. 
This proves the lemma in the case where $N$ is abelian.

Assume now that $N$ is non-abelian and so $N$ is a pro-$p$ group for a prime $p$.
If $G/N$ is virtually pro-$p$, there is nothing to prove. 
Therefore we assume that the pronilpotent group $G/N$ contains an infinite pro-$p'$ subgroup. 
It follows that $G$ contains an infinite abelian pro-$p'$ subgroup which, 
by \hyperref[l:normaliz-centraliz]{Lemma~\ref{l:normaliz-centraliz}}, centralizes $N$.
Since $C_G(N)\neq1$, it follows that $N$ is abelian.
This case was treated in the previous paragraph.
\end{proof}

We are ready to prove the result that entitles this section.

\begin{customthm}{\ref{t:virt_pronilp_intro}}
A profinite CA-group is virtually abelian or virtually pro-$p$. 
\end{customthm}

\begin{proof} 
We will actually prove the equivalent statement that a profinite CA-group $G$ is virtually pronilpotent. 
By \hyperref[series]{Lemma~\ref{series}}, $G$ has a normal series of length at most~35
\[1= G_0\leq\dots\leq G_l=G\] 
such that each factor $G_{i+1}/G_i$ is either pronilpotent 
or a Cartesian product of non-abelian finite simple groups.
Let $l$ be the minimum of lengths of such series. 
If $l=1$, then $G$ is either pronilpotent or finite.
We therefore assume that $l\geq2$ and use induction on $l$.
By induction, $G_{l-1}$ is virtually pronilpotent.
{Set $R=G_{l-1}$ and 
let $N$ be the maximal open normal pronilpotent subgroup in $R$.}
Set $H={\{x\in G\ \vert\ [R,x]\leq N\}}$.
Since $R/N$ is finite, it is clear that $H$ is an open subgroup in $G$.
Note that the image of $R\cap H$ in $H/N$ is central.
It is sufficient to show that $H$ is virtually pronilpotent.
Thus, without loss of generality we can assume that $R/N$ is central in $G/N$.

Suppose first that $G/R$ is pronilpotent.
Since $R/N$ is central in $G/N$, it follows that $G/N$ is pronilpotent 
and the theorem is immediate from \hyperref[l:metapronilp]{Lemma~\ref{l:metapronilp}}.

Therefore it remains to deal with the case 
where $G/R$ is a Cartesian product of finite non-abelian simple groups.
In that case $G$ is a product $\prod_{i\in I} S_i$ of normal subgroups $S_i$
such that $S_i/R$ is a simple direct factor of $G/R$.

Assume that the set of indices $I$ is infinite. Let $P$ be a $p$-Sylow subgroup of the pronilpotent group $N$.
For each $i\in I$ choose a $p'$-element $a_i\in S_i - R$.
Note that the image in $G/N$ of the subgroup $A=\langle a_i\ \vert\ i\in I\rangle$
is infinite and nilpotent of class at most two.
By \hyperref[l:metapronilp]{Lemma~\ref{l:metapronilp}}, $NA$ is virtually pronilpotent.
Let $M$ be the maximal normal open pronilpotent subgroup of $NA$.
Clearly, $N\subseteq M$. 
Since $M$ is open, we can choose two distinct indexes $i$ and $j$ in $I$ such that $a_ia_j^{-1}\in M$.
Moreover, $a_ia_j^{-1}$ is a $p'$-element modulo $N$.
Therefore the $p'$-Hall subgroup of the procyclic subgroup $\langle a_ia_j^{-1}\rangle$ is nontrivial.
Choose a generator $b$ of the $p'$-Hall subgroup of $\langle a_ia_j^{-1}\rangle$.
We see that $b\in S_iS_j - R$. Further, taking into account that $b$ is a $p'$-element in $M$ we deduce that $b$ centralizes $P$.
Since $P$ is normal in $G$, the subgroup $[G,b]$ centralizes $P$ and we conclude that $[G,b]$ is abelian.
Note that any normal subgroup of a Cartesian product of non-abelian finite simple group is a product of some of the direct factors.
Observe also that in our case the subgroup $[G,b]$ is normal in $G$ and therefore the subgroup $R[G,b]$ contains both $S_i$ and $S_j$.
This is a contradiction since the subgroups $S_i$ and $S_j$ are not abelian modulo $R$.
Thus, the set $I$ cannot be infinite.
The proof is complete.
\end{proof}

\section{Virtually abelian CA-groups}
\label{s:virt_abel}

It is clear that
a Frobenius profinite group is a CA-group if, and only if,
its Frobenius kernel is abelian and its Frobenius complement is cyclic.
So, for instance, an 
infinite profinite CA-group which is virtually abelian
can have a cyclic quotient modulo a normal abelian subgroup.

In the present section we show that 
infinite profinite CA-group which are virtually abelian
can be categorized according to their quotients modulo maximal abelian normal subgroups.
Besides the cyclic groups, there are only two more possibilities for the quotients: 
the generalized quaternion groups $\mathbf{Q}_{2^{n}}$ of order $2^{n}$, 
or the special linear group $\textup{SL}(2,3)$. Recall that the group $\textup{SL}(2,3)$ has order 24 and is isomorphic to a semidirect product of the quaternion group $\mathbf{Q}_{8}$ by the group of order 3. We conclude the section constructing an example of a CA-group with the quotient isomorphic to $\textup{SL}(2,3)$.

Now, 
let $G$ be an infinite profinite CA-group which is virtually abelian and 
let $N$ be the (unique) maximal open normal abelian subgroup in $G$. 
\emph{Let $\bar{G}=G/N$.}
The group $\bar{G}$ acts faithfully on $N$ by automorphisms and $C_N(\phi)=1$ for each $\phi$ in $\bar{G}$. It follows that every element in $G - N$ has finite order. We claim that the Sylow subgroups of $\bar{G}$ are either cyclic or generalized quaternion.
To see this, suppose that $\bar{G}$ contains a noncyclic subgroup $\bar{A}$ of order $p^2$ for some prime $p$.
\hyperref[rankexpo]{Lemma~\ref{rankexpo}} implies that $N$ is of exponent $p$, that is $N^p=1$.
Choose a nontrivial element $x$ in $N$ and consider the action of $\bar{A}$ on the finite $p$-group $\langle x^{\bar{A}}\rangle$.
Since the semidirect product of $\langle x^{\bar{A}}\rangle$ by $\bar{A}$ is a finite $p$-group,
we easily see that $\bar{A}$ has nontrivial centralizer in $\langle x^{\bar{A}}\rangle$.
This is a contradiction since $C_N(\phi)=1$ for each $\phi$ in $\bar{G}$.
Hence, $\bar{G}$ has no noncyclic subgroups of order $p^2$ and so indeed, by \cite[Thm.~5.4.10(ii)]{Gorenstein:80}, the Sylow subgroups of $\bar{G}$ are either cyclic or generalized quaternion.

\begin{lemma} \label{l:fitt}
With the above assumptions suppose that $\bar{G}$ is nilpotent.
Then $\bar{G}$ is either cyclic or generalized quaternion.
\end{lemma}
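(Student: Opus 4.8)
The plan is to first reduce the statement to a single critical configuration and then to derive a contradiction from the CA hypothesis. Since $\bar{G}$ is nilpotent it is the direct product of its Sylow subgroups, and we already know each of these is cyclic or generalized quaternion. The odd-order Sylow subgroups are therefore cyclic, so their product $O$ (the odd part of $\bar{G}$) is cyclic. If the Sylow $2$-subgroup is cyclic as well, then $\bar{G}=(\text{cyclic }2\text{-group})\times O$ is cyclic and we are done. Hence I would assume the Sylow $2$-subgroup $Q$ is generalized quaternion, write $\bar{G}=Q\times O$, and reduce the whole lemma to proving $O=1$. Assuming $O\neq 1$, I fix an element $c\in O$ of odd prime order $q$; note that $c$ is central in $\bar{G}$.

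The heart of the argument is to produce an element of order $q$ in $G$ that normalizes a Sylow pro-$2$ subgroup and then exploit coprimality. Let $G_2$ be the preimage of $Q$ in $G$; since $Q$ is a direct factor of $\bar{G}$ it is normal, so $G_2\trianglelefteq G$ with $G_2/N\cong Q$. Choose a Sylow pro-$2$ subgroup $S$ of $G_2$; comparing orders shows $S$ is also Sylow pro-$2$ in $G$ and that $S/(S\cap N)\cong Q$. Applying the Frattini argument to $N\trianglelefteq G_2$ shows $N_{G_2}(S)$ maps onto $Q$, and applying it to $G_2\trianglelefteq G$ shows $N_G(S)$ maps onto $G/G_2=O$; together these force $N_G(S)$ to surject onto all of $\bar{G}$. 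Consequently I can select $\hat{c}\in N_G(S)$ of order $q$ whose image in $\bar{G}$ is a nontrivial (hence central) element of $O$. Now $\hat{c}$ acts on the pro-$2$ group $S$, and this action is coprime because $q$ is odd. Taking $N_2=S\cap N$, which is an $\hat{c}$-invariant normal subgroup of $S$ with $S/N_2\cong Q$ on which $\hat{c}$ acts trivially (as $c$ is central in $\bar{G}$), \hyperref[cc]{Lemma~\ref{cc}}\,(iii) yields $C_S(\hat{c})N_2/N_2=C_{S/N_2}(\hat{c})=S/N_2\cong Q$. Thus $C_S(\hat{c})$ surjects onto $Q$. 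But $C_S(\hat{c})\le C_G(\hat{c})$ is abelian because $G$ is a CA-group, whereas $Q$ is non-abelian; this contradiction gives $O=1$, so $\bar{G}=Q$ is generalized quaternion.

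I expect the main obstacle to be exactly the coprimality bookkeeping that the Frattini argument is designed to bypass. The naive approach—reducing to a finite quotient $G/M$ and invoking \hyperref[t:finite_CA]{Theorem~\ref{t:finite_CA}}—fails, since quotients of CA-groups need not be CA, so one cannot conclude that $G/M$ is Frobenius-with-cyclic-complement. Likewise one cannot simply invoke the coprime invariant-Sylow theorem for the action of $\langle\hat{c}\rangle$ on $G_2$, because $q$ may divide $|N|$ (an order-$q$ automorphism can act without fixed points on a nontrivial abelian pro-$q$ group, e.g. multiplication by $\zeta_q$ on $\mathbb{Z}_q[\zeta_q]$), so $\langle\hat{c}\rangle$ need not be coprime to $G_2$ and need not normalize any Sylow pro-$2$ subgroup. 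The decisive point is that the normality of $G_2$ in $G$ lets the Frattini argument manufacture an order-$q$ element $\hat{c}$ normalizing $S$ \emph{without} any coprimality between $q$ and $|N|$; once $\hat{c}$ normalizes the pro-$2$ group $S$, coprimality of $q$ with $|S|$ is automatic and \hyperref[cc]{Lemma~\ref{cc}} applies. The essential use of the CA property is the final step, where the centralizer $C_S(\hat{c})$ is forced to be abelian yet surject onto the non-abelian group $Q$.
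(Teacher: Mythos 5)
Your proposal is correct and takes essentially the same route as the paper's own proof: the paper likewise forms the normal subgroup $K=NS$ (your $G_2$), applies the Frattini argument to get $G=KN_G(S)$, extracts an odd-order element of $N_G(S)$ lying outside $N$ (using that elements of $G - N$ have finite order), and then combines the coprime-action Lemma~\ref{cc}(iii) with the CA property to force a non-abelian centralizer, a contradiction. The only nitpick is your claim that $\hat{c}$ can be chosen of order exactly $q$ — a lift of $c$ need not have order $q$ (one only gets an element of $q$-power order with nontrivial image by taking a suitable power of a finite-order preimage) — but since the argument uses nothing beyond the oddness of the order of $\hat{c}$, this is immaterial.
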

\begin{proof} 
The result is obviously correct if $2\not\in\pi(\bar{G})$;
so we assume that $2\in\pi(\bar{G})$.
Let $S$ be a $2$-Sylow subgroup of $G$ and $\bar{S}$ the image of $S$ in $\bar{G}$.
We only need to show that if $\bar{S}$ is generalized quaternion, then $\bar{G}=\bar{S}$.
Suppose that this is false and set $K=NS$. 
By the Frattini argument, $G=KN_G(S)$. 
Since $\bar{G}\neq\bar{S}$, we can choose an odd order element $a$ in $N_G(S)$ such that $a\not\in N$. 
Since $C_{\bar{S}}(a)$ is non-abelian (of course, $C_{\bar{S}}(a)$ is precisely $\bar{S}$), 
it follows from \hyperref[cc]{Lemma~\ref{cc}}(iii) that $C_S(a)$ is non-abelian, too.
This is a contradiction.
\end{proof}

\begin{lemma}\label{l:pq}
With the above assumptions suppose
that $\bar{G}$ contains a metacyclic subgroup $\bar{H}$ whose order is divisible by two different primes.
Then $\bar{H}$ is cyclic.
\end{lemma}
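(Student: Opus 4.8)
The plan is to reduce everything to a nilpotency statement: I will show that $\bar{H}$ is nilpotent, and then finish by invoking the argument in the proof of Lemma~\ref{l:fitt} applied to the CA-group $H$ that is the preimage of $\bar{H}$ in $G$. That argument forces a nilpotent section to be cyclic or generalized quaternion, and since $|\bar{H}|$ is divisible by two distinct primes the generalized quaternion alternative is excluded, leaving $\bar{H}$ cyclic. So the whole problem reduces to proving that $\bar{H}$ is nilpotent.

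For the setup I would use two standard facts about metacyclic groups: they are supersolvable, and their subgroups are again metacyclic. Let $p$ be the largest prime dividing $|\bar{H}|$; as $|\bar{H}|$ has at least two prime divisors, $p$ is odd, so the Sylow $p$-subgroup $\bar{P}$ of $\bar{H}$ is cyclic (it embeds in a Sylow $p$-subgroup of $\bar{G}$, which is cyclic since $p$ is odd), and by supersolvability $\bar{P}$ is normal in $\bar{H}$. Choose a Hall $p'$-subgroup $\bar{K}$, so that $\bar{H}=\bar{P}\bar{K}$. I would then induct on the number of primes dividing $|\bar{H}|$: the key claim is that $\bar{K}$ centralizes $\bar{P}$. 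Granting this, $\bar{H}=\bar{P}\times\bar{K}$ with $\bar{K}$ metacyclic on fewer primes, so $\bar{K}$ is nilpotent (cyclic by induction when its order involves two primes, and cyclic or generalized quaternion when its order is a prime power), whence $\bar{H}$ is nilpotent.

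The heart of the matter, and the step I expect to be the main obstacle, is the claim that $\bar{K}$ centralizes $\bar{P}$. It suffices to treat an element $\bar{b}\in\bar{K}$ of prime order $\ell\neq p$. Since $\bar{b}$ acts coprimely on the cyclic $p$-group $\bar{P}$, Lemma~\ref{ccc}(ii) leaves only two possibilities: either $[\bar{P},\bar{b}]=1$, which is what we want, or $C_{\bar{P}}(\bar{b})=1$. To rule the second out, note that in that case $\bar{P}\rtimes\langle\bar{b}\rangle$ is a Frobenius group with kernel $\bar{P}$ and complement $\langle\bar{b}\rangle$ acting by automorphisms on $N$. Because every nontrivial element of $\bar{G}$ acts without fixed points on $N$, we have $C_N(\bar{P})=1$, and this also forces $(|N|,p)=1$, since a nontrivial $p$-group cannot act fixed-point-freely on a nontrivial $p$-group. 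Thus the hypotheses of Lemma~\ref{fro} are satisfied, giving $N=\langle C_N(\bar{b})^{f}\ \vert\ f\in\bar{P}\rangle$; but $C_N(\bar{b})=1$, so $N=1$, contradicting that $N$ is infinite. Hence $C_{\bar{P}}(\bar{b})=1$ is impossible and $\bar{b}$ centralizes $\bar{P}$.

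Ranging over all prime-order elements of $\bar{K}$ then shows that $\bar{K}$ centralizes $\bar{P}$, which completes the induction and establishes that $\bar{H}$ is nilpotent; the conclusion follows as indicated in the first paragraph. The two points that need care are the legitimate use of the fixed-point-free data $C_N(\phi)=1$ precisely for the relevant $p$- and $\ell$-elements, and the applicability of the Lemma~\ref{l:fitt} argument to the preimage $H$ of $\bar{H}$; both are inherited from the standing hypotheses that $\bar{G}$ acts on $N$ with cyclic or generalized quaternion Sylow subgroups.
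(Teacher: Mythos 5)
Your overall plan runs parallel to the paper's (the dichotomy from Lemma~\ref{ccc}(ii) between ``acts trivially'' and ``acts fixed-point-freely'', and killing the Frobenius case with Lemma~\ref{fro}), but there are two genuine gaps, one of which you yourself flagged as the heart of the matter. The claim that the Frobenius case forces $(|N|,p)=1$ is not justified: the statement ``a nontrivial $p$-group cannot act fixed-point-freely on a nontrivial $p$-group'' is a \emph{finite}-group fact and fails for profinite groups. Inversion on $\mathbb{Z}_2$ is a fixed-point-free action of $C_2$ on a pro-$2$ group, and for odd $p$ the companion matrix of $1+x+\dots+x^{p-1}$ gives a fixed-point-free action of $C_p$ on $\mathbb{Z}_p^{\,p-1}$. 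Worse, this is not a removable technicality in the present setting: $\mathbb{Z}_3^2\rtimes C_3$ (with that action) is itself a profinite CA-group whose maximal open normal abelian subgroup is $\mathbb{Z}_3^2$, so under the standing hypotheses $p$ can genuinely divide $|N|$, and Lemma~\ref{fro} cannot be applied with $F=\bar{P}$ acting on all of $N$. The paper circumvents exactly this: it applies Lemma~\ref{fro} only to the $\ell$-Sylow part $Q\cap N$ of $N$ (where coprimality to $p$ is automatic), concluding only that $\ell\notin\pi(N)$, and then needs a second, independent step --- Theorem~\ref{higm} (Thompson--Higman) applied to the preimage $NP$, together with maximality of $N$ and the fact that a nilpotent CA-group is abelian --- to reach the contradiction. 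Your proof has no substitute for that second step.

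The second gap is the reduction ``it suffices to treat an element $\bar{b}\in\bar{K}$ of prime order.'' Knowing that every prime-order element of $\bar{K}$ centralizes $\bar{P}$ does not imply that $\bar{K}$ centralizes $\bar{P}$: a generator of a cyclic group of order $4$ can act on $\bar{P}$ through its quotient of order $2$, in which case its square (the prime-order element) acts trivially while the generator does not, and your Frobenius argument never sees this configuration because the action of $\langle\bar{b}\rangle$ is then not faithful, so $\bar{P}\rtimes\langle\bar{b}\rangle$ is not a Frobenius group. Ruling this out is precisely the second half of the paper's proof (the case $[\bar{P},\bar{a}]=1$ with $\bar{a}$ of prime order inside the cyclic Sylow $\bar{Q}$): there one uses the Frattini argument to replace the generator $b$ of $Q$ by one normalizing $P$, coprime action (Lemma~\ref{cc}(iii)) to get $C_P(a)\not\subseteq N$, and then the CA property --- $C_G(a)$ is abelian and contains $b$ --- to upgrade ``the prime-order element centralizes $\bar{P}$'' to ``all of $\bar{Q}$ centralizes $\bar{P}$.'' Your framing (largest prime, supersolvability, induction on the number of primes, and the appeal to the argument of Lemma~\ref{l:fitt} for the final nilpotent case, whose hypotheses are indeed inherited by the preimage of $\bar{H}$) is sound, but both halves of the key claim need repair along the paper's lines.
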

\begin{proof} 
Without loss of generality we can assume that $\bar{G}=\bar{H}$.
Choose a counterexample with $|\bar{G}|$ as small as possible.
Then $\bar{G}$ is a non-abelian semidirect product of its normal $p$-Sylow subgroup $\bar{P}$ by a $q$-Sylow subgroup $\bar{Q}$.
Moreover, since our counterexample is minimal, 
taking into account \hyperref[ccc]{Lemma~\ref{ccc}}(ii) we deduce that $\bar{P}$ has order exactly $p$.
Then $\bar{P}=[\bar{P},\bar{Q}]$.
Let $\bar{a}$ be an element of order $q$ in $\bar{Q}$ and set $\bar{Q}_1=\langle a\rangle$.
Either the group $\bar{P}\bar{Q}_1$ is Frobenius or $[\bar{P},\bar{a}]=1$.
We denote by $P$ and $Q$ some $p$-Sylow and $q$-Sylow subgroups of $G$ which map onto $\bar{P}$ and $\bar{Q}$, respectively.

Suppose first that $\bar{P}\bar{Q}_1$ is a Frobenius group.
Assume additionally that $Q_0=Q\cap N$ is nontrivial.
Consider the natural action of $\bar{P}\bar{Q}_1$ on $Q_0$. Then
\hyperref[fro]{Lemma~\ref{fro}} shows that $C_{Q_0}(\bar{Q}_1)\neq1$.
This is a contradiction. Thus, $Q_0=1$ and $q\not\in\pi(N)$.
Remark that $\bar{a}$ induces a fixed-point-free automorphism of prime order $q$ of $NP$.
\hyperref[higm]{Theorem~\ref{higm}} now states that $NP$ is nilpotent.
Since $NP$ is CA and $N$ is the maximal abelian subgroup,
we conclude that $P\subseteq N$, a contradiction again.
Thus, $\bar{P}\bar{Q}_1$ cannot be a Frobenius group. 

It follows that $[\bar{P},\bar{a}]=1$. 
Let $b\in Q$ be an element such that $a\in\langle b\rangle$ and $\bar{b}$ generates $\bar{Q}$. 
Note that $NP$ is normal in $G$ and therefore by the Frattini argument $G=NN_G(P)$. 
Thus $b$ can be written as a product $xb_1$, where $x\in N$ and $b_1\in N_G(P)$. 
Here the order of $b_1$ equals that of $b$. 
So replacing if necessary $b$ by $b_1$ and $a$ by an element of order $q$ in $\langle b_1\rangle$ we can assume that $b$ normalizes $P$.

Since $[\bar{P},\bar{a}]=1$, we  deduce  from \hyperref[cc]{Lemma~\ref{cc}}(iii) that $C_P(a)$ is not contained in $N$. 
Since $G$ is CA and since $\bar{P}$ is of order $p$,
it follows that $C_P(b)$ is not contained in $N$ and so $P=P_0C_P(b)$, where $P_0=P\cap N$.
In this case $\bar{P}$ and $\bar{Q}$ commute, whence  $G/N$ is abelian. 
The proof is complete.
\end{proof}

In what follows we write $F(K)$ for the Fitting subgroup of a finite group $K$. 
We are now ready to prove \hyperref[t:virt_abel_intro]{Theorem~\ref{t:virt_abel_intro}}.
We restate it for the reader's convenience.

\begin{customthm}{\ref{t:virt_abel_intro}}
Let $G$ be an infinite profinite CA-group which is virtually abelian.
If~$N$ is the maximal open normal abelian subgroup of $G$,
then $G/N$ is isomorphic to a cyclic group, to a generalized quaternion group, 
or to  $\textup{SL}(2,3)$.
In particular, $G$ is prosoluble.
\end{customthm} 

\begin{proof} 
Let $\bar G=G/N$.
From \hyperref[l:fitt]{Lemma~\ref{l:fitt}} we know that for any odd prime $p\in\pi(\bar{G})$  the $p$-Sylow subgroup $\bar P$ of $\bar{G}$ is cyclic.
Together with \hyperref[l:pq]{Lemma~\ref{l:pq}} and Frobenius' normal $p$-complement theorem (\cite[Thm.~7.4.5]{Gorenstein:80}) 
this implies that $\bar{P}$ has a normal $p$-complement for any odd $p\in\pi(\bar{G})$.
We conclude that $\bar{G}$ is soluble.
Hence, $F(\bar{G})\neq1$ and moreover $F(\bar{G})$ either is cyclic or generalized quaternion. Recall that in a finite soluble group the Fitting subgroup contains its centralizer \cite[Thm.~6.1.3]{Gorenstein:80}.
If $F(\bar{G})$ is cyclic, then $\bar G$ is either cyclic or contains a Frobenius subgroup so \hyperref[l:pq]{Lemma~\ref{l:pq}} shows that $\bar{G}$ is cyclic.
Therefore we only need to deal with the case where $F(\bar{G})$ is a generalized quaternion.
If $F(\bar{G})$ is a generalized quaternion of order at least 16, then
\hyperref[ccc]{Lemma~\ref{ccc}}(iii) shows that $F(\bar{G})=\bar{G}$ and we are done.

Hence, without loss of generality assume that $F(\bar{G})$ is the quaternion group of order 8
and $\bar{G}$ has an element of order 3, that acts on $F(\bar{G})$ non-trivially.
Arguing as above we deduce from Frobenius' theorem that $\bar{G}$ has a normal 3-complement
whence $\bar{G}$ is isomorphic to the group $\textup{SL}(2,3)$.
\end{proof}


 Now we produce an example of a profinite CA-group for which the quotient over a normal abelian subgroup is isomorphic to  $\textup{SL}(2,3)$.
 
\begin{lemma}\label{bbb} 
The group $\textup{GL}(4,\mathbb{Z})$ contains a subgroup $S$ isomorphic to $\textup{SL}(2,3)$ 
whose nonidentity elements fix no nonzero element of $\mathbb{Z}^4$.
\end{lemma}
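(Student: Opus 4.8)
The plan is to realize the required copy of $\textup{SL}(2,3)$ as the unit group of the Hurwitz quaternions acting on the quaternions by left multiplication. This produces a faithful integral representation of degree $4$ which is fixed-point-free essentially because a division algebra has no zero divisors, so both the integrality and the freeness of the action come out of a single structural fact.

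The starting point is the classical isomorphism $\textup{SL}(2,3)\cong 2T$, where $2T$ is the binary tetrahedral group, the unique nonsplit double cover of $\textup{PSL}(2,3)\cong A_4$; both groups have order $24$, a normal Sylow $2$-subgroup isomorphic to $\mathbf{Q}_8$, and quotient $C_3$. Inside Hamilton's rational quaternion algebra $\mathbb{H}=\mathbb{Q}\oplus\mathbb{Q}i\oplus\mathbb{Q}j\oplus\mathbb{Q}k$, the group $2T$ is realized explicitly as the $24$ Hurwitz units
\[
\{\pm 1,\ \pm i,\ \pm j,\ \pm k\}\ \cup\ \bigl\{\tfrac{1}{2}(\pm 1\pm i\pm j\pm k)\bigr\},
\]
that is, as the group of units of the ring of Hurwitz quaternions $H=\mathbb{Z}\langle 1,i,j,\tfrac12(1+i+j+k)\rangle$, which is a free $\mathbb{Z}$-module of rank $4$.

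First I would fix a $\mathbb{Z}$-basis of $H$, thereby identifying $H$ with $\mathbb{Z}^4$ and $\mathrm{Aut}_{\mathbb{Z}}(H)$ with $\textup{GL}(4,\mathbb{Z})$. Each unit $u\in 2T$ acts on $H$ by left multiplication $L_u(x)=ux$; since $u^{-1}\in H$ we have $uH=H$, so $L_u$ is a $\mathbb{Z}$-linear automorphism of $H$, and $u\mapsto L_u$ is an injective homomorphism $2T\hookrightarrow\textup{GL}(4,\mathbb{Z})$ (injectivity is clear from $L_u(1)=u$). I then take $S$ to be its image, so that $S\cong 2T\cong\textup{SL}(2,3)$. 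It remains only to verify that $S$ acts fixed-point-freely: if $u\neq 1$ and $L_u(x)=x$ for some $x\in H\subseteq\mathbb{H}$, then $(u-1)x=0$ in $\mathbb{H}$, and since $\mathbb{H}$ is a division algebra with $u-1\neq 0$, this forces $x=0$. Hence no nonidentity element of $S$ fixes a nonzero vector of $\mathbb{Z}^4$, as required.

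The argument is essentially obstruction-free, and this is exactly what makes the quaternionic construction preferable to writing down generators by hand or to a character-theoretic realization. The only points deserving care are the identification $\textup{SL}(2,3)\cong 2T$ and the check that the $24$ listed Hurwitz units do form a multiplicative group (closure under multiplication being the one small computation worth recording). The decisive input is that left multiplication by a unit on a division algebra has no nonzero fixed points, which simultaneously supplies the integrality (through the Hurwitz lattice $H$) and the freeness of the action, bypassing any Schur-index considerations for the underlying quaternionic $2$-dimensional representation.
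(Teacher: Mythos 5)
Your proof is correct, and it takes a genuinely different route from the paper. The paper writes down explicit $2\times 2$ matrices $a,d$ over $\mathbb{Q}(\zeta)$ ($\zeta$ a primitive cube root of unity) generating a copy of $\textup{SL}(2,3)$ inside $\textup{GL}(2,\mathbb{Q}(\zeta))$, restricts scalars to view this inside $\textup{GL}(4,\mathbb{Q})$, and then invokes a theorem of Curtis--Reiner (every rational representation of a finite group is equivalent to an integral one) to conjugate the group into $\textup{GL}(4,\mathbb{Z})$; fixed-point-freeness is checked by observing that no nonidentity element has eigenvalue $1$. Your quaternionic construction replaces both of these steps by a single structural fact: the Hurwitz order is a rank-$4$ lattice visibly stable under left multiplication by its unit group $2T\cong\textup{SL}(2,3)$, so integrality is automatic and no appeal to Curtis--Reiner is needed, while fixed-point-freeness follows from the absence of zero divisors in the rational quaternion division algebra rather than from an eigenvalue computation. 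The two constructions are in fact the same representation in disguise --- $\mathbb{Q}(\zeta)=\mathbb{Q}(\sqrt{-3})$ embeds in and therefore splits Hamilton's quaternion algebra, so the paper's $2$-dimensional $\mathbb{Q}(\zeta)$-representation is the scalar extension of yours --- but your version is more self-contained and makes the integral structure explicit from the start, whereas the paper's gives concrete matrices at the price of a nontrivial citation. The only facts you quote without proof (the isomorphism $\textup{SL}(2,3)\cong 2T$ and the closure of the $24$ Hurwitz units under multiplication) are classical and routine to verify, so there is no gap.
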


\begin{proof} 
Let $K=\mathbb{Q}(\zeta)$, where $\zeta$ is a primitive third root of unity; 
in particular $\zeta^2+\zeta+1=0$.
Set $a=\left(\begin{smallmatrix}0&\phantom{^2}1\\ -1&\phantom{^2}0\end{smallmatrix}\right)$, $d=\left(\begin{smallmatrix}\zeta&\phantom{^2}1\\0&\phantom{^2}\zeta^2\end{smallmatrix}\right)$, and $b:=dad^{-1}$.
Then $a^2=-\textup{Id}=b^2$ and $d$ has order $3$.
We have 
$$d^2=d^{-1}=\begin{pmatrix}\zeta^2&-1\\0&\zeta\end{pmatrix}\quad \textup{ and} \qquad b=\begin{pmatrix}-\zeta^2&-\zeta\\-\zeta&\zeta^2\end{pmatrix}.$$
In addition, $b^{-1}ab=a^{-1}$; so  $\langle a,b\rangle$ is isomorphic to $\mathbf{Q}_8$. 

Moreover, $d^{-1}ad 
= \left(\begin{smallmatrix}\zeta&-\zeta^2\\-\zeta^2&-\zeta\end{smallmatrix}\right)=ba$.
Therefore, $d$ normalizes $\langle a,b\rangle$.
It follows that $\langle a,d\rangle\cong \textup{SL}(2,3)$ 
(see \cite[Sec.~8.2]{Rose:09}). 

Note that $[K:\mathbb{Q}]=2$ and $1,\zeta$ (or $\zeta,\zeta^2$) can be chosen as a basis of $K$ as a vector space over $\mathbb{Q}$.
Therefore, the $2$-dimensional vector space $K^2$ over $K$ can be viewed as a $4$-dimensional vector space over $\mathbb{Q}$.
It follows that the mapping $K\rightarrow \mathbb{Q}^2$ yields an injective mapping $\langle a,g\rangle\rightarrow \textup{GL}(4,\mathbb{Q})$.
Denote by $S$ the image of $\langle a,g\rangle$ in $\textup{GL}(4,\mathbb{Q})$.
Then $S\cong \textup{SL}(2,3)$.
By \cite[Thm.~73.6]{CR:62}, 
any representation of a finite group over $\mathbb{Q}$ is equivalent to a representation over $\mathbb{Z}$, 
so we get a representation  $\textup{SL}(2,3)\rightarrow \textup{GL}(4,\mathbb{Z})$, 
that is, we can assume that  $S\subseteq \textup{GL}(4,\mathbb{Z})$. 

It is rather obvious that every nonidentity element of $\langle a,g\rangle$ has no eigenvalue $1$, 
and hence acts fixed-point-freely on $K^2$.
As the mapping $K^2\rightarrow \mathbb{Q}^4$ above is an isomorphism of  $\mathbb{Q}$-vector spaces,
one observes that  every nonidentity element of $S$ has no eigenvalue $1$ on $\mathbb{Q}^4$, 
and hence on $\mathbb{Z}^4$.
So the lemma follows. 
\end{proof}

Note that $\textup{SL}(2,3)$ has no faithful representation into $\textup{GL}(2,\mathbb{Q})$, 
as already $\mathbf{Q}_8$ is not a subgroup of $\textup{GL}(2,\mathbb{Q})$, see 
\cite{Serre:08}, where Serre provides a criterion 
for $\mathbf{Q}_8$ to be a subgroup of  $\textup{GL}(2,K)$ and $\textup{GL}(2,R)$, 
where $K$ is an arbitrary finite extension of $\mathbb{Q}$ and $R$ is the ring of integers of $K$.

\begin{lemma}\label{aaa}
Let $G$ be a pro-$p$ group that is a semidirect product of 
a nontrivial normal subgroup $M$ by a $k$-generator group $A$.
Then $G$ is at least $(k+1)$-generated.
\end{lemma}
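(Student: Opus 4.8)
The plan is to read off the minimal number of (topological) generators from the Frattini quotient, using the standard pro-$p$ fact that $d(G)=\dim_{\mathbb{F}_p} G/\Phi(G)$, where $\Phi(G)=\overline{G^p[G,G]}$ and $d(\cdot)$ denotes the minimal number of generators. Let $\pi\colon G\to A$ be the projection with kernel $M$ furnished by the decomposition $G=M\rtimes A$. Since $\pi$ is a surjection of pro-$p$ groups it satisfies $\pi(\Phi(G))=\Phi(A)$, so it induces a surjection of $\mathbb{F}_p$-vector spaces $\bar\pi\colon G/\Phi(G)\to A/\Phi(A)$ whose kernel is exactly $M\Phi(G)/\Phi(G)$ (because $\pi(g)\in\Phi(A)=\pi(\Phi(G))$ if and only if $g\in M\Phi(G)$). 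Hence $d(G)=\dim_{\mathbb{F}_p}G/\Phi(G)=\dim_{\mathbb{F}_p}A/\Phi(A)+\dim_{\mathbb{F}_p}\bigl(M\Phi(G)/\Phi(G)\bigr)=k+\dim_{\mathbb{F}_p}\bigl(M\Phi(G)/\Phi(G)\bigr)$, and the whole lemma reduces to showing that this last summand is nonzero, i.e. that $M\not\subseteq\Phi(G)$.

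The heart of the argument, and the step where the semidirect product hypothesis is indispensable, is precisely the claim $M\not\subseteq\Phi(G)$. I would argue by contradiction: suppose $M\subseteq\Phi(G)$. Since $A$ is a complement, $G=MA$, and therefore $G=MA\subseteq\Phi(G)A$, that is $A\Phi(G)=G$. The defining property of the Frattini subgroup of a pro-$p$ group---its elements are nongenerators---then forces the closed subgroup $A$ to generate $G$ topologically, whence $\overline{\langle A\rangle}=A=G$. This contradicts $M\neq1$ together with $M\cap A=1$. Therefore $M\not\subseteq\Phi(G)$, so $\dim_{\mathbb{F}_p}\bigl(M\Phi(G)/\Phi(G)\bigr)\geq1$ and $d(G)\geq k+1$, as asserted.

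The only genuine subtlety is this Frattini step: it is what converts the purely algebraic complement condition into a statement about generation, and it is essential that $A$ be a closed complement rather than an arbitrary subgroup. For a non-complemented normal subgroup one may well have $M\subseteq\Phi(G)$---as already happens for $M=pG$ inside $G=\mathbb{Z}_p$, where $\Phi(G)=pG=M$ and $M$ admits no complement---so the lemma is false without the semidirect factorization. All the ingredients invoked (the formula $d(G)=\dim_{\mathbb{F}_p}G/\Phi(G)$, the compatibility $\pi(\Phi(G))=\Phi(A)$ under surjections, and the nongenerator property of $\Phi(G)$) are the routine pro-$p$ analogues of the corresponding facts for finite $p$-groups and require no further input.
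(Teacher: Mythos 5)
Your proof is correct, and it follows the same overall strategy as the paper's---counting generators via the Frattini quotient---but the execution is genuinely different, and it supplies the one step the paper does not prove. The paper's argument observes that $[M,A]\subseteq\Phi(G)$, passes to the quotient $G/[M,A]$, which by the semidirect product structure is the direct product $(M/[M,A])\times A$, and then uses additivity of minimal generator numbers for direct products of pro-$p$ groups; the nontriviality needed there, namely $M\neq[M,A]$, is simply asserted (it follows, e.g., from pronilpotency of pro-$p$ groups). You never form $G/[M,A]$: instead you split $G/\Phi(G)$ along the projection $\pi\colon G\to A$, identify the kernel of the induced map as $M\Phi(G)/\Phi(G)$, and prove the crucial nontriviality claim $M\not\subseteq\Phi(G)$ outright, via the non-generator property of $\Phi(G)$ combined with the complement condition $M\cap A=1$. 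Note that the two nontriviality claims are equivalent: since $[M,A]\subseteq\Phi(G)$ and a nontrivial pro-$p$ group is never its own Frattini subgroup, $M\neq[M,A]$ holds if and only if $M\not\subseteq\Phi(G)$. So both proofs hinge on the same fact; what yours buys is a self-contained justification of it and an explicit marker of where the semidirect hypothesis is indispensable (your $\mathbb{Z}_p$ example showing failure for a non-complemented normal subgroup is a useful addition), while the paper's version buys brevity by quoting the direct-product decomposition. One shared caveat: both arguments really prove $d(G)\geq d(A)+1$, so the statement's ``$k$-generator'' must be read as $d(A)=k$, which is how it is used later for $A\cong\mathbf{Q}_8$.
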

\begin{proof} 
Note that $M\neq[M,A]$.
Since $[M,A]\subseteq\Phi(G)$, we pass to the quotient $G/[M,A]$ to see that 
it is just the direct product of $M/[M,A]$ and $A$, whence the result is immediate.
\end{proof}

\begin{proposition}
There exists a profinite CA-group $G$ having an abelian normal subgroup $N$ 
such that $G/N$ is isomorphic to $\textup{SL}(2,3)$.
\end{proposition}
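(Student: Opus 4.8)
The plan is to realize $G$ as a \emph{non-split} extension at the prime $2$. First I would explain why the obvious split construction fails. If one forms the semidirect product $\mathbb{Z}_2^4\rtimes\textup{SL}(2,3)$ using the faithful, fixed-point-free action supplied by \hyperref[bbb]{Lemma~\ref{bbb}} (viewing $S\subseteq\textup{GL}(4,\mathbb{Z})\subseteq\textup{GL}(4,\mathbb{Z}_2)$), then the unique involution $z=-\textup{Id}$ of $\textup{SL}(2,3)$ acts on $N=\mathbb{Z}_2^4$ as $-1$, so $C_N(z)=0$ and hence $C_G(z)=C_{\textup{SL}(2,3)}(z)=\textup{SL}(2,3)$ is non-abelian. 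Thus $G$ cannot be CA when the extension splits over the Sylow $2$-subgroup, and the whole content of the construction is to arrange non-splitness there.

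So I set $N=\mathbb{Z}_2^4$ with the $\textup{SL}(2,3)$-action of \hyperref[bbb]{Lemma~\ref{bbb}} and build the preimage $P$ of the Sylow $2$-subgroup $\mathbf{Q}_8$ as a pro-$2$ extension $1\to N\to P\to\mathbf{Q}_8\to1$. I would force $P$ to be non-split by making it $2$-generated: since $\mathbf{Q}_8$ is $2$-generated, \hyperref[aaa]{Lemma~\ref{aaa}} shows that a split extension $N\rtimes\mathbf{Q}_8$ would need at least three generators, so a $2$-generated $P$ is automatically non-split. Concretely this is arranged by choosing the fourth powers $\tilde i^{\,4},\tilde j^{\,4}\in N$ of lifts of the order-$4$ generators so as to fill the $\mathbf{Q}_8$-coinvariants of $N/2N$, forcing $N\subseteq\Phi(P)$. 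The group $G$ is then assembled as $G=P\rtimes C_3$: because $C_3$ is coprime to the pro-$2$ group $P$, the order-$3$ automorphism cyclically permuting $i,j,k$ and acting on $N$ through \hyperref[bbb]{Lemma~\ref{bbb}} lifts to an action on $P$, and by construction $G/N\cong\textup{SL}(2,3)$.

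It then remains to check that $G$ is CA, which is where the real work lies. Most centralizers are immediate from fixed-point-freeness: for $x\in N^{\#}$ one gets $C_G(x)=N$, while for a lift of an element of order $3$, $4$ or $6$ the centralizer meets $N$ trivially and so embeds into the corresponding cyclic centralizer of $\textup{SL}(2,3)$ (using \hyperref[cc]{Lemma~\ref{cc}}(iii) in the coprime cases), hence is abelian. The one delicate case is the involution. Applying \hyperref[cc]{Lemma~\ref{cc}}(iii) to an order-$3$ element $\tilde t$ acting on the normal pro-$2$ subgroup $P$ gives $C_P(\tilde t)N/N=C_{\mathbf{Q}_8}(t)=\langle z\rangle$ and $C_P(\tilde t)\cap N=0$, so $C_P(\tilde t)$ is a genuine involution $z'$ lifting $z$ and commuting with $\tilde t$. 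Since $z'$ inverts $N$ we have $C_N(z')=0$, whence $C_G(z')$ embeds into $\textup{SL}(2,3)$ with image containing $\langle z,t\rangle\cong C_6$; as the only subgroups of $\textup{SL}(2,3)$ containing $C_6$ are $C_6$ and $\textup{SL}(2,3)$ itself, it suffices to rule out the latter.

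The hard part — the heart of the matter — is to prove that $C_P(z')=\langle z'\rangle$, equivalently that no lift of a noncentral element of $\mathbf{Q}_8$ commutes with $z'$. A short computation shows that a lift of the order-$4$ element $i$ commutes with $z'$ if and only if the commutator $[z',\tilde i]$ lies in $2N$; the required condition is therefore exactly that $[z',\tilde i]$ and $[z',\tilde j]$ fall outside $2N$, which is precisely the cohomological meaning of the non-splitness secured by the $2$-generation arrangement above. Granting this, $C_G(z')$ contains no lift of an order-$4$ element, its image is $C_6$, and so $C_G(z')\cong C_6$ is abelian. This completes the verification that $G$ is a CA-group with $G/N\cong\textup{SL}(2,3)$, proving the proposition. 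I expect the careful identification of the extension class making $P$ simultaneously $2$-generated and satisfying the commutator condition to be the only genuinely technical point.
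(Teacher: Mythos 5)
Your overall strategy (reduce to constructing an extension of $\textup{SL}(2,3)$ by $N=\mathbb{Z}_2^4$ whose pro-$2$ Sylow subgroup $P$ does not split over $N$, then verify CA) is reasonable, and your observation that the split extension fails is correct. But the proposal has a decisive gap: you never establish that such an extension exists, and the concrete mechanism you propose for producing one cannot work. Since $z=i^2$ acts on $N$ as $-\textup{Id}$ (Lemma~\ref{bbb}) and $N$ is torsion-free, any element $w\in P$ lying over $z$ satisfies $w^2\in N$ and $w^2={}^{w}(w^2)=-w^2$, hence $w^2=0$; that is, \emph{every} lift of $z$ is an involution, in \emph{every} extension of $\mathbf{Q}_8$ by this module. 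Consequently $\tilde i^{\,4}=(\tilde i^{\,2})^2=1$ identically, for every lift $\tilde i$ of every order-$4$ element, so ``choosing the fourth powers $\tilde i^{\,4},\tilde j^{\,4}$ to fill the coinvariants of $N/2N$'' prescribes invariants that are forced to vanish: your route to $2$-generation (hence to non-splitness via Lemma~\ref{aaa}) collapses at the first step. A second, independent gap is the assertion that the $C_3$-action lifts to $P$ ``because $C_3$ is coprime to the pro-$2$ group $P$.'' Lifting an automorphism to a given extension requires the class of that extension in $H^2(\mathbf{Q}_8,N)$ to be invariant under the induced action; coprimality identifies the invariant classes with those restricted from $H^2(\textup{SL}(2,3),N)$, but it does not make an arbitrarily chosen class invariant, so the whole construction would have to be carried out equivariantly — a constraint your sketch does not address. (By contrast, your ``hard part'' is not actually hard: once non-splitness is granted, if some lift of an order-$4$ element commuted with $z'$, then $C_G(z')$, which meets $N$ trivially, would map onto a subgroup of $\textup{SL}(2,3)$ containing $C_6$ and an element of order $4$, hence onto all of $\textup{SL}(2,3)$, producing a complement to $N$ and contradicting non-splitness. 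The same remark handles the involutions other than $z'$, whose centralizers — possibly mapping onto $\mathbf{Q}_8$ — your case analysis omits.)

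It is worth seeing how the paper circumvents exactly the point where your proposal is gapped. Instead of building the non-split extension cohomologically, it takes the split (non-CA) group $H=V\rtimes S$ with $S=\textup{SL}(2,3)=\langle a,d\rangle$, picks a nonzero $v\in V$, and sets $G=\langle va,\,d\rangle$, $N=G\cap V$: a $2$-generated group with $G/N\cong\textup{SL}(2,3)$, whose existence is trivial. The CA property is then proved by contradiction: a non-abelian centralizer would contain $B\cong\mathbf{Q}_8$ with $B\cap N=1$, so the Sylow subgroup $P$ of $G$ would split as $NB$; Lemma~\ref{aaa} then forces $P$ to need at least $3$ generators, while $2$-generation of $G$ bounds $|P/P^2|\leq 8$, so $|P/P^2|=8$; no group of order $8$ admits a fixed-point-free automorphism of order $3$, so $C_{P/P^2}(d)\neq 1$, whence $a^2\notin P^2$ and $d$ acts fixed-point-freely on $P^2$; Theorem~\ref{higm} makes $P^2$ nilpotent, and then the central involution of $B$ centralizes a nontrivial element of $V$, contradicting Lemma~\ref{bbb}. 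If you wish to salvage your direct approach, you must genuinely compute $H^2(\textup{SL}(2,3),N)$ (or exhibit an explicit cocycle) and show it contains a class whose restriction to $\mathbf{Q}_8$ is non-split — possibly after replacing $N$ by a suitable $S$-stable sublattice — which is precisely the work the paper's subgroup trick avoids.
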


\begin{proof} 
Let $S=\textup{SL}(2,3)=\langle a,d\rangle$, where $a$ is of order 4 and $d$ of order 3.
The $2$-Sylow subgroup in $S$ is $Q=\langle a,a^d\rangle$ and $a^2$ is central in $S$.

Let $V$ be the $4$-dimensional free $\mathbb{Z}_2$-module.
By \hyperref[bbb]{Lemma~\ref{bbb}}, $S$ can be embedded into $\textup{GL}(4,\mathbb{Z})$
and hence in $\textup{GL}(4,\mathbb{Z}_2)=GL(V)$ in such a way that $C_V(x)=0$ for each nontrivial $x\in S$.  To see this simply observe that 1 is not an eigenvalue for $x$.

Of course $a^2$ acts on $V$ taking each vector $v\in V$ to $-v$. 
Let $H$ be the natural semidirect product of $V$ by $S$.
Clearly, $C_H(d)=\langle da^2\rangle$. 
Note that if $h\in H$ and $C_H(h)$ is non-abelian, then necessarily $C_H(h)$ has a subgroup isomorphic to $\mathbf{Q}_8$. 
Choose any nonzero vector $v\in V$. 
Consider the 2-generated subgroup $G=\langle va,d\rangle$ and set $N=G\cap V$. 
We see that $G/N\cong \textup{SL}(2,3)$.
Further $N\neq\{0\}$ because otherwise $d$ would centralize $(va)^2$ which would lead to a contradiction.
Note that $a^2$ is contained in $G$ because otherwise $d$ would act fixed-point-freely on the $2$-Sylow  subgroup $P$ of $G$.
By \hyperref[higm]{Theorem~\ref{higm}} that would imply that $P$ is nilpotent; 
a contradiction with the assumption that $C_V(x)=0$ for each $x\in S$. 

Suppose that $G$ is not CA.
Then $G$ contains an element whose centralizer has a subgroup $B$ isomorphic to $\mathbf{Q}_8$.
It follows that $P$ is the semidirect product $NB$.
By \hyperref[aaa]{Lemma~\ref{aaa}} the subgroup $P$ has at least 3 generators.
Let us have a closer look at the action of $d$ on $\bar{P}=P/P^2$.
Since $\bar{P}\langle d\rangle$ is a quotient of $G$, the group $\bar{P}\langle d\rangle$ is 2-generator
and so $\bar{P}$ cannot be of order greater than 8.
This is because, since $d$ is of order 3, any $d$-orbit of an element of $\bar{P}$ generates a subgroup of order at most 8.
Taking into account that $P$ has at least 3 generators (\hyperref[aaa]{Lemma~\ref{aaa}}) we conclude that the order of $\bar{P}$ is precisely 8. 
Obviously, groups of order 8 do not admit fixed-point-free automorphisms of order 3 so necessarily $C_{\bar{P}}(d)\neq1$. 
Since $C_H(d)=\langle da^2\rangle$, we deduce that $a^2\not\in P^2$ and therefore $d$ acts fixed-point-freely on $P^2$. 
Hence, by \hyperref[higm]{Theorem~\ref{higm}}, $P^2$ is nilpotent. 
We see that $B^2$ has nontrivial centralizer in $V$. 
Clearly, this implies that $a^2$ has nontrivial centralizer in $V$, a contradiction.

Thus, the group $G$ is CA with $G/N\cong \textup{SL}(2,3)$.\end{proof}

\section{Virtually pro-\texorpdfstring{$p$}{p} CA-groups}
\label{s:virt_pro-p}

We turn now to the structure of profinite CA-groups which are virtually pro-$p$. 
Theorems \ref{t:virt_pro-p_intro} and \ref{t:simple_intro}
will be proved in this section.

The following lemma is almost obvious.

\begin{lemma}\label{v4}
Let $G$ be an infinite non-abelian virtually pro-$p$ CA-group. 
Any $q$-Sylow subgroup of $G$ for a prime $q\neq p$ is cyclic.
\end{lemma}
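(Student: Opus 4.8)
The plan is to work with the maximal open normal pro-$p$ subgroup $N=O_p(G)$; since we are in the genuinely virtually pro-$p$ situation (the virtually abelian case being settled in \hyperref[t:virt_abel_intro]{Theorem~\ref{t:virt_abel_intro}}), $N$ is non-abelian, and this is what will ultimately be contradicted. A chosen $q$-Sylow subgroup $Q$ meets $N$ trivially, so $Q\cong QN/N$ is a finite $q$-group. First I would record the elementary observation that every Sylow subgroup of a CA-group is abelian: if $Q\neq1$, pick a nontrivial central element $z\in Z(Q)$; then $Q\subseteq C_G(z)$, and $C_G(z)$ is abelian because $G$ is CA and $z\neq1$. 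In particular $Q$ is abelian, which already rules out the generalized quaternion alternative and means that if $Q$ is not cyclic it must contain a subgroup $B\cong C_q\times C_q$.

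Assume, for contradiction, that $Q$ is non-cyclic and fix such a $B$. Since $B$ is a noncyclic abelian group acting coprimely on the pronilpotent group $N$, \hyperref[cc]{Lemma~\ref{cc}}(iv) gives $N=\prod_{b\in B^{\#}}C_N(b)$. As $N$ is infinite, at least one factor $C_N(b_0)$ is nontrivial. The CA-hypothesis now forces these centralizers to collapse: for any $1\neq x\in C_N(b_0)$ we have $x\in C_G(b_0)$, while $B\subseteq C_G(b_0)$ because $B$ is abelian; since $C_G(b_0)$ is abelian, $x$ commutes with all of $B$, that is $x\in C_N(B)$. Hence every nontrivial factor $C_N(b)$ coincides with $C_N(B)$, and therefore $N=C_N(B)$; in other words $B$ centralizes the whole of $N$.

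Finally, choosing $1\neq b\in B$ we obtain $N\subseteq C_G(b)$, which is abelian, so $N$ itself is abelian, contradicting the non-abelianity of $N$. This completes the argument. The main obstacle is the middle step: a priori \hyperref[cc]{Lemma~\ref{cc}}(iv) only furnishes one nontrivial fixed-point subgroup, and one must leverage commutativity-transitivity (through the abelian centralizer $C_G(b_0)$) to upgrade this into the statement that $B$ centralizes all of $N$. The only other point requiring care is the initial reduction ensuring that $N=O_p(G)$ is non-abelian, which is exactly where the hypothesis that $G$ lies in the properly virtually pro-$p$ regime (rather than being merely non-abelian) enters, since an abelian open $N$ would instead place $G$ among the virtually abelian groups.
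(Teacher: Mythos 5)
Your middle paragraphs are sound, and they are precisely the content that the paper compresses into the single line ``By Lemma~\ref{cc}(iv), all abelian subgroups of $Q$ are cyclic'': the application of Lemma~\ref{cc}(iv) to the coprime action of $B\cong C_q\times C_q$ on $N=O_p(G)$, followed by the CA-upgrade showing that every nontrivial $C_N(b)$ coincides with $C_N(B)$, so that $B$ centralizes $N$ and $N$ is then abelian, is exactly the missing detail behind that line; likewise your preliminary observation that Sylow subgroups of a CA-group are abelian replaces the paper's disposal of the generalized quaternion alternative (nilpotent non-abelian subgroups cannot occur in a CA-group).

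The genuine gap is your opening reduction, namely the claim that one may assume $N=O_p(G)$ is non-abelian because ``the virtually abelian case is settled by Theorem~\ref{t:virt_abel_intro}.'' The lemma's hypotheses are only that $G$ is infinite, non-abelian, virtually pro-$p$ and CA; this does not exclude $G$ being virtually abelian (the two classes overlap), and Theorem~\ref{t:virt_abel_intro} says nothing about $q$-Sylow subgroups being cyclic. Worse, no argument can close this gap, because the conclusion genuinely fails in the overlap: take distinct odd primes $p\neq q$, let $M=\mathbb{Z}_p\times C_q\times C_q$, and let $G=M\rtimes\langle\sigma\rangle$, where $\sigma$ has order $2$ and acts by inversion. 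Since $M$ has no $2$-torsion, $\sigma$ acts fixed-point-freely, so $G$ is a profinite Frobenius group with abelian kernel and cyclic complement, hence a CA-group by the remark opening Section~\ref{s:virt_abel} (or by checking directly that $C_G(x)=M$ for $1\neq x\in M$ and $C_G(y\sigma)=\langle y\sigma\rangle$). This $G$ is infinite, non-abelian and virtually pro-$p$, yet its $q$-Sylow subgroup is $C_q\times C_q$, which is not cyclic. So what your argument actually establishes is the disjunction ``either $Q$ is cyclic or $O_p(G)$ is abelian,'' and that is the most that is true. For what it is worth, the paper's own proof has exactly the same defect: its appeal to Lemma~\ref{cc}(iv) tacitly assumes that $B$ centralizing $O_p(G)$ is absurd, which fails precisely when $O_p(G)$ is abelian; your write-up at least makes explicit where non-abelianness of $O_p(G)$ is needed. (The issue propagates: in the example above, any $K$ with $G=O_p(G)K$ surjects onto $(C_q\times C_q)\rtimes C_2$, a Frobenius group that is not metacyclic, contrary to the statement of Theorem~\ref{t:virt_pro-p_intro}.)
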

\begin{proof} 
Let $Q$ be a $q$-Sylow subgroup of $G$.
By \hyperref[cc]{Lemma~\ref{cc}}(iv), all abelian subgroups of $Q$ are cyclic.
We deduce that either $Q$ is cyclic or is generalized quaternion.
Since the quaternion group is nilpotent and non-abelian, the lemma follows.
\end{proof}

It is well-known that non-abelian finite simple groups have noncyclic $2$-Sylow subgroups \cite[Thm.~7.6.1]{Gorenstein:80}. 
Therefore we deduce from \hyperref[v4]{Lemma~\ref{v4}} the following result:

\begin{proposition}\label{p:prosoluble}
Let $p$ be an odd prime.
An infinite virtually pro-$p$ CA-group is prosoluble. \hfill \qedsymbol
\end{proposition}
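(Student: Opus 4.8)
The plan is to read off the conclusion from Lemma~\ref{v4} together with the cited fact that non-abelian finite simple groups have noncyclic $2$-Sylow subgroups. First I would dispose of the abelian case: if $G$ is abelian it is pronilpotent and hence prosoluble, so nothing is to be proved. Thus I may assume that $G$ is non-abelian, which is precisely the situation covered by Lemma~\ref{v4}. As $p$ is odd we have $2\neq p$, so Lemma~\ref{v4} applies with $q=2$ and tells us that a $2$-Sylow subgroup of $G$ is cyclic; note that it is in fact finite, since it embeds into the finite quotient of $G$ by an open pro-$p$ subgroup.

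To conclude I would show that every finite continuous quotient $Q$ of $G$ is soluble, which is equivalent to prosolubility. Arguing by contradiction, if some $Q$ is not soluble then it has a non-abelian finite simple composition factor $S$, and $S$ is then a section of $G$. The key elementary observation is that a $2$-Sylow subgroup of any section $K/L$ of $G$ is a section of a $2$-Sylow subgroup of $G$: a $2$-Sylow subgroup of $K$ sits inside one of $G$, and its image in $K/L$ is a $2$-Sylow subgroup of $K/L$. Since the $2$-Sylow subgroups of $G$ are cyclic and every section of a cyclic group is cyclic, the simple section $S$ would have cyclic $2$-Sylow subgroups, contradicting the quoted result. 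Hence every finite quotient of $G$ is soluble and $G$ is prosoluble.

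Alternatively, the same contradiction can be obtained directly from Lemma~\ref{series}: if $G$ were not prosoluble, one of the factors of its characteristic series would be a Cartesian product of non-abelian finite simple groups, and projecting onto a coordinate would again produce a non-abelian finite simple section of $G$ with noncyclic $2$-Sylow subgroup. I do not anticipate any real obstacle in this argument; the only point needing a word of care is the passage between sections of $G$ and their Sylow subgroups, which is entirely standard, together with the routine inverse-limit reduction to finite quotients used throughout the paper.
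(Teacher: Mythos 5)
Your proposal is correct and follows the same route as the paper, which deduces the proposition in one line from Lemma~\ref{v4} (cyclic $2$-Sylow subgroups, since $p$ is odd) and the fact that non-abelian finite simple groups have noncyclic $2$-Sylow subgroups; you have merely spelled out the standard reduction to finite quotients and the passage from sections to Sylow subgroups that the paper leaves implicit.
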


We are now in position to complete the proof of \hyperref[t:virt_pro-p_intro]{Theorem~\ref{t:virt_pro-p_intro}}, which we restate here for the reader's convenience.

\begin{customthm}{\ref{t:virt_pro-p_intro}} 
Let $G$ be an infinite prosoluble CA-group.
If $G$ is virtually pro-$p$, 
then $G$ is a product $O_p(G)K$,
where the (finite) subgroup $K$ is either cyclic or a metacyclic Frobenius group.
\end{customthm}

\begin{proof} 
Let $P=O_p(G)$ and $T=O_{p,p'}(G)$. 
Let $H$ be a Hall $p'$-subgroup of $T$. 
By the Frattini argument $G=PN_G(H)$.
Note that $N_G(H)\cap P=C_P(H)$.
Hence, if $N_G(H)$ is infinite, $N_G(H)$ is virtually abelian.
By \hyperref[t:virt_abel_intro]{Theorem~\ref{t:virt_abel_intro}}, 
$N_G(H)$ is a semidirect product of an open abelian subgroup by a cyclic subgroup.
Therefore there exists a cyclic $p$-subgroup $B$ such that $N_G(H)=C_P(H)HB$, where $C_P(H)H$ is abelian.
It follows that $G=PHB$. Of course, $HB$ is either cyclic or  a Frobenius group.

Suppose now $N_G(H)$ is finite. 
By the classification of finite CA-groups (\hyperref[t:finite_CA]{Theorem~\ref{t:finite_CA}}),
$N_G(H)$ is a Frobenius group with abelian kernel and cyclic complement.
The result follows.
\end{proof}

Next we give an example of a profinite CA-group which is neither pro-$p$ nor virtually abelian. 
We use  \cite[Lemma~3.9]{FGRS:16} to show that $\textup{PSL}(2,\mathbb{Q}_p)$ is a CA-group if, and only if, $p=2$.

\begin{lemma}\label{squares}
Let $p$ be a prime. 
The number $-1$ is a sum of two squares over $\mathbb{Q}_p$ if, and only if, $p>2$.
\end{lemma}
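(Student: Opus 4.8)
The plan is to prove the two implications separately: for $p>2$ I will exhibit a solution of $x^2+y^2=-1$ in $\mathbb{Q}_p$, while for $p=2$ I will show that no such solution exists. For the second (negative) direction it is convenient to record first the elementary equivalence that $-1=x^2+y^2$ has a solution in $\mathbb{Q}_p$ if and only if the ternary form $x^2+y^2+z^2$ has a nonzero zero over $\mathbb{Q}_p$: take $z=1$ in one direction, and in the other direction divide by $z$ when $z\neq0$, treating the degenerate case $z=0$ (which forces $-1$ to be a square) separately.

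For an odd prime $p$ I would first produce an approximate solution modulo $p$ and then lift it by Hensel's lemma. The set of squares in $\mathbb{Z}/p\mathbb{Z}$, counting $0$, has cardinality $(p+1)/2$, so the two subsets $\{x^2\}$ and $\{-1-y^2\}$ of $\mathbb{Z}/p\mathbb{Z}$ each have $(p+1)/2$ elements; since $(p+1)/2+(p+1)/2=p+1>p$, they must intersect, giving integers $x_0,y_0$ with $x_0^2+y_0^2\equiv-1\pmod p$. As $-1\not\equiv0\pmod p$, at least one of $x_0,y_0$ is a unit modulo $p$, say $x_0$. Fixing $y_0$ and applying Hensel's lemma to $f(X)=X^2+y_0^2+1$, whose derivative $2x_0$ is a unit, produces $x_1\in\mathbb{Z}_p$ with $x_1^2+y_0^2=-1$. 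Hence $-1$ is a sum of two squares whenever $p>2$.

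For $p=2$ I would argue that no solution exists by a valuation-and-congruence argument. Suppose $x^2+y^2=-1$ in $\mathbb{Q}_2$; then $(x,y,1)$ is a nonzero zero of $x^2+y^2+z^2$, and after multiplying through by a suitable power of $2$ we may assume $x,y,z\in\mathbb{Z}_2$ with at least one of them a $2$-adic unit. The square of an odd element of $\mathbb{Z}_2$ is $\equiv1\pmod 8$, in particular $\equiv1\pmod 4$, whereas the square of an even element is $\equiv0\pmod 4$. Reducing $x^2+y^2+z^2=0$ modulo $4$ therefore gives $0\equiv k\pmod 4$, where $k\in\{1,2,3\}$ is the number of odd entries among $x,y,z$. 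This is impossible, so $-1$ is not a sum of two squares over $\mathbb{Q}_2$.

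The counting estimate and the reduction modulo $4$ are routine; the one point requiring care is the applicability of the Hensel lift, which is why I single out a coordinate that is a unit modulo $p$ to guarantee that $f'(x_0)$ is invertible. In the case $p=2$ the only subtlety is the normalization ensuring that not all of $x,y,z$ are divisible by $2$ before reducing, which is exactly what the ternary reformulation and the scaling step provide.
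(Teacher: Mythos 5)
Your proof is correct. For odd $p$ it coincides with the paper's argument: count the $(p+1)/2$ squares modulo $p$, observe that the sets $\{x^2\}$ and $\{-1-y^2\}$ must intersect, and lift by Hensel's lemma; you merely supply the details (the pigeonhole step and the choice of a unit coordinate so that $f'(x_0)=2x_0$ is invertible) that the paper leaves implicit. For $p=2$ you reach the same obstruction --- squares of $2$-adic units are $\equiv 1 \pmod 4$ while squares of even elements are $\equiv 0 \pmod 4$ --- but you package it differently: the paper first rules out solutions with $x,y\in\mathbb{Z}_2$ by reducing modulo $4$ and then handles non-integral solutions by explicit valuation bookkeeping, writing $x=2^m u$, $y=2^n v$ and forcing $m=n<0$, whereas you homogenize to the ternary form $x^2+y^2+z^2$, rescale to a primitive vector in $\mathbb{Z}_2^3$, and finish with a single reduction modulo $4$. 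Your normalization eliminates the paper's case distinctions (the $y=0$ case and the comparison of $m$ with $n$) at the cost of the small equivalence lemma relating the inhomogeneous and homogeneous problems; either way the core computation is identical.
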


\begin{proof} 
Consider the equation $x^2+y^2=-1$.
If $p>2$, take a solution over $\mathbb{Z}/p\mathbb{Z}$
(since there are $(p+1)/2$ squares therein)
and use Hensel's lemma to lift it to $\mathbb{Z}_p$.
If $p=2$, note that there is no solution modulo $4$;
so $x$ or $y$ cannot be in $\mathbb{Z}_2$.
Write $x=2^m u$ and $y=2^n v$ where $u$ and $v$ are units in $\mathbb{Z}_2$
and $m$ and $n$ are integers, or write $v=0$ if $y=0$.
It follows from the equation that $m=n<0$; so $u^2+v^2=-4^m$.
Since the square of any unit is congruent to $1$ modulo $4$,
we get a contradiction.
\end{proof}

\begin{proposition}
The group $\textup{PSL}(2,\mathbb{Z}_p)$ is CA if, and only if, $p=2$.
\end{proposition}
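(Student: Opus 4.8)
The plan is to translate the CA property of $\textup{PSL}(2,\mathbb{Z}_p)$ into the arithmetic dichotomy furnished by \hyperref[squares]{Lemma~\ref{squares}}, namely whether $-1$ is a sum of two squares. Two observations organize everything. First, the CA property passes to subgroups: if $h$ is a nontrivial element of a subgroup $H$ of a CA-group, then $C_H(h)=H\cap C(h)$ is abelian; since $\textup{SL}(2,\mathbb{Z}_p)\leq\textup{SL}(2,\mathbb{Q}_p)$ with common center $\{\pm\textup{Id}\}$, this gives an embedding $\textup{PSL}(2,\mathbb{Z}_p)\hookrightarrow\textup{PSL}(2,\mathbb{Q}_p)$ through which CA descends. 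Second, the obstruction to CA lives entirely in the centralizers of involutions: a noncentral element $g\in\textup{SL}(2,\mathbb{Q}_p)$ has abelian centralizer in $\textup{SL}$ (it is either $\pm$unipotent, with centralizer a root subgroup times the center, or regular semisimple, with centralizer a maximal torus), and the centralizer of $\bar g$ in $\textup{PSL}$ can be strictly larger only when $-g$ is conjugate to $g$, which forces the eigenvalues of $g$ to be square roots of $-1$, i.e. $g^2=-\textup{Id}$ and $\bar g$ an involution.

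For the implication $p>2\Rightarrow$ not CA I would argue directly over $\mathbb{Z}_p$. Put $J=\left(\begin{smallmatrix}0&1\\-1&0\end{smallmatrix}\right)$, so that $\bar J$ is an involution and $C_{\textup{SL}}(J)$ equals the infinite norm-one torus $T=\{\left(\begin{smallmatrix}c&e\\-e&c\end{smallmatrix}\right):c^2+e^2=1\}$. By \hyperref[squares]{Lemma~\ref{squares}} (whose Hensel lift keeps the solution in $\mathbb{Z}_p$) there are $a,b\in\mathbb{Z}_p$ with $a^2+b^2=-1$; set $M=\left(\begin{smallmatrix}a&b\\b&-a\end{smallmatrix}\right)\in\textup{SL}(2,\mathbb{Z}_p)$. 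A short computation gives $M^2=-\textup{Id}$, $MJM^{-1}=-J$, and $MtM^{-1}=t^{-1}$ for every $t\in T$. Thus $\bar M$ and all of $\bar T$ centralize $\bar J$, while $\bar M$ inverts $\bar T$; choosing $\bar t\in\bar T$ of order greater than $2$ (possible since $\bar T$ is infinite) produces two noncommuting elements $\bar t,\bar M$ of $C_{\textup{PSL}(2,\mathbb{Z}_p)}(\bar J)$. Hence this centralizer is non-abelian and $\textup{PSL}(2,\mathbb{Z}_p)$ is not CA.

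For the implication $p=2\Rightarrow$ CA I would pass to the field and invoke the structural input \cite[Lemma~3.9]{FGRS:16}, which I expect to describe the centralizers in $\textup{PSL}(2,K)$ and to show that, for $K$ of characteristic $\neq 2$, $\textup{PSL}(2,K)$ is CA precisely when $-1$ is not a sum of two squares in $K$. Since \hyperref[squares]{Lemma~\ref{squares}} gives that $-1$ is not a sum of two squares in $\mathbb{Q}_2$, this yields that $\textup{PSL}(2,\mathbb{Q}_2)$ is CA, and subgroup-closure then makes its subgroup $\textup{PSL}(2,\mathbb{Z}_2)$ a CA-group. The main obstacle is exactly this implication: verifying that over $\mathbb{Q}_2$ every involution centralizer is abelian requires the full centralizer analysis of \cite[Lemma~3.9]{FGRS:16} (equivalently, ruling out any $h$ with $hgh^{-1}=-g$ for involutions $\bar g$), whereas the non-abelian-centralizer construction for $p>2$ is elementary. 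One could instead run the same involution analysis directly inside $\textup{SL}(2,\mathbb{Z}_2)$, but the reduction to $\textup{PSL}(2,\mathbb{Q}_2)$ via subgroup-closure is cleaner.
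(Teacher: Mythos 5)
Your proof is correct and is essentially the paper's own argument: for $p=2$ the paper likewise quotes \cite[Lemma~3.9]{FGRS:16} together with Lemma~\ref{squares} and passes the CA property to the subgroup $\textup{PSL}(2,\mathbb{Z}_2)\leq\textup{PSL}(2,\mathbb{Q}_2)$, while for odd $p$ it exhibits the very matrices you do --- its $a=\left(\begin{smallmatrix}-x&y\\ y&x\end{smallmatrix}\right)$ with $x^2+y^2=-1$ is your $M$, its $b$ is your $J$, and its $c$ is a concrete norm-one torus element --- only packaged as a violation of commutativity-transitivity ($\bar a$ and $\bar c$ both commute with $\bar b$ but not with each other) rather than as a non-abelian centralizer of the involution $\bar J$. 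Two small points of comparison: your generic choice of $\bar t\in\bar T$ of order greater than $2$ is legitimate (infinitude of $\bar T$ alone would not suffice, but here $t^2=\pm\textup{Id}$ forces $2ce=0$ and hence $t\in\{\pm\textup{Id},\pm J\}$, so all but finitely many elements of $\bar T$ work) and makes the construction uniform in $p$, whereas the paper's concrete $c=\left(\begin{smallmatrix}3/5&4/5\\ -4/5&3/5\end{smallmatrix}\right)$ requires switching to the triple $(5,12,13)$ when $p=5$; and your reading of \cite[Lemma~3.9]{FGRS:16} --- CA precisely when $-1$ is \emph{not} a sum of two squares --- is the correct one, the paper's sentence having dropped the ``not'' even though its conclusion agrees with yours.
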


\begin{proof} 
By  \cite[Lemma~3.9]{FGRS:16}) the group $\textup{PSL}(2,K)$ over a field $K$ of characteristic 0 is CA if and only if $-1$ is the sum of two squares of elements of $K$. Therefore, by \hyperref[squares]{Lemma~\ref{squares}}, $\textup{PSL}(2,\mathbb{Q}_p)$ is CA if and only if $p=2$. Thus $\textup{PSL}(2,\mathbb{Z}_2)$ is CA as a subgroup of $\textup{PSL}(2,\mathbb{Q}_2)$. 
 
Let $p$ be an odd prime.
For $p\neq 5$ consider the following matrices of $\textup{SL}(2,\mathbb{Z}_p)$
\[a=\begin{pmatrix}-x&y\\ y&x\end{pmatrix}, \qquad
b=\begin{pmatrix}0&1\\ -1&0\end{pmatrix}, \qquad
c=\begin{pmatrix}\frac35&\frac45\\ -\frac45&\frac35\end{pmatrix} \, .\]
One can check that modulo the center
$a$ commutes with $b$, $b$ commutes with $c$, but $a$ does not commute with $c$; 
so $\textup{PSL}(2,\mathbb{Z}_p)$ is not CA.
For $p=5$, one can replace $(3,4,5)$ with $(5,12,13)$ in the matrix $c$ and use the same argument to conclude that $\textup{PSL}(2,\mathbb{Z}_5)$ is not CA.
\end{proof}

Note that the obvious epimorphism $\textup{PSL}(2,\mathbb{Z}_2)\longrightarrow \textup{PSL}(2,2)$ 
has a nonabelian pro-$2$ group as its kernel and the Frobenius group $S_3=C_3\rtimes C_2$ as its image.
Thus $\textup{PSL}(2,\mathbb{Z}_2)$ is a virtually pro-$2$ CA-group which is neither pro-$2$ nor virtually abelian, and it shows that both cases of \hyperref[t:virt_pro-p_intro]{Theorem~\ref{t:virt_pro-p_intro}} can occur. 

\medskip
Finally, we prove: 

\begin{customthm}{\ref{t:simple_intro}} 
Let $G$ be an infinite profinite CA-group. If $G$ is not prosoluble, then $G/O_2(G)$ is a finite almost simple group.
\end{customthm}

\begin{proof} 
In virtue of \hyperref[t:virt_pronilp_intro]{Theorem~\ref{t:virt_pronilp_intro}},
\hyperref[t:virt_abel_intro]{Theorem~\ref{t:virt_abel_intro}}, and \hyperref[p:prosoluble]{Proposition~\ref{p:prosoluble}},
it follows that $G$ is virtually pro-$2$. 
Let $M/O_2(G)$ be a minimal normal subgroup of $G/O_2(G)$.
For an odd prime $p\in\pi(M)$, let $P$ be a $p$-Sylow subgroup of $M$. 
By the Frattini argument, $G=MN_G(P)$. 
Since $P$ is finite, $N_G(P)$ has a subgroup of finite index which centralizes $P$. 
Hence $N_G(P)$ is virtually abelian. 
If $N_G(P)$ is finite, the classification of finite CA-groups shows that $N_G(P)$ is soluble. 
If $N_G(P)$ is infinite,  \hyperref[t:virt_abel_intro]{Theorem~\ref{t:virt_abel_intro}} shows that $N_G(P)$ is prosoluble. 
Thus, in all cases $N_G(P)$ is prosoluble.

Suppose that $M/O_2(G)$ is abelian. 
In that case the equality $G=MN_G(P)$ guarantees that $G$ is prosoluble. 
This is a contradiction and so $G/O_2(G)$ does not have normal soluble subgroups.

Therefore $M/O_2(G)$ is a direct product of isomorphic non-abelian simple groups. 
Suppose that $M/O_2(G)$ is not simple. 
Then, for any  odd prime $p\in\pi(M)$ the $p$-Sylow subgroup of $M$ is not cyclic. 
In view of \hyperref[v4]{Lemma~\ref{v4}} this is a contradiction. 
Hence $M/O_2(G)$ is simple. 
Taking into account that $G/O_2(G)$ does not have nontrivial normal soluble subgroups and putting this together with the fact that $G=MN_G(P)$, where $N_G(P)$ is prosoluble, 
we conclude that $M/O_2(G)$ is the unique minimal normal subgroup of $G/O_2(G)$. 
This means that $G/O_2(G)$ is almost simple.
\end{proof}

\section{Profinite CSA-groups}
\label{s:csa}

In the studies of fully residually free groups, limit groups,
and discriminating groups, 
the class of the \emph{conjugately separated abelian groups}
(\textit{i.e.}, CSA-groups) plays a fundamental role
(see \cite{MR:96} and \cite{FGRS:16}).

Observe that by \cite[Thm.~3.5]{FGRS:16} finite CSA-groups are abelian. Hence the really interesting CSA-groups are
the (infinite) non-abelian CSA-groups.

Let us prove our last result.

\begin{customthm}{\ref{t:CSA_intro}}  
Let $G$ be a profinite CSA-group. Then  one of the following holds:

\begin{enumerate} 

\item[(i)] $G$ is abelian;

\item[(ii)] G is pro-$p$;

\item[(iii)] for some prime $p$ the quotient group  $G/O_p(G)$ is  cyclic of order coprime to~$p-1$.

\end{enumerate}
\end{customthm}
\begin{proof} 
Assume that $G$ is non-abelian. 
By \cite[Thm.~3.5]{FGRS:16} finite CSA-groups are abelian, so $G$ is infinite. 
Thus \hyperref[t:virt_pronilp_intro]{Theorem~\ref{t:virt_pronilp_intro}} tells us that $G$ is either virtually abelian or virtually pro-$p$ for some prime $p$. 
It is straightforward that a CSA-group having a nontrivial normal abelian subgroup is abelian. 
Hence, $G$ is virtually pro-$p$.

Set $P=O_p(G)$. Let us show first that $G/P$ is cyclic. 
Since  a Frobenius group is not CSA, in view of \hyperref[t:virt_pro-p_intro]{Theorem~\ref{t:virt_pro-p_intro}} it is now sufficient to show that $G/P$ is abelian. 
Suppose this is false and let $G$ be a counter-example with $|G/P|$ as small as possible. 
Thus, the proper subgroups of $G/P$ are abelian and so $G$ is prosoluble.
\hyperref[t:virt_pro-p_intro]{Theorem~\ref{t:virt_pro-p_intro}} tells us that $G/P$ is a metacyclic Frobenius group. 
Choose an element $b\in G$ such that the image of $b$ in $G/P$ generates the Frobenius kernel of $G/P$. 
Because of minimality of $|G/P|$ the element $b$ has prime order $q\neq p$. 
By the Frattini argument $G=PN_G(\langle b\rangle)$. 
Now it is sufficient to show that $N_G(\langle b\rangle)$ is abelian. 
Obviously, $N_G(\langle b\rangle)$ has a nontrivial normal abelian subgroup. 
Thus, by the remark in the preceding paragraph $N_G(\langle b\rangle)$ is abelian. 
Therefore the equality $G=PN_G(\langle b\rangle)$ implies that $G/P$ is abelian, a contradiction. 
Thus $G/P$ is cyclic.

Now suppose that $G$ is not pro-$p$. We show that $(|G/P|,p-1)=1$.  
Choose a $p'$-Hall subgroup $B$ in $G$. 
We already know that $B$ is cyclic and $G=PB$. 
Suppose on the contrary there exists  a subgroup $C$ of prime order $q$ in $B$ such that $q$ divides $p-1$. 
In virtue of \hyperref[cc]{Lemma~\ref{cc}}(v)  $C$ acts nontrivially on $P/\Phi(P)$. 
It follows that the semidirect product $[P/\Phi(P),C]\rtimes C$ is a Frobenius group. 
Since $q$ divides $p-1$, we can choose a $C$-invariant cyclic subgroup $D$ in $[P/\Phi(P),C]$. 
Note that $DC$ is a Frobenius group with a cyclic kernel.  
Then by \cite[Thm.~5.1]{GH:11}  $P$ contains a $C$-invariant procyclic subgroup  $H$ such that $HC$ is a Frobenius profinite group. 
Since a CSA-group having a nontrivial normal abelian subgroup is abelian, we obtain a contradiction.
\end{proof}

\section{Open questions and concluding remarks}

We finish the paper with open questions that indicate directions of further studies. 

\hyperref[t:virt_pronilp_intro]{Theorem~\ref{t:virt_pronilp_intro}} shows that a profinite CA-group is virtually abelian or virtually pro-$p$.
Pro-$p$ groups that appear as an open subgroups of virtually pro-$p$ (but not pro-$p$) CA-groups must be more restrictive (for CSA even more restrictive).  

\begin{question} What  pro-$p$ groups can appear as open subgroups of virtually pro-$p$ \textup{(}but not pro-$p$\textup{)} CA-groups and CSA-groups?\end{question}

\hyperref[t:simple_intro]{Theorem~\ref{t:simple_intro}} shows that non-prosolubility of a profinite CA-group forces it to be virtually pro-$2$.
We do not know however an example of such a group.

\begin{question} Does there exist a profinite CA-group which is not prosoluble? If yes, which finite simple groups occur as sections of such groups?\end{question}

\hyperref[t:CSA_intro]{Theorem~\ref{t:CSA_intro}} shows that non-abelian profinite CSA-groups are pro-$p$-by-cyclic.
We however do not know an example of non-abelian non-pro-$p$ profinite CSA-group.

\begin{question} Does there exist a non-abelian profinite CSA-group which is not \mbox{pro-$p$}?\end{question}

We do not know a concrete example of a group $G$ satisfying the hypothesis of \hyperref[t:virt_abel_intro]{Theorem~\ref{t:virt_abel_intro}} with $G/N\cong\mathbf{Q}_{2^n}$ for $n\geq 4$.
Hence: 
 
\begin{question} Does there exist an example of a profinite CA-group $G$ having an abelian normal subgroup $N$ with $G/N\cong \mathbf{Q}_{2^n}$ for $n\geq 4$?\end{question}

\phantomsection
\section*{Acknowledgements}
\label{s:acknowl}
The authors thank Alexandre Zalesski for valuable advice on parts of the paper.
The authors are also grateful for financial support from CNPq and FAPDF.

\label{bibliography}

\end{document}